\numberwithin{equation}{section}
\newtheorem{theorem}{Theorem}[section]
\newtheorem{lemma}[theorem]{Lemma}
\newtheorem{remark}[theorem]{Remark}
\begin{document}
	\begin{titlepage}
          \title{\bfseries A direct imaging method for inverse scattering problem of biharmonic wave with phased and phaseless data}
		\author{Tielei Zhu\thanks{Corresponding author: zhutielei@126.com}}
		\author{Zhihao Ge}
                \affil{School of Mathematics and Statistics, Henan University, Kaifeng, Henan, 475004,  China. }
		
		\date{\today}
	\end{titlepage}
	\maketitle
	\vspace{.2in}

\begin{abstract}
This paper investigates the inverse biharmonic scattering problems of identifying the shape and location of the obstacle with phased and phaseless measurement data. A direct imaging method based on reverse time migration is proposed for reconstructing the extended obstacle with one of four types of boundary conditions on the obstacle. The newly developed imaging functions are constructed by utilizing merely one of various measurement data, including  the scattered field, its normal derivative, the bending moment, the  transverse force, its far-field and the phaseless total field. Our resolution analysis demonstrates that these imaging functions have a contrast  when sampling points are near or far from the boundary of the obstacle. Numerical experiments  are further  presented to show  the algorithm's efficiency to accurately reconstruct complex scatter geometries and its robustness to noise.

\vspace{.2in} {\bf Keywords:} Inverse biharmonic scattering problem, reverse time migration method, resolution analysis.

\vspace{.2in} {\bf AMS subject classifications}: 35Q74, 74J20
\end{abstract}

\section{Introduction}\label{sec:intro}
The inverse obstacle scattering problems, which aims to determine the shape and location of the obstacle from the phased or phaseless measurement data have received significant attention in various fields including radar, medical imaging and geophysical exploration. Over the past decades, a large amount of numerical methods have been developed for the reconstruction of obstacles, such as Newton-type method in \cite{Hohage_newton_1998}, Kirsch-Kress method in \cite{colton_inverse_2019}, linear sampling method in \cite{colton_linear_2011}, factorization method in \cite{kirsch_factorization_2008} and direct sampling method in \cite{ito_direct_2012,potthast_study_2010}.

Most existing research has focused on inverse acoustic, elastic, and electromagnetic wave scattering problems. In contrast to these inverse scattering problems associated with second-order differential operators, limited work has been done on biharmonic inverse scattering problems involving the biharmonic wave equation (a fourth-order partial differential equation). However, the biharmonic inverse scattering problems have attracted growing research interest in recent years. Bourgeois and Recoquillay \cite{bourgeois_linear_2020} established the uniqueness of Dirichlet-type (or Neumann-type) obstacles using multi-static data, namely the scattered field and its normal derivative. More recently, Dong and Li derived two uniqueness results based on phased far-field patterns and phaseless near-field data in \cite{dong_uniqueness_2024}. Regarding numerical methods for biharmonic wave inverse problems, Bourgeois and Recoquillay introduced a linear sampling method for obstacle reconstruction from multi-static near-field data. Chang and Guo \cite{chang_optimization_2023} developed an optimization method for locating obstacles using measurement data of the scattered field and its Laplacian on a measured curve. Additionally, Guo \emph{et al.} \cite{guo_direct_2025} proposed a linear sampling method employing far-field data. It is noteworthy that the aforementioned research addressed the frequency-domain problems. For the time domain problems, we refer to the recent work \cite{gao_inverse_2023}.

In this paper, we aim to develop new direct imaging method for the inverse biharmonic wave scattering problem with phased and phaseless data. Roughly speaking, the key to the direct imaging method lies in designing imaging functions that present distinct characteristics depending on whether the sampling point is near or far from the obstacle's boundary.  This method has two primary advantages over iterative methods: (1) it does not require prior geometric assumptions about the scatterer, and (2) it avoids the need to solve the direct problems numerically. Very recently, Harris \emph{et al.} proposed a direct imaging method for reconstructing the obstacle with the clamped boundary condition using far-field data in  \cite{harris_direct_2025}. However, our imaging functions are based on reverse time migration method (RTM), which has been widely applied in seismic imaging and investigated in inverse scattering for acoustic, elastic and electromagnetic waves  \cite{chen_reverse_2013,chen_reverse_2015,chen_reverse_2013b}, and depend on only one of various measurement data, such as the scattered field, its normal derivative, the bending moment, the transverse force, its far-field and the phaseless total field. It should be emphasized that previous studies about the RTM can not be extended to the case of inverse biharmonic wave scattering directly. Furthermore, we propose novel imaging functions, which are involved only with the fundamental solution of the Helmholtz equation or an acoustic plane wave rather than the fundamental solution of the biharmonic wave equation or a biharmonic plane wave apart from the measurement data, and this formulation reduces computational complexity.  We rely on new asymptotic behaviors to establish the resolution analysis of the imaging functions for the inverse scattering of the biharmonic wave, which is applicable to four types of boundary conditions.

The rest of the paper is organized as follows. In Section 2, we describe the direct and inverse scattering problems of biharmonic wave and introduce several important lemmas.  Next, we propose and analyze the imaging functions with phased and phaseless data in Section 3 and Section 4, respectively. In Section 5, numerical experiments will  be reported to show the validity of these methods.

\section{Formulation of the biharmonic wave scattering problem}
\label{sec:formulation}
\subsection{The direct scattering problem}
\label{direct}
Let $D\subset\mathbb{R}^2$ be a bounded domain with $C^3$-smooth boundary $\Gamma$ such that $D^{c}:=\mathbb{R}^2\setminus\overline{D}$ is connected.  When the obstacle $D$ is illuminated by the incident wave $u^{in}$, the scattered wave $u^{sc}$ is excited, which satisfies the biharmonic wave equation 
\begin{equation}
\label{eq:bi}
\Delta^2u^{sc}-\kappa^4u^{sc}=0 \quad\mathrm{in}\; D^{c},
\end{equation}
together with the  boundary conditions
\begin{equation}
\label{eq:boundary}
(\mathcal{B}_1u^{sc},\mathcal{B}_2u^{sc}) = 
-(\mathcal{B}_1u^{in},\mathcal{B}_2u^{in}) \quad\mathrm{on}\;\Gamma,
\end{equation}
where $\kappa>0$ is the wave number and $\mathcal{B}_1$ and $\mathcal{B}_2$ are boundary differential operators to be defined latter. 
 Moreover, the scattered wave $u^{sc}$ satisfies the Sommerfeld radiation condition
\begin{equation}
\label{eq:radiation}
\lim\limits_{r=|\bm{x}|\to\infty}\sqrt{r}\left(\frac{\partial u^{sc}}{\partial r}
  -\mathrm{i}\kappa u^{sc}\right)=0
\end{equation}
uniformly for all $\hat{\bm x}=\bm{x}/|\bm{x|}\in\mathbb{S}:=\{\bm{x}\in\mathbb{R}^2:|\bm{x}|=1\}$.
As pointed out in \cite{bourgeois_well_2020}, the Sommerfeld radiation condition for $u^{sc}$ is sufficient. Due to \eqref{eq:radiation},
the scattered wave $u^{sc}$ has the following asymptotic expansion:
\begin{equation}
  \label{eq:asy}
  u^{sc}(\bm{x}) = \dfrac{e^{\pi \mathrm{i}/4}}{\sqrt{8\pi\kappa}}
  \dfrac{e^{\mathrm{i}\kappa|\bm{x}|}}{|\bm{x}|^{1/2}}
u^{\infty}(\hat{\bm{x}}) + O\left(\dfrac{1}{|\bm{x}|^{3/2}} \right) \quad |\bm{x}|\to\infty,
\end{equation}
where $u^{\infty}$ is defined as the far-field pattern of the scattered wave $u^{sc}$.

To clarity the definitions of the boundary differential operators $\mathcal{B}_1$
and $\mathcal{B}_2$, let $\bm{ n }=(n_1,n_2)^T$ be the exterior unit normal vector of the boundary $\Gamma$. Define the boundary operators  $M_0v$ and $N_0v$ as follows:
\begin{equation*}
  M_0v = n_1^2 \dfrac{\partial^2 v }{\partial x_1^2} + 2n_1n_2 \dfrac{\partial^2v }{\partial x_1\partial x_2}
          + n_2^2 \dfrac{\partial^2 v }{\partial x_2^2}
\end{equation*}
and
\begin{equation*}
  N_0v = -\Big\{  \big( \dfrac{\partial^2 v }{\partial x_1^2}-  \dfrac{\partial^2 v }{\partial x_2^2} \big)n_1n_2  - \dfrac{\partial^2v }{\partial x_1\partial x_2}(n_1^2 - n_2^2)
          \Big\}.
\end{equation*}
Then the boundary operators $Mv$ and $Nv$ are defined as follows:
\begin{equation*}
M v = \nu \Delta v + (1-\nu) M_0 v
\end{equation*}
and
\begin{equation*}
N v = -\partial_{\bm{n} }\Delta v - (1-\nu) \partial_sN_0 v,
\end{equation*}
where $\nu\in[0,\frac{1}{2})$ is the Poisson ratio.
Here the normal and tangential derivatives $\partial_{\bm{n} }$  and $\partial_s$ are given by
\begin{equation*}
  \partial_{\bm{n} } = n_1\partial_{x_1} + n_2\partial_{x_2} \quad
  \mathrm{ and } \quad
  \partial_s=  -n_2\partial_{x_1}  + n_1\partial_{x_2}.
\end{equation*}
From a physical perspective, $Mv$ is the bending moment, whereas $Nv$ is the transverse force (consisting of the shear force and twisting moment) \cite{chen_boundary_2010}.

In this paper, we consider the following boundary conditions:
\begin{enumerate}
\item Clamped boundary condition: $\mathcal{B}_1=I$ and $\mathcal{B}_2=\partial_{\bm{n} }$
\item Simply supported boundary condition: $\mathcal{B}_1=I$ and $\mathcal{B}_2=M$
\item Roller supported boundary condition: $\mathcal{B}_1=\partial_{\bm{n}}$ and $ \mathcal{B}_2=N$
\item Free boundary condition: $\mathcal{B}_1=M$ and $ \mathcal{B}_2=N$.
 \end{enumerate}
It is important to emphasize that the aforementioned boundary conditions, rooted in plate theory, impose different requirements on the domain $D$. Specifically, the domain $D$ should be at least of class $C^2$ in the boundary conditions \uppercase\expandafter{\romannumeral 1}-\uppercase\expandafter{\romannumeral 3}, whereas the domain should be at least of class $C^3$ in the boundary condition \uppercase\expandafter{\romannumeral 4}, which is necessary in the proof of Theorem 3.4 in \cite{bourgeois_well_2020}.

The direct problem has been studied in \cite{bourgeois_well_2020}.
First, we state the well-posedness of a more general boundary value problem
  \begin{subequations}
    \label{eq:bvp}
    \begin{align}
      \label{eq:bvp-1}
    \Delta^2w - \kappa^4w &= 0 \quad \mathrm{in}\; D^{c},\\
       \label{eq:bvp-2}
    (\mathcal{B}_1w,\mathcal{B}_2w)&=(f,g)  \quad \mathrm{on}\; \Gamma,\\
        \label{eq:bvp-3}
    \lim\limits_{r\to\infty}\sqrt{r}\left(\frac{\partial w}{\partial r}
      -\mathrm{i}\kappa w \right)&=0,
    \end{align}
  \end{subequations}
 which could become  \eqref{eq:bi}-\eqref{eq:radiation} if $(f,g)=-(\mathcal{B}_1u^{in},\mathcal{B}_2 u^{in})$.
\begin{lemma}[see \cite{bourgeois_well_2020}]
  \label{lem:well}
  The boundary value problem \eqref{eq:bvp} admits a unique solution $w$ in $H^{2}_{\mathrm{ loc } }(D^c)$
  \begin{itemize}
  \item for any  $(f,g)\in H^{3/2}(\Gamma)\times H^{1/2}(\Gamma)$ and  $\kappa>0$ if $\mathcal{B}_1=I$ and $\mathcal{B}_2=\partial_{\bm{n} }$. Moreover, the the solution satisfies the following estimation:
    \begin{align*}
  \|w\|_{H^2_{\mathrm{loc}}(D^{c})} 
 & \leq C (\|f \|_{H^{3/2}(\Gamma)} + \|g \|_{H^{1/2}(\Gamma)}),\\
 \|M w\|_{H^{-1/2}(\Gamma)} + \|N w \|_{H^{-3/2}(\Gamma)}
 &\leq C (\|f \|_{H^{3/2}(\Gamma)} + \|g \|_{H^{1/2}(\Gamma)}),
\end{align*}
where $C>0$ is some constant depending on wave number $\kappa$ and the domain $D$.
    \item for any  $(f,g)\in H^{3/2}(\Gamma)\times H^{-1/2}(\Gamma)$ and  $\kappa>0$ if $\mathcal{B}_1=I$ and $\mathcal{B}_2=M$. Moreover, the the solution satisfies the following estimation:
    \begin{align*}
  \|w\|_{H^2_{\mathrm{loc}}(D^{c})} 
 & \leq C (\|f \|_{H^{3/2}(\Gamma)} + \|g \|_{H^{-1/2}(\Gamma)}),\\
 \|\partial_{\bm{n} } w\|_{H^{1/2}(\Gamma)} + \|N w \|_{H^{-3/2}(\Gamma)}
 &\leq C (\|f \|_{H^{3/2}(\Gamma)} + \|g \|_{H^{-1/2}(\Gamma)}),
    \end{align*}
    where $C>0$ is some constant depending on wave number $\kappa$ and the domain $D$.
\item for any  $(f,g)\in H^{1/2}(\Gamma)\times H^{-3/2}(\Gamma)$ and  $\kappa>0$ if $\mathcal{B}_1=\partial_{\bm{n} } $ and $\mathcal{B}_2=N$. Moreover, the the solution satisfies the following estimation:
    \begin{align*}
  \|w\|_{H^2_{\mathrm{loc}}(D^{c})} 
 & \leq C (\|f \|_{H^{1/2}(\Gamma)} + \|g \|_{H^{-3/2}(\Gamma)}),\\
 \| w\|_{H^{3/2}(\Gamma)} + \|M w \|_{H^{-1/2}(\Gamma)}
 &\leq C (\|f \|_{H^{1/2}(\Gamma)} + \|g \|_{H^{-3/2}(\Gamma)}),
    \end{align*}
    where $C>0$ is some constant depending on wave number $\kappa$ and the domain $D$.
    \item for any  $(f,g)\in H^{-1/2}(\Gamma)\times H^{-3/2}(\Gamma)$ and  $\kappa>0$ except an at most countable sequence of positive wave number $(\kappa_j)_{j\in  \mathbb{N}} \subset \mathbb{R}_{>0}$ without
finite accumulation point, if $\mathcal{B}_1= M$ and $\mathcal{B}_2=N$. Moreover, the the solution satisfies the following estimation:
    \begin{align*}
  \|w\|_{H^2_{\mathrm{loc}}(D^{c})} 
 & \leq C (\|f \|_{H^{-1/2}(\Gamma)} + \|g \|_{H^{-3/2}(\Gamma)}),\\
 \| w\|_{H^{3/2}(\Gamma)} + \|\partial_{\bm{n}} w \|_{H^{1/2}(\Gamma)}
 &\leq C (\|f \|_{H^{-1/2}(\Gamma)} + \|g \|_{H^{-3/2}(\Gamma)}),
    \end{align*}
    where $C>0$ is some constant depending on wave number $\kappa$ and the domain $D$.
    \end{itemize}
\end{lemma}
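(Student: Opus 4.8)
The plan is to combine a variational (Fredholm) formulation on a truncated domain with a Rellich-type uniqueness argument, organized around the operator factorization
\begin{equation*}
\Delta^2 - \kappa^4 = (\Delta + \kappa^2)(\Delta - \kappa^2).
\end{equation*}
This shows that every solution of \eqref{eq:bvp-1} splits as $w = w_p + w_e$, where $w_p := -\tfrac{1}{2\kappa^2}(\Delta - \kappa^2)w$ solves the Helmholtz equation $(\Delta + \kappa^2)w_p = 0$ and $w_e := \tfrac{1}{2\kappa^2}(\Delta + \kappa^2)w$ solves the modified Helmholtz equation $(\Delta - \kappa^2)w_e = 0$; the propagating part $w_p$ carries the radiating behaviour and the far-field pattern, while the evanescent part $w_e$ decays exponentially. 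Using this splitting I would first reduce the exterior problem to the bounded domain $\Omega_R := D^c \cap B_R$ (with $B_R \supset \overline{D}$) by prescribing an exact Dirichlet-to-Neumann (DtN) operator on the artificial circle $\partial B_R$: expanding $w_p$ in Hankel functions $H_n^{(1)}(\kappa r)$ and $w_e$ in modified Bessel functions $K_n(\kappa r)$ and matching the Cauchy traces $(w,\partial_{\bm n}w,\Delta w,\partial_{\bm n}\Delta w)$ yields a DtN operator encoding \eqref{eq:bvp-3}, whose diagonalization in the Fourier basis makes its mapping properties and the sign of its principal part explicit.

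Next I would set up the sesquilinear form on $H^2(\Omega_R)$ built from the plate energy
\begin{equation*}
a(u,v) = \int_{\Omega_R}\Big[ \nu\,\Delta u\,\Delta\bar v + (1-\nu)\big( \partial_{11}u\,\partial_{11}\bar v + 2\,\partial_{12}u\,\partial_{12}\bar v + \partial_{22}u\,\partial_{22}\bar v\big)\Big]\,dx,
\end{equation*}
together with the $-\kappa^4$ term, the DtN contribution on $\partial B_R$, and the boundary terms on $\Gamma$ dictated by each pair $(\mathcal{B}_1,\mathcal{B}_2)$; the essential conditions (those involving $w$ or $\partial_{\bm n}w$) are imposed in the trial space, while $Mw$ and $Nw$ enter weakly through the Green identity associated with $a(\cdot,\cdot)$. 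The key point is a G\aa rding inequality: since $\nu\in[0,\tfrac12)$, the form $a$ is coercive on $H^2$ modulo lower-order terms (a Korn-type estimate for plates), whereas the $-\kappa^4$ term and the compact part of the DtN map are relatively compact perturbations. The Fredholm alternative then reduces existence and the stability estimates to uniqueness.

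For uniqueness I would use the biharmonic Green identity
\begin{equation*}
a(w,w) = \kappa^4\int_{\Omega_R}|w|^2\,dx + \int_{\partial\Omega_R}\big( (Mw)\,\partial_{\bm n}\bar w - (Nw)\,\bar w\big)\,ds .
\end{equation*}
In each of the four cases the homogeneous conditions on $\Gamma$ annihilate both trace products there, leaving only the integral over $\partial B_R$; since $a(w,w)$ and $\int|w|^2$ are real, taking imaginary parts and letting $R\to\infty$ with the radiation condition forces, via Rellich's lemma, $w_p^\infty\equiv0$ and hence $w_p\equiv0$ in $D^c$. Then $w=w_e$ solves the modified Helmholtz equation and decays, and testing $(\Delta-\kappa^2)w_e=0$ against $\bar w_e$ over $D^c$ gives
\begin{equation*}
\int_{D^c}\big( |\nabla w_e|^2 + \kappa^2|w_e|^2\big)\,dx = -\int_\Gamma \bar w_e\,\partial_{\bm n}w_e\,ds .
\end{equation*}
For the boundary conditions I--III the surviving homogeneous condition ($w_e=0$ or $\partial_{\bm n}w_e=0$ on $\Gamma$, after $w_p$ has been removed) makes the right-hand side vanish, so $w_e\equiv0$ and $w\equiv0$.

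The hard part is twofold. First, because $\mathcal{B}_1,\mathcal{B}_2$ act on $w=w_p+w_e$ and mix the propagating and evanescent channels, I must check that the correct trace products really cancel and that the G\aa rding form has the right sign for each pair $(\mathcal{B}_1,\mathcal{B}_2)$, combining the coercivity of $a$ (which relies essentially on $\nu\in[0,\tfrac12)$) with suitable trace inequalities. Second, and most importantly, the free boundary case IV is genuinely different: both conditions $Mw=Nw=0$ are natural, so neither $w_e$ nor $\partial_{\bm n}w_e$ is controlled on $\Gamma$, the right-hand side of the evanescent energy identity need not vanish, and uniqueness may fail. To capture this I would show that the solution operator depends analytically on $\kappa$ and is invertible for at least one value (e.g.\ by a large-$\kappa$ or perturbation argument), and then invoke the analytic Fredholm theorem to obtain invertibility for all $\kappa>0$ outside a discrete set $(\kappa_j)$ without finite accumulation point --- exactly the exceptional sequence in the statement. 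The complementary trace estimates in each bullet (for instance $\|Mw\|_{H^{-1/2}(\Gamma)}+\|Nw\|_{H^{-3/2}(\Gamma)}$ in case I) then follow from boundedness of the inverse together with the continuity of the Cauchy/trace maps furnished by the Green identity for $a(\cdot,\cdot)$.
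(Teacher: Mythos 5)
This lemma is stated in the paper without proof --- it is quoted directly from Bourgeois and Hazard \cite{bourgeois_well_2020} --- and your sketch reconstructs essentially the argument of that reference: the factorization-based splitting $w=w_p+w_e$ into Helmholtz and modified-Helmholtz parts (the same decomposition $w=w^{pr}+w^{ev}$ the paper later borrows from Propositions 2.1--2.2 of \cite{bourgeois_well_2020}), DtN truncation on an artificial circle, a G\aa rding--Fredholm variational framework with the correct essential/natural splitting of the four boundary pairs (which is exactly what produces the Sobolev indices in the statement), Rellich-type uniqueness for conditions I--III, and analytic Fredholm theory yielding the exceptional discrete set of wavenumbers for the free condition IV. The one soft spot is your suggestion that the single invertible wavenumber needed for analytic Fredholm theory could come from a ``large-$\kappa$'' argument --- coercivity is not available for large real $\kappa$ in exterior scattering, and the standard fix (consistent with your analytic-in-$\kappa$ operator family) is to verify invertibility at a complex wavenumber with positive imaginary part, where the truncated problem is dissipative.
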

In this paper, to ensure the uniqueness of the scattering problem \eqref{eq:bvp}, we always assume that \emph{$\kappa$ is not in the discrete set of wave number for boundary condition \uppercase\expandafter{\romannumeral 4}.}

The fundamental solution to the biharmonic wave equation is given by
\begin{equation}
  \label{eq:funda-bi}
  G(\bm{x},\bm{y}) = \frac{1}{2\kappa^{2}}
  \left(\Phi_{\mathrm{i}\kappa}(\bm{x},\bm{y})- \Phi_{\kappa}(\bm{x},\bm{y})\right),
\end{equation}
i.e., it satisfies
\begin{equation*}
\Delta^{2} G( \bm{ x }, \bm{ y }) -\kappa^4 G( \bm{ x },\bm{ y }) = -\delta(\bm{ x }-\bm{ y }).
\end{equation*}
Here $\Phi_{\kappa}(\bm{ x },\bm{ y })=\frac{\mathrm{ i }}{4} H^{(1)}_0(\kappa | \bm{ x }-\bm{ y } |)$ is the fundamental solution of two-dimensional Helmholtz equation and $H^{(1)}_0$ is the Hankel function of the first kind with the order zero.

Let us introduce the Green formula (see \cite{hsiao_bounday_2021}). Assume that a bounded domain $\Omega$ is $C^2$-smooth with $\bm{ n }$ the outward unit normal.
For $v\in H^2(\Omega,\Delta^2):=\{v \in H^2(\Omega):\Delta^2v \in L^{2}(\Omega)\}$ and $w \in H^2(\Omega)$, we can have
\begin{equation} \label{eq:green-1}
\int_{ \Omega } (\Delta^{2} v) w \;\mathrm{d}\bm{x} = a_{\Omega}(v,w) - \int_{ \partial \Omega } \left[(Mv)\partial_{\bm{n} } w + (Nv) w \right] \;\mathrm{d}s,
\end{equation}
where
\begin{equation*}
  a_{ \Omega}(v,w)= \nu \int_{ \Omega} \Delta v \Delta w \;\mathrm{d} \bm{ x} +
  (1-\nu)\int_{ \Omega} \left( \dfrac{\partial^2 v}{\partial x_1^2}\dfrac{\partial^2 w}{\partial x_1^2}
    + 2 \dfrac{\partial^2 v}{\partial x_1 \partial x_2 }\dfrac{\partial^2 w}{\partial x_1\partial x_2  }
    +\dfrac{\partial^2 v}{\partial x_2^2}\dfrac{\partial^2 w}{\partial x_2^2}
     \right) \;\mathrm{d}\bm{ x}.
\end{equation*}
Similarly, for $v,w \in H^2(\Omega,\Delta^2)$ we could obtain
\begin{equation} \label{eq:green-2}
  \int_{ \Omega }\Big[ w (\Delta^{2} v) -  v(\Delta^{2} w)\Big] \;\mathrm{d}\bm{x} =   \int_{ \partial \Omega }\Big\{
   \left[(Mw)\partial_{\bm{n} } v + (Nw) v \right]
  - \left[(Mv)\partial_{\bm{n} } w + (Nv) w \right]\Big\} \;\mathrm{d}s.
\end{equation}

Based on the fundamental solution $G(\bm{ x },\bm{ y })$ and Green formula, we can get the representations of the scattered field and its far-field pattern in the following lemma, which is similar to Proposition 1 in \cite{bourgeois_linear_2020} and Theorem 3.2 in \cite{dong_uniqueness_2024}. For the sake of completeness, we provide a proof.
\begin{lemma}
  \label{lem:rep}
  Assume that $w\in H^2_{\mathrm{loc}}(D^{c})$ is the radiation solution, i.e., it satisfies \eqref{eq:bi}
  and \eqref{eq:radiation}, and $w^{\infty}$ is far-field pattern of $w$. Then
  \begin{align}
   \notag
    w(\bm{x})  =&
                \int_{\Gamma}\Big\{
                  \big[ G(\bm{x},\bm{y}) N w(\bm{y})
                  + [\partial_{\bm{n}(\bm{y})}G(\bm{x},\bm{y})] M w(\bm{y}) \big]\\
    \label{eq:Green-rep}
              &\qquad\quad  - \big[ [N_{\bm{y}} G(\bm{x},\bm{y})]    w(\bm{y})
                    + [M_{\bm{y}} G(\bm{x},\bm{y})]   \partial_{\bm{n}} w(\bm{y}) \big]
                \Big\} \;\mathrm{d}s(\bm{y}) \quad \bm{x}\in D^{c},  \\   \notag
    w^{\infty}(\hat{\bm x}) = & -\frac{1}{2\kappa^{2}}
                \int_{\Gamma}\Big\{
                  [  e^{-\mathrm{i}\kappa\hat{\bm x}\cdot \bm{y}}N w(\bm{y})
                           + (\partial_{\bm{n}(\bm{y})}e^{-\mathrm{i}\kappa\hat{\bm x}\cdot \bm{y}})  M w(\bm{y}) ] \\
    \label{eq:far-rep}
             & \qquad\qquad\quad   - [ (N_{\bm{y}}e^{-\mathrm{i}\kappa\hat{\bm x}\cdot \bm{y}})    w(\bm{y})
                    + (M_{\bm{y}}e^{-\mathrm{i}\kappa\hat{\bm x}\cdot \bm{y}})  \partial_{\bm{n}} w(\bm{y}) ]
                \Big\} \;\mathrm{d}s(\bm{y}) \quad \hat{\bm x}\in \mathbb{S}.
  \end{align}
\end{lemma}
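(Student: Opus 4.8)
The plan is to derive \eqref{eq:Green-rep} from the Green identity \eqref{eq:green-2} and then obtain \eqref{eq:far-rep} by inserting the large-argument asymptotics of the fundamental solution; the argument is the fourth-order analogue of the classical Green representation for the Helmholtz equation. Fix $\bm{x}\in D^{c}$ and apply \eqref{eq:green-2} with $u_1=w$ and $u_2=G(\bm{x},\cdot)$ on the truncated, punctured exterior domain $\Omega_{R,\varepsilon}:=(B_R\setminus\overline{D})\setminus\overline{B_\varepsilon(\bm{x})}$, where $B_R$ is a large ball containing $\overline{D}$ and $B_\varepsilon(\bm{x})$ a small ball about the singularity. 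On $\Omega_{R,\varepsilon}$ both $w$ and $G(\bm{x},\cdot)$ are smooth solutions of $\Delta^2 u-\kappa^4u=0$, so using $\Delta^2 w=\kappa^4 w$ and $\Delta^2 G=\kappa^4 G$ the left-hand side of \eqref{eq:green-2} vanishes identically. Hence the boundary integral over $\partial\Omega_{R,\varepsilon}=\Gamma\cup\partial B_R\cup\partial B_\varepsilon(\bm{x})$ is zero, and it remains to pass to the limits $\varepsilon\to0$ and $R\to\infty$.

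For the contribution of $\partial B_\varepsilon(\bm{x})$ I would use that $(\Delta^2-\kappa^4)G(\bm{x},\cdot)=-\delta(\cdot-\bm{x})$ from \eqref{eq:funda-bi}: a direct computation (equivalently, the distributional form of \eqref{eq:green-2}) shows this piece converges to $-w(\bm{x})$ as $\varepsilon\to0$, the cancellation of the logarithmic parts of $\Phi_{\mathrm{i}\kappa}$ and $\Phi_\kappa$ making $G$ itself bounded at $\bm{x}$ while its higher traces carry the singularity. On $\Gamma$ the outward normal of $\Omega_{R,\varepsilon}$ is $-\bm{n}$, so I must rewrite the operators accordingly. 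Since $M_0$ and $N_0$ depend on $\bm{n}$ only through the even quantities $n_in_j$ and $n_1^2-n_2^2$, while $\partial_{\bm{n}}$ and $\partial_s$ are odd in $\bm{n}$, one checks the parities $\partial_{\bm{n}}\mapsto-\partial_{\bm{n}}$, $M\mapsto M$ and $N\mapsto-N$ under $\bm{n}\mapsto-\bm{n}$. Substituting these into the $\Gamma$-integrand and matching the four resulting products against the right-hand side of \eqref{eq:Green-rep} reproduces exactly the stated formula, with $M_{\bm{y}}G$, $N_{\bm{y}}G$ and $\partial_{\bm{n}(\bm{y})}G$ denoting the operators acting in the integration variable $\bm{y}$.

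The main obstacle is the vanishing of the integral over $\partial B_R$ as $R\to\infty$, i.e. the radiation argument adapted to the fourth-order operator. Here I would exploit the splitting $G=\frac{1}{2\kappa^{2}}(\Phi_{\mathrm{i}\kappa}-\Phi_\kappa)$: since $\Phi_{\mathrm{i}\kappa}$ and all its derivatives decay exponentially as $|\bm{x}|\to\infty$, only the radiating Helmholtz part $\Phi_\kappa$ can contribute on $\partial B_R$. Writing $w=w_{1}+w_{2}$ with $(\Delta+\kappa^2)w_1=0$ and $(\Delta-\kappa^2)w_2=0$, the evanescent part $w_2$ and its derivatives decay exponentially (the growing mode being excluded by \eqref{eq:radiation} within $H^2_{\mathrm{loc}}$), while the cross terms between the outgoing Helmholtz fields $\Phi_\kappa$ and $w_1$ cancel to leading order by the standard Sommerfeld identity, using \eqref{eq:radiation} together with the induced asymptotics of the normal, bending-moment and transverse-force traces. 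This forces $\int_{\partial B_R}\to0$, and combining the three limits yields $w(\bm{x})=\int_\Gamma\{\cdots\}\,\mathrm{d}s(\bm{y})$, which is \eqref{eq:Green-rep}.

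Finally, to obtain \eqref{eq:far-rep} I would insert the large-$|\bm{x}|$ behaviour of $G(\bm{x},\bm{y})$ into \eqref{eq:Green-rep}. The exponentially small $\Phi_{\mathrm{i}\kappa}$ term drops, while the $\Phi_\kappa$ term gives, by \eqref{eq:asy} applied to $-\frac{1}{2\kappa^2}\Phi_\kappa$,
\begin{equation*}
G(\bm{x},\bm{y})=\frac{e^{\pi\mathrm{i}/4}}{\sqrt{8\pi\kappa}}\,\frac{e^{\mathrm{i}\kappa|\bm{x}|}}{|\bm{x}|^{1/2}}\left(-\frac{1}{2\kappa^{2}}e^{-\mathrm{i}\kappa\hat{\bm{x}}\cdot\bm{y}}\right)+O\left(|\bm{x}|^{-3/2}\right),\quad |\bm{x}|\to\infty.
\end{equation*}
Because $M_{\bm{y}}$, $N_{\bm{y}}$ and $\partial_{\bm{n}(\bm{y})}$ act only in $\bm{y}$, they commute with this expansion in $\bm{x}$ and may be applied termwise to the factor $-\frac{1}{2\kappa^2}e^{-\mathrm{i}\kappa\hat{\bm{x}}\cdot\bm{y}}$. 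Reading off the coefficient of $\frac{e^{\pi\mathrm{i}/4}}{\sqrt{8\pi\kappa}}\,e^{\mathrm{i}\kappa|\bm{x}|}|\bm{x}|^{-1/2}$ in the resulting boundary integral, as dictated by the definition of the far-field pattern in \eqref{eq:asy}, produces precisely \eqref{eq:far-rep}.
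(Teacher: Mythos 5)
Your proposal is correct and follows essentially the same route as the paper: Green's identity \eqref{eq:green-2} on the truncated punctured domain, the small-ball limit producing $-w(\bm{x})$ from the singular trace of $G$, the decomposition of $w$ into propagating and evanescent parts (Propositions 2.1--2.2 of \cite{bourgeois_well_2020}) with the $\partial B_R$ term killed by the Sommerfeld pairing, and the far-field formula read off from the Hankel asymptotics of $\Phi_\kappa$ with the $\Phi_{\mathrm{i}\kappa}$ part exponentially negligible. The only detail you defer that the paper carries out explicitly is the verification, via the polar forms \eqref{eq:Mpolar}--\eqref{eq:Npolar}, that $Mw=-\kappa^2 w^{pr}+O(r^{-3/2})$ and $Nw=\kappa^{2}\partial_r w^{pr}+O(r^{-5/2})$, which reduces the fourth-order boundary pairing to the standard Helmholtz one.
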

\begin{proof}
  Let $\bm{ x }\in D^c$ and choose a sufficiently large $r>0$ and a sufficiently small $\rho>0$ such that $\overline{D}\subset B(\bm{ 0 },r)$ and $\overline{B(\bm{ x },\rho)}\subset B(\bm{ 0 },r)\setminus \overline{D}$, where
  the unit normal $\bm{ n }$ of $\partial B(\bm{ 0 },r)$  is directed toward the outside of the ball  $B(\bm{ 0 },r)$
  and  the unit normal $\bm{ n }$ of  $\partial B(\bm{ x},\rho)$   is directed toward the inside of the ball  $B(\bm{ x },\rho)$.  
    We apply the Green formula \eqref{eq:green-2} to $w$ and $G(\bm{ x },\cdot)$  in $D_{\rho,r}:=B(\bm{ 0 },r)\setminus (\overline{D}\cup \overline{B(\bm{ x },\rho)})$ to obtain 
  \begin{align} \notag
    0 = & \int_{ D_{\rho,r}  } [G(\bm{ x }, \bm{ y }) \Delta^2 w(\bm{ y })
          - w (\bm{ y })  \Delta^2_{\bm{ y }}  G(\bm{ x }, \bm{ y })]
          \;\mathrm{d}\bm{ y } 
    =  \int_{ \Gamma \cup \partial B(\bm{ x },\rho)\cup \partial B(\bm{ 0 },r) }
        \mathcal{U}(\bm{ x },\bm{ y }) \;\mathrm{d}s(\bm{ y })\\
    \label{eq:greendr}
    = & \left( -\int_{ \Gamma} +\int_{ \partial B(\bm{ x },\rho)} + \int_{\partial B(\bm{ 0 },r) }\right)
        \mathcal{U}(\bm{ x },\bm{ y }) \;\mathrm{d}s(\bm{ y }),
  \end{align}
  where $$\mathcal{U}(\bm{ x },\bm{ y })
  =\left[ (M_{\bm{ y }}G(\bm{ x }, \bm{ y }))\partial_{\bm{n}(\bm{ y }) } w(\bm{ y })
    + (N_{\bm{ y }}G(\bm{ x }, \bm{ y }) ) w(\bm{ y }) \right]
  - \left[\partial_{\bm{n}(\bm{ y }) } G(\bm{ x }, \bm{ y }) (M_{\bm{ y }}w(\bm{ y }))  +  G(\bm{ x }, \bm{ y }) (N_{\bm{ y }}w(\bm{ y })) \right].$$
  It follows from Propositions 2.1 and 2.2 in \cite{bourgeois_well_2020} that the radiation solution $w$ has the decomposition $w=w^{pr}+w^{ev}$ and their asymptotic behaviors are
  \begin{equation*}
    |w^{pr}(\bm{ x })|=O(| \bm{ x } | ^{-1/2}),\quad
    |\partial_rw^{pr}-\mathrm{ i } \kappa w^{pr}|=O(| \bm{ x } |^{-3/2})\quad
    \mathrm{ and } \quad | w^{ev}(\bm{ x }) | = O(e^{-\kappa | \bm{ x } |}) \quad  \mathrm{as}\quad | \bm{ x } |\to \infty,
  \end{equation*}
  uniformly with respect to $\theta \in [0,2\pi)$. 
  Note that the boundary operators $M$ and $N$ in the polar coordinates can be rewritten as follows:
  \begin{align}
    \label{eq:Mpolar}
  Mw  = & \Delta w - (1-\nu)\left( \frac{1}{r} \dfrac{\partial w}{\partial r} + \frac{1}{r^{2}} \dfrac{\partial^2 w}{\partial \theta^2}\right),   \\
    \label{eq:Npolar}
 N w  = & -\dfrac{\partial }{\partial r}\Delta w - (1-\nu) \frac{1}{r^{2}} \dfrac{\partial }{\partial \theta} \left(\dfrac{\partial^2 w}{\partial r \partial \theta} - \frac{1}{r} \dfrac{\partial w}{\partial \theta}\right).
  \end{align}
  Then we have
  \begin{equation}\label{eq:MNw}
    M w(\bm{ x }) = -\kappa^2 w^{pr}(\bm{ x }) + O(r^{-3/2}) \quad \mathrm{ and } \quad
    N w(\bm{ x }) = \kappa^{2}\partial_r  w^{pr}(\bm{ x }) + O(r^{-5/2}) \quad  \mathrm{as}\quad r=| \bm{ x } | \to \infty,
  \end{equation}
  where we use the fact that $\Delta w^{pr}+ \kappa^2 w^{pr}=0$ in $D^c$. Similarly,
  the asymptotic behaviors of the Hankel function \cite[Chapter 7, section 4.1]{olver_asymptotic_1974} as $z\to\infty$, i.e.,
\begin{equation}\label{eq:Hankel}
H^{(1)}_{n}(z ) = \sqrt{\frac{2}{\pi z}}e^{\mathrm{ i } (z-n\pi/2-\pi/4)}\left(1+O(|z|^{-1})\right) \quad \mathrm{ for } \quad\mathrm{arg}\, z \in [-\frac{\pi}{2}, \frac{\pi}{2}] \quad \mathrm{ and } \quad n=0,1,\cdots, 
\end{equation}
and the recurrence relation
\begin{equation*}
(H^{(1)}_m)'(z)= H^{(1)}_{m-1}(z)-\frac{m}{z} H^{(1)}_m(z)
\end{equation*}
  imply that
  \begin{equation}\label{eq:MG}
    M_{\bm{ y }}  G(\bm{ x }, \bm{ y })  = -\kappa^2 G^{pr}(\bm{ x }, \bm{ y })  + O(| \bm{ y } | ^{-3/2})  \quad  \mathrm{as}\quad | \bm{ y } | \to \infty,
  \end{equation}
  and
  \begin{equation}
    \label{eq:NG}
    N_{\bm{ y }}  G(\bm{ x }, \bm{ y })  = \kappa^{2}\partial_{| \bm{ y } | }  G^{pr}(\bm{ x }, \bm{ y }) + O(| \bm{ y } | ^{-5/2}) \quad  \mathrm{as}\quad | \bm{ y } | \to \infty,
  \end{equation}
where $G^{pr}(\bm{ x }, \bm{ y }) = -1/(2 \kappa^2)\Phi_{\kappa}(\bm{ x }, \bm{ y })$.
Letting $r\to \infty$ and combining \eqref{eq:MNw}-\eqref{eq:NG}, we have
\begin{align}
  \notag
  &\lim\limits_{r\to\infty} \int_{\partial B(\bm{ 0 },r) }  \mathcal{U}(\bm{ x },\bm{ y })
    \;\mathrm{d}s(\bm{ y })\\
\notag
  = & 2\kappa^2 \lim\limits_{r\to\infty}  \int_{\partial B(\bm{ 0 },r) } \Big\{ 
      -G^{pr}(\bm{ x }, \bm{ y })\left[ \partial_{\bm{n}(\bm{ y }) } w^{pr}(\bm{ y }) - \mathrm{ i } \kappa w^{pr}(\bm{ y}) \right]\\
  &\qquad \qquad \qquad  + w^{pr}(\bm{ y }) \left[\partial_{\bm{n}(\bm{ y }) }G^{pr}(\bm{ x }, \bm{ y }) -\mathrm{ i } \kappa G^{pr}(\bm{ x }, \bm{ y })   \right]
    \Big\} \;\mathrm{d}s(\bm{ y }) =0,
\label{eq:greendr-1}
\end{align}
since that $\Phi_{\kappa}(\bm{ x }, \bm{ y })$ satisfies the Sommerfeld radiation condition \eqref{eq:radiation}.
Observe that for $\bm{ y }\in \partial B(\bm{ x },\rho)$ the following formulas hold:
  \begin{align*}
  \partial_{\bm{n}(\bm{ y })} G(\bm{ x },\bm{ y })  = & -\frac{\mathrm{ i } }{8 \kappa} [\mathrm{ i } H^{(1)}_1(\mathrm{ i } \kappa \rho) -  H^{(1)}_1(\kappa \rho)], \\
   M_{\bm{ y }} G(\bm{ x },\bm{ y }) = & \frac{\mathrm{ i } }{8} [H^{(1)}_0(\mathrm{ i } \kappa \rho) +  H^{(1)}_0(\kappa \rho)] + (1-\nu) \frac{1}{\rho} \partial_{\bm{n}(\bm{ y })} G(\bm{ x },\bm{ y }),  \\
    N_{\bm{ y }} G(\bm{ x },\bm{ y }) = & \frac{\mathrm{ i } }{8} [  \dfrac{\partial }{\partial \rho} H^{(1)}_0(\mathrm{ i } \kappa \rho)+\dfrac{\partial }{\partial \rho} H^{(1)}_0(\kappa \rho)] . 
  \end{align*}
  Using the asymptotic behaviors of Hankel functions as $\rho\to 0$, we obtain
  \begin{align}
    \label{eq:greendr-2}
    \lim\limits_{\rho\to 0} \int_{\partial B(\bm{ x },\rho) }  \mathcal{U}(\bm{ x },\bm{ y })
      \;\mathrm{d}s(\bm{ y })
    = \lim\limits_{\rho\to 0} \int_{\partial B(\bm{ x},\rho) } (N_{\bm{ y }}G(\bm{ x }, \bm{ y }) ) w(\bm{ y })  \;\mathrm{d}s(\bm{ y }) = -w(\bm{ x }).
  \end{align}
The Green representation \eqref{eq:Green-rep} follows by substituting \eqref{eq:greendr-1} and \eqref{eq:greendr-2} into \eqref{eq:greendr}. By the representation \eqref{eq:Green-rep} we derive \eqref{eq:far-rep} due to the definition of the far-field pattern and the asymptotic behaviors of the Hankel functions.
 \end{proof}
Moreover, we have the following property of the radiation solution, which is corresponding to the sign property of the imaginary part of the Dirichlet-to-Neumann mapping in the acoustic and elastic wave scattering problems (c.f. \cite[Lemma 3.3]{chen_reverse_2013} and \cite[Lemma 3.5]{chen_reverse_2015})  and has been proved essentially in Proposition 3.2 of \cite{bourgeois_well_2020}. For the convenience of the readers, we provide the details here.
\begin{lemma}
  \label{lem:sign}
  Let $w$ satisfy \eqref{eq:bi} and \eqref{eq:radiation} and $w^{\infty}$ be the far-field pattern of $w$. Then
  \begin{equation*}
    \mathrm{Im} \int_{\Gamma}[\overline{w}\, N w +  \partial_{\bm{n}}\overline{w}\, M w] \;\mathrm{d}s
    =   \mathrm{Im} \int_{\Gamma}[\overline{w}\, N w -M  \overline{w}\,  \partial_{\bm{n}}w] \;\mathrm{d}s
    = \frac{\kappa^{2}}{4\pi}\int_{\mathbb{S}}|w^{\infty}|^{2}\mathrm{d}s.
  \end{equation*}
\end{lemma}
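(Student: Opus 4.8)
The plan is to reduce the boundary integral over $\Gamma$ to an integral over a large circle by means of Green's second formula, and then to evaluate the latter in the far field. Fix $r>0$ large enough that $\overline{D}\subset B(\bm{0},r)$ and set $\Omega_r:=B(\bm{0},r)\setminus\overline{D}$. Since both $w$ and $\overline{w}$ solve $\Delta^2 v-\kappa^4 v=0$ with $\kappa$ real, applying the Green formula \eqref{eq:green-2} with $v=w$ and test function $\overline{w}$ annihilates the volume term, leaving $\int_{\partial\Omega_r}\{[(M\overline{w})\partial_{\bm{n}}w+(N\overline{w})w]-[(Mw)\partial_{\bm{n}}\overline{w}+(Nw)\overline{w}]\}\,\mathrm{d}s=0$. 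Because $M,N,\partial_{\bm{n}}$ are real differential operators, the first bracket is the complex conjugate of the second, so the integrand equals $-2\mathrm{i}\,\mathrm{Im}[\overline{w}\,Nw+\partial_{\bm{n}}\overline{w}\,Mw]$; hence $\mathrm{Im}\int_{\partial\Omega_r}[\overline{w}\,Nw+\partial_{\bm{n}}\overline{w}\,Mw]\,\mathrm{d}s=0$.

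Next I would split $\partial\Omega_r=\Gamma\cup\partial B(\bm{0},r)$, noting that the outward normal of $\Omega_r$ on $\Gamma$ is $-\bm{n}$. A direct check of the operator definitions shows that under $\bm{n}\mapsto-\bm{n}$ one has $M\mapsto M$, $\partial_{\bm{n}}\mapsto-\partial_{\bm{n}}$ and $N\mapsto-N$ (the products $n_in_j$ and $n_1^2-n_2^2$ are invariant, whereas $\partial_{\bm{n}}$ and $\partial_s$ change sign). Consequently the $\Gamma$-contribution, written through the exterior normal $\bm{n}$ of $D$, picks up an overall minus sign, so the vanishing of the total integral gives $\mathrm{Im}\int_{\Gamma}[\overline{w}\,Nw+\partial_{\bm{n}}\overline{w}\,Mw]\,\mathrm{d}s=\mathrm{Im}\int_{\partial B(\bm{0},r)}[\overline{w}\,Nw+\partial_{\bm{n}}\overline{w}\,Mw]\,\mathrm{d}s$ for every large $r$. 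The equality of the first two expressions in the statement is then immediate: their difference has integrand $\partial_{\bm{n}}\overline{w}\,Mw+M\overline{w}\,\partial_{\bm{n}}w=\partial_{\bm{n}}\overline{w}\,Mw+\overline{\partial_{\bm{n}}\overline{w}\,Mw}=2\mathrm{Re}(\partial_{\bm{n}}\overline{w}\,Mw)$, which is real and thus contributes nothing to the imaginary part.

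It remains to pass to the limit $r\to\infty$ on the circle $\partial B(\bm{0},r)$, which is the main step. Using the decomposition $w=w^{pr}+w^{ev}$ from Propositions 2.1 and 2.2 of \cite{bourgeois_well_2020} together with the asymptotics \eqref{eq:MNw}, namely $Mw=-\kappa^2w^{pr}+O(r^{-3/2})$ and $Nw=\kappa^2\partial_r w^{pr}+O(r^{-5/2})$, and the exponential smallness of $w^{ev}$, the integrand reduces to leading order to $\kappa^2[\overline{w^{pr}}\,\partial_r w^{pr}-w^{pr}\,\partial_r\overline{w^{pr}}]=2\mathrm{i}\kappa^2\,\mathrm{Im}(\overline{w^{pr}}\,\partial_r w^{pr})$, the remaining products being $O(r^{-2})$ on $\partial B(\bm{0},r)$ and hence negligible after multiplication by the arc-length element. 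Since $w^{ev}$ decays exponentially, $w^{pr}$ inherits the far-field expansion \eqref{eq:asy}, so $w^{pr}\sim \frac{e^{\pi\mathrm{i}/4}}{\sqrt{8\pi\kappa}}\,\frac{e^{\mathrm{i}\kappa r}}{r^{1/2}}\,w^{\infty}(\hat{\bm x})$ and $\partial_r w^{pr}\sim \mathrm{i}\kappa\,w^{pr}$. Substituting gives $\mathrm{Im}(\overline{w^{pr}}\,\partial_r w^{pr})\sim \frac{1}{8\pi}\,\frac{|w^{\infty}|^2}{r}$, and integrating $2\kappa^2$ times this against $r\,\mathrm{d}\theta$ over the circle produces exactly $\frac{\kappa^2}{4\pi}\int_{\mathbb{S}}|w^{\infty}|^2\,\mathrm{d}s$, which survives the limit $r\to\infty$.

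The hard part is this last paragraph: one must verify that every cross term between $w^{pr}$ and $w^{ev}$, as well as the remainders in the expansions of $Mw$, $Nw$ and $\partial_r w^{pr}$, genuinely vanishes in the limit, and one must track the constants carefully, in particular $|e^{\pi\mathrm{i}/4}/\sqrt{8\pi\kappa}|^2=1/(8\pi\kappa)$ and the factor $2$ arising from $\overline{w^{pr}}\partial_r w^{pr}-w^{pr}\partial_r\overline{w^{pr}}$, so that the normalization $\kappa^2/(4\pi)$ emerges correctly. The orientation bookkeeping on $\Gamma$ is a secondary but easy-to-mishandle point.
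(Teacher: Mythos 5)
Your proof is correct and follows essentially the same route as the paper: a Green identity on $B(\bm{0},r)\setminus\overline{D}$ (you use the second identity \eqref{eq:green-2} applied to the pair $(w,\overline{w})$, while the paper uses the first identity \eqref{eq:green-1}, which is equivalent here since the energy terms $a_{D_r}(w,\overline{w})-\kappa^4\int_{D_r}|w|^2$ are real) reduces the $\Gamma$-integral to the large circle $\partial B(\bm{0},r)$, where the asymptotics of $Mw$ and $Nw$ from \eqref{eq:MNw} (the paper states them directly in terms of $w^{\infty}$ via \eqref{eq:asy}, \eqref{eq:Mpolar}, \eqref{eq:Npolar}) produce exactly $\frac{\kappa^{2}}{4\pi}\int_{\mathbb{S}}|w^{\infty}|^{2}\,\mathrm{d}s$. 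Your explicit orientation bookkeeping ($M\mapsto M$, $N\mapsto -N$, $\partial_{\bm{n}}\mapsto -\partial_{\bm{n}}$ under $\bm{n}\mapsto-\bm{n}$), the realness argument identifying the two boundary expressions, and the constant tracking all check out against the paper's computation $\overline{w}\,Nw+Mw\,\partial_{r}\overline{w}=\frac{\mathrm{i}\kappa^{2}}{4\pi}|w^{\infty}|^{2}+O(|\bm{x}|^{-2})$.
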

\begin{proof}
  It suffices to prove
\begin{equation*}
    \mathrm{Im} \int_{\Gamma}[\overline{w}\, N w +  \partial_{\bm{n}}\overline{w}\, M w] \;\mathrm{d}s
    = \frac{\kappa^{2}}{4\pi}\int_{\mathbb{S}}|w^{\infty}|^{2}\mathrm{d}s,
  \end{equation*}
  due to the fact that $\mathrm{Im}\,(z_1+z_2)=\mathrm{Im}\, (z_1-\overline{z_{2}})$ for any $z_1$, $z_2 \in \mathbb{C}$.
  Let  $B(\bm{ 0 },r)$ denote an open disk  centered at the origin with radius  $r$  such that the disk $B(\bm{ 0 },r)$ contains $\overline{D}$.
  By the Green formula \eqref{eq:green-1} in $B_r\setminus \overline{D}$, we have
 \begin{align*}  
   \left( \int_{\partial B(\bm{ 0 },r)}- \int_{\Gamma}\right)
   [\overline{w}\, N w + \partial_{\bm{n}}\overline{w} \, M w]\;\mathrm{d}s
   = a_{D_r}(w,\overline{w}) - \kappa^4  \int_{D_r}| w | ^2\;\mathrm{d} \bm{ x }
\end{align*}
  where $D_r:=B(\bm{ 0 },r)\setminus \overline{D}$, which implies 
  \begin{align}  \label{eq:lem-1}
   \mathrm{Im}\, \int_{\Gamma}[\overline{w}\, N w + \partial_{\bm{n}}\overline{w} \, M w]\;\mathrm{d}s
     =\mathrm{Im}\, \int_{\partial B(\bm{ 0 },r)}[\overline{w}\, N w +\partial_{r}\overline{w} \,   M w]\;\mathrm{d}s 
  \end{align}
From the asymptotic behavior of $w$, i.e., \eqref{eq:asy} with $u^{\infty}$ (or $u^{sc}$) replaced by $w^{\infty}$ (or $w$),
together with the identities \eqref{eq:Mpolar} and \eqref{eq:Npolar},
we can obtain the asymptotic behaviors of $M w$ and $N w$ as
follows:
\begin{align*}
  M w \left( \bm{ x } \right)
  &= -\kappa^2  \dfrac{e^{\mathrm{i} \pi/4   }}{\sqrt{8\pi \kappa}}
        \dfrac{e^{\mathrm{i}\kappa | \bm{x} | }}{| \bm{ x }  |^{1/2} } w^{\infty} \left( \hat{\bm{ x }} \right)
        + O \left( | \bm{ x } |^{-3/2}   \right) \quad | \bm{ x }  |\to \infty, \\
  N w \left( \bm{ x } \right) =& = \mathrm{ i } \kappa^3  \dfrac{e^{\mathrm{i} \pi/4   }}{\sqrt{8\pi \kappa}}
                                 \dfrac{e^{\mathrm{i}\kappa | \bm{x} | }}{| \bm{ x }  |^{1/2} } w^{\infty} \left( \hat{\bm{ x }} \right)
                                  + O \left( | \bm{ x } |^{-3/2}   \right) \quad | \bm{ x }  |\to \infty.
\end{align*}
Straightforward calculations shows that
\begin{align*}
  \overline{w}\left(\bm{ x } \right) N w\left(\bm{ x  }\right)
  + M w\left(\bm{ x } \right) \partial_{r}\overline{w\left(\bm{ x } \right)} =
  \frac{\mathrm{ i }  \kappa^{2}}{4\pi} |w^{\infty} \left( \hat{\bm{ x }} \right) |^2
  + O \left( | \bm{ x } |^{-2}\right) \quad  | \bm{ x }  |\to \infty.
\end{align*}
Taking the imaginary part of \eqref{eq:lem-1} and the limits $r=| \bm{ x } | \to \infty$ yield the conclusion.
\end{proof}

\subsection{The inverse scattering problems for extended obstacles}

Let $B_r$ (or $B_s$) be the open disk with center at the origin and radius $R_r>0$ (or $R_s>0$)  such that $\overline{D}\subset B_r$ (or $\overline{D}\subset B_s$), and we denote by $\Gamma_r$ (or $\Gamma_s$) the boundary of  $B_r$ (or $B_s$) with $\bm{ n }_r$ being the outward unit normal of $\Gamma_r$. Let $\Omega$ be the sampling domain containing the closure  $\overline{D}$ of the obstacle $D$. We assume that $\overline{\Omega}$ is entirely contained within either $B_r$ (or $B_s$), i.e., $\overline{\Omega}\subset B_r$ (or $B_s$).

Consider the plane wave $u^{in}(\bm{ x }, \bm{ d })=e^{\mathrm{ i } \kappa \bm{ x }\cdot \bm{ d }}$ as the incident wave, where $\bm{ d }\in \mathbb{S}$ is the incident direction, and we denote by $u^{sc}(\bm{ x }, \bm{ d })$ and $u^{\infty}(\hat{\bm  x }, \bm{ d })$ the scattered field and far-field pattern respectively. Besides, we also consider the point source located on $\Gamma_s$ as the incident wave. Let  $u^{in}(\bm{ x },\bm{ x }_s)=\Phi_{\kappa}(\bm{ x },\bm{ x }_s)$ ($\bm{ x }_s \in \Gamma_{s}$) and then  $u^{sc}(\bm{ x }, \bm{ x }_s)$ and $u^{\infty}(\hat{\bm  x }, \bm{ x }_s)$ are the scattered field and far-field pattern respectively.
 Define $u(\bm{ x }, \bm{ x }_s):= u^{sc}(\bm{ x }, \bm{ x }_s)+u^{in}(\bm{ x }, \bm{ x }_s)$ as the total field.

In the remainder of  this  paper,  we  are  interested in the 
following eleven inverse biharmonic scattering problems.
\begin{itemize}
\item \textbf{IP-1}: Determine the shape and location of  the obstacle $D$
  from
  $u^{sc}(\bm{ x }_r, \bm{ x }_s)$
  for all observation points $\bm{ x }_r \in \Gamma_r$ and all source points $\bm{ x }_s \in \Gamma_s$.
  
\item \textbf{IP-2}: Determine the shape and location of  the obstacle $D$
  from $\partial_{\bm{n}(\bm{ x }_{r}) } u^{sc}(\bm{ x }_r, \bm{ x }_s)$
  for all observation points $\bm{ x }_r \in \Gamma_r$ and all source points $\bm{ x }_s \in \Gamma_s$.
  
  \item \textbf{IP-3}: Determine the shape and location of the obstacle $D$
    from 
    $M_{\bm{ x }_r} u^{sc}(\bm{ x }_r, \bm{ x }_s)$
  for all observation points $\bm{ x }_r \in \Gamma_r$ and all source points $\bm{ x }_s \in \Gamma_s$.
  
  \item \textbf{IP-4}: Determine the shape and location of the obstacle $D$
    from 
    $N_{\bm{ x }_r} u^{sc}(\bm{ x }_r, \bm{ x }_s)$
  for all observation points $\bm{ x }_r \in \Gamma_r$ and all source points $\bm{ x }_s \in \Gamma_s$.
  
  \item \textbf{IP-5}: Determine the shape and location of the obstacle $D$
    from 
    $u^{sc}(\bm{ x }_r, \bm{ d })$
  for all observation points $\bm{ x }_r \in \Gamma_r$ and all incident directions $\bm{ d } \in \mathbb{S}$.
  
  \item \textbf{IP-6}: Determine the shape and location of the obstacle $D$
    from 
    $\partial_{\bm{n}(\bm{ x }_{r}) } u^{sc}(\bm{ x }_r,  \bm{ d })$
  for all observation points $\bm{ x }_r \in \Gamma_r$ and  all incident directions $\bm{ d } \in \mathbb{S}$.
  
  \item \textbf{IP-7}: Determine the shape and location of the obstacle $D$
    from 
    $M_{\bm{ x }_r} u^{sc}(\bm{ x }_r,  \bm{ d })$
  for all observation points $\bm{ x }_r \in \Gamma_r$ and  all incident directions $\bm{ d } \in \mathbb{S}$.
  
  \item \textbf{IP-8}: Determine the shape and location of  the obstacle $D$
    from 
    $N_{\bm{ x }_r} u^{sc}(\bm{ x }_r,  \bm{ d })$
  for all observation points $\bm{ x }_r \in \Gamma_r$ and  all incident directions $\bm{ d } \in \mathbb{S}$.
  
  \item \textbf{IP-9}: Determine the shape and location of the obstacle $D$
    from 
    $u^{\infty}(\hat{ \bm x },  \bm{ x }_{s})$
  for all observation directions $\hat{ \bm x } \in \mathbb{S}$ and  all source points $\bm{ x }_s \in \Gamma_s$.
  
  \item \textbf{IP-10}:  Determine the shape and location of the obstacle $D$
    from 
    $u^{\infty}(\hat{ \bm x }, \bm{ d })$
  for all observation directions $\hat{ \bm x } \in \mathbb{S}$ and  all incident directions  $\bm{ d } \in \mathbb{S}$.
  
  \item \textbf{IP-11}: Determine the shape and location of the obstacle $D$
    from 
    $|u(\bm{ x }_r, \bm{ x }_s)|$
  for all observation points $\bm{ x }_r \in \Gamma_r$ and all source points $\bm{ x }_s \in \Gamma_s$.
\end{itemize}

The inverse scattering problems can be divided into two classes: phased and phaseless inverse scattering problems.
For phased inverse scattering problems, numerous studies have focused on reconstructing obstacles from the scattered field generated by point sources or far-field patterns generated by plane waves. Furthermore, we  notice that some works about the direct imaging method are concerned with the case of  the Cauchy data (i.e., the scattered field and its normal derivative) generated by point sources in \cite{Harris_direct_2022} and those generated by plane waves in \cite{Nguyen_numerical_2023}. Similarly, for phaseless inverse scattering problems,  numerical reconstruction could be based on various measurement data such as the phaseless far-field data generated by point sources  \cite{yang_bayesian_2020},  the phaseless near-field data generated by plane waves \cite{zhang_approximate_2020}, the phaseless near-field data generated by point sources \cite{chen_phaseless_2017} and the phaseless far-field data generated by the superposition of the plane waves and a point source \cite{ji_acoustic_2019}.

\section{The direct imaging method with phased data}
\label{sec:phased}
This section is devoted to the direct imaging method for \textbf{IP-1} through \textbf{IP-10}. 
To address the aforementioned ten inverse problems \textbf{IP-1} through \textbf{IP-10}, the following imaging functions are defined:
\begin{align}
  \label{fun-1}
  I_1(\bm{ z }) =& -2 \kappa^4 ~\mathrm{Im}\, \int_{ \Gamma_{r} } \int_{ \Gamma_{s} } \Phi_{\kappa}(\bm{ z },\bm{ x }_r)  \Phi_{\kappa}(\bm{ z },\bm{ x }_s)
                   \overline{u^{sc}(\bm{ x }_r,\bm{ x }_s)} \;\mathrm{d}s(\bm{ x }_s)\mathrm{d}s(\bm{ x }_r), \\
   \label{fun-2}
  I_2(\bm{ z }) =&- 2 \kappa^3 ~\mathrm{Re}\; \int_{ \Gamma_{r} } \int_{ \Gamma_{s} } \Phi_{\kappa}(\bm{ z },\bm{ x }_r)  \Phi_{\kappa}(\bm{ z },\bm{ x }_s)
                   \overline{\partial_{\bm{n}(\bm{ x }_{r}) } u^{sc}(\bm{ x }_r,\bm{ x }_s)}
                   \;\mathrm{d}s(\bm{ x }_s)\mathrm{d}s(\bm{ x }_r), \\
   \label{fun-3}
   I_3(\bm{ z }) =& 2 \kappa^2 ~\mathrm{Im}\, \int_{ \Gamma_{r} } \int_{ \Gamma_{s} } \Phi_{\kappa}(\bm{ z },\bm{ x }_r)  \Phi_{\kappa}(\bm{ z },\bm{ x }_s)
                   \overline{M_{\bm{ x }_{r} } u^{sc}(\bm{ x }_r,\bm{ x }_s)}
                    \;\mathrm{d}s(\bm{ x }_s)\mathrm{d}s(\bm{ x }_r), \\
   \label{fun-4}
   I_4(\bm{ z }) =& -2 \kappa ~\mathrm{Re}\, \int_{ \Gamma_{r} } \int_{ \Gamma_{s} } \Phi_{\kappa}(\bm{ z },\bm{ x }_r)  \Phi_{\kappa}(\bm{ z },\bm{ x }_s)
                   \overline{N_{\bm{ x }_{r} } u^{sc}(\bm{ x }_r,\bm{ x }_s)}
                    \;\mathrm{d}s(\bm{ x }_s)\mathrm{d}s(\bm{ x }_r), \\
   \label{fun-5}
  I_5(\bm{ z }) =& -\frac{ \kappa^3}{4\pi} ~\mathrm{Im}\, \int_{ \Gamma_{r} } \int_{\mathbb{S}  } \Phi_{\kappa}(\bm{ z },\bm{ x }_r) e^{\mathrm{ i } \kappa \bm{ z }\cdot \bm{ d }}
                   \overline{u^{sc}(\bm{ x }_r,\bm{ d })} \;\mathrm{d}s(\bm{ d })\mathrm{d}s(\bm{ x }_r), \\
   \label{fun-6}
  I_6(\bm{ z }) =&  -\frac{ \kappa^2}{4\pi}\mathrm{Re}\, \int_{ \Gamma_{r} } \int_{\mathbb{S}  } \Phi_{\kappa}(\bm{ z },\bm{ x }_r) e^{\mathrm{ i } \kappa \bm{ z }\cdot \bm{ d }}
                   \overline{\partial_{\bm{n}(\bm{ x }_{r}) }u^{sc}(\bm{ x }_r,\bm{ d })}
                   \;\mathrm{d}s(\bm{ d })\mathrm{d}s(\bm{ x }_r), \\
   \label{fun-7}
   I_7(\bm{ z }) =&  ~ \frac{\kappa}{4\pi} \mathrm{Im}\, \int_{ \Gamma_{r} } \int_{\mathbb{S}  } \Phi_{\kappa}(\bm{ z },\bm{ x }_r) e^{\mathrm{ i } \kappa \bm{ z }\cdot \bm{ d }}
                    \overline{M_{\bm{ x }_{r} }u^{sc}(\bm{ x }_r,\bm{ d })} \;\mathrm{d}s(\bm{ d })\mathrm{d}s(\bm{ x }_r), \\
   \label{fun-8}
   I_8(\bm{ z }) =&  -\frac{1}{4\pi}\mathrm{Re}\, \int_{ \Gamma_{r} } \int_{\mathbb{S}  } \Phi_{\kappa}(\bm{ z },\bm{ x }_r) e^{\mathrm{ i } \kappa \bm{ z }\cdot \bm{ d }}
                    \overline{N_{\bm{ x }_{r} } u^{sc}(\bm{ x }_r,\bm{ d })}
                    \;\mathrm{d}s(\bm{ d })\mathrm{d}s(\bm{ x }_r), \\
   \label{fun-9}
  I_9(\bm{ z }) =& -\frac{ \kappa^3}{4\pi} ~\mathrm{Im}\,\int_{\mathbb{S}  } \int_{ \Gamma_s }  e^{-\mathrm{ i } \kappa \bm{ z }\cdot \bm{\hat{ x} }} \Phi_{\kappa}(\bm{ z },\bm{ x }_s)
                   \overline{u^{\infty}(\bm{\hat{ x} },\bm{ x }_s)} \;\mathrm{d}s(\bm{ x }_s)\mathrm{d}s(\bm{\hat{ x} }), \\
   \label{fun-10}
  I_{10}(\bm{ z }) =& -\frac{ \kappa^2}{32\pi^2} ~ \mathrm{Im}\, \int_{ \mathbb{S}  } \int_{\mathbb{S}  } e^{\mathrm{ i } \kappa \bm{ z }\cdot (\bm{ d }-\hat{\bm x})}
                   \overline{u^{\infty}(\bm{\hat{ x} },\bm{ d })} \;\mathrm{d}s(\bm{ d })\mathrm{d}s(\bm{\hat{ x} }).
\end{align}
 We start with some key lemmas that will occupy a central position  in the rigorous analysis of our imaging functions.
 \begin{lemma}[see \cite{chen_reverse_2013}]
   \label{lem:asymptotic}
   \begin{align}
     \label{eq:asy-1}
     \kappa \int_{ \Gamma_{r} }\overline{ \Phi_{\kappa}(\bm{ x },\bm{ x }_r)}\Phi_{\kappa}(\bm{ z },\bm{ x }_r) \;\mathrm{d}s(\bm{ x }_r)
     & =  \mathrm{Im}\, \Phi_{\kappa} (\bm{ x }, \bm{ z }) + w_{r,1} (\bm{ x }, \bm{ z }) \quad \forall  \bm{ x }, \bm{ z } \in  \Omega,\\
     \label{eq:asy-2}
     \kappa \int_{ \Gamma_{s} }\overline{ \Phi_{\kappa}(\bm{ x },\bm{ x }_s)}\Phi_{\kappa}(\bm{ z },\bm{ x }_s) \;\mathrm{d}s(\bm{ x }_r)
     & =  \mathrm{Im}\, \Phi_{\kappa} (\bm{ x }, \bm{ z }) + w_{s,1} (\bm{ x }, \bm{ z }) \quad \forall  \bm{ x }, \bm{ z } \in  \Omega,     
   \end{align}
   where $|\nabla_{\bm{ x }}^3w_{r,1}( \bm{ x }, \bm{ z })|\leq C R_{r}^{-1}$ and $|\nabla_{\bm{ x }}^3 w_{s,1}( \bm{ x }, \bm{ z })|\leq C R_{s}^{-1}$ hold uniformly for any $\bm{ x }$, $\bm{ z }\in \Omega$. Here we employ the notation $|\nabla^3v(\bm{ x })|=\sum_{|\alpha|=0}^3 | \partial^{|\alpha|}v(\bm{ x })/\partial \bm{ x }^{\alpha} | $.
 \end{lemma}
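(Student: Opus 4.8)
The plan is to prove both identities by one argument and focus on \eqref{eq:asy-1}, since \eqref{eq:asy-2} is identical with $\Gamma_s,R_s$ replacing $\Gamma_r,R_r$. Fix $\bm x,\bm z\in\Omega$ and regard $u(\bm y):=\overline{\Phi_{\kappa}(\bm x,\bm y)}$ and $v(\bm y):=\Phi_{\kappa}(\bm z,\bm y)$ as functions of $\bm y$. Because $\kappa$ is real, each of $u,v$ solves the Helmholtz equation $\Delta+\kappa^2$ away from its source point, and $v$ satisfies the outgoing Sommerfeld condition while $u$ satisfies the incoming one. Hence, from the Hankel asymptotics \eqref{eq:Hankel}, on $\Gamma_r$ (where the outward normal is radial) one has
\[
\partial_{\bm n}v=\mathrm i\kappa\,v+O(R_r^{-3/2}),\qquad
\partial_{\bm n}u=-\mathrm i\kappa\,u+O(R_r^{-3/2}),
\]
uniformly for $\bm x,\bm z\in\Omega$, where also $|u|,|v|=O(R_r^{-1/2})$ there.

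Combining the two relations gives $u\,\partial_{\bm n}v-v\,\partial_{\bm n}u=2\mathrm i\kappa\,uv+(u-v)\,O(R_r^{-3/2})$ on $\Gamma_r$, so that
\[
2\mathrm i\kappa\int_{\Gamma_r}uv\,\mathrm ds
=\int_{\Gamma_r}\bigl(u\,\partial_{\bm n}v-v\,\partial_{\bm n}u\bigr)\,\mathrm ds+E_r,
\]
where the pointwise integrand error is $O(R_r^{-2})$ and $|\Gamma_r|=2\pi R_r$, whence $|E_r|\le CR_r^{-1}$. I would then evaluate the remaining integral by Green's second identity for the Helmholtz operator on $B_r\setminus(\overline{B(\bm x,\varepsilon)}\cup\overline{B(\bm z,\varepsilon)})$: since $u,v$ solve the same homogeneous equation there, the interior integral vanishes and the $\Gamma_r$ integral equals the sum of the contributions on the two small circles. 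Letting $\varepsilon\to0$, the reproducing property of the fundamental solution (the log singularity of $\Phi_\kappa$ is real, so $u$ has the same singularity as $v$) together with the symmetry $\Phi_\kappa(\bm x,\bm z)=\Phi_\kappa(\bm z,\bm x)$ yields
\[
\int_{\Gamma_r}\bigl(u\,\partial_{\bm n}v-v\,\partial_{\bm n}u\bigr)\,\mathrm ds
=\Phi_{\kappa}(\bm x,\bm z)-\overline{\Phi_{\kappa}(\bm x,\bm z)}
=2\mathrm i\,\mathrm{Im}\,\Phi_{\kappa}(\bm x,\bm z).
\]
Dividing by $2\mathrm i$ gives \eqref{eq:asy-1} with $w_{r,1}=E_r/(2\mathrm i)$, so in particular $|w_{r,1}|\le CR_r^{-1}$.

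It remains to upgrade this scalar estimate to the derivative bound $|\nabla_{\bm x}^3w_{r,1}|\le CR_r^{-1}$. Since the leading term $\mathrm{Im}\,\Phi_{\kappa}(\bm x,\bm z)$ is smooth away from $\bm x=\bm z$ and carries no $R_r$-dependence, all such dependence lives in $E_r$, which is a boundary integral over $\Gamma_r$ built from $u=\overline{\Phi_\kappa(\bm x,\cdot)}$ and $\partial_{\bm n}u$. I would differentiate under the integral sign and use that each application of $\nabla_{\bm x}$ to $\overline{\Phi_\kappa(\bm x,\bm x_r)}$ multiplies the leading Hankel asymptotic by the bounded factor $\sim\mathrm i\kappa\,\widehat{(\bm x-\bm x_r)}$, hence preserves both the $O(R_r^{-1/2})$ size of $u$ and the $O(R_r^{-3/2})$ size of its radiation-condition defect on $\Gamma_r$. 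Consequently $\nabla_{\bm x}^3E_r$ is again the integral of an $O(R_r^{-2})$ integrand over a curve of length $2\pi R_r$, giving the claimed uniform $O(R_r^{-1})$ decay.

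The main obstacle is precisely this last step: making the differentiation of the error rigorous and uniform. One must control the full (not merely leading-order) Hankel expansions of $\Phi_\kappa$ and its $\bm x$-derivatives on $\Gamma_r$, verify that the remainders are uniform for $\bm x,\bm z$ in the compact set $\Omega$ (this is where $\mathrm{dist}(\Omega,\Gamma_r)\gtrsim R_r$ is used), and justify interchanging $\nabla_{\bm x}^3$ with the boundary integral. Once these uniform asymptotics are secured, the stated decay of $w_{r,1}$ and of its derivatives up to third order follows from the length-times-pointwise-size count above, exactly along the lines of the acoustic argument in \cite{chen_reverse_2013}.
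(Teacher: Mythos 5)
Your proposal is correct and follows essentially the same route the paper relies on: the paper does not reprove this lemma but cites \cite{chen_reverse_2013}, whose argument is precisely your Helmholtz--Kirchhoff identity computation (Green's second identity with the reproducing property giving $2\mathrm{i}\,\mathrm{Im}\,\Phi_\kappa(\bm x,\bm z)$, plus the radiation-condition defect estimate on $\Gamma_r$), and the paper's remark that the upgrade to third-order $\bm x$-derivatives follows from the uniform asymptotics \eqref{eq:highorderradiation-1}--\eqref{eq:highorderradiation-2} is exactly your final differentiation-under-the-integral step. The one detail worth tightening is to write the error explicitly as $E_r=-\int_{\Gamma_r}\bigl(u\,d_v-v\,d_u\bigr)\,\mathrm{d}s$ with $d_v=\partial_{\bm n}v-\mathrm{i}\kappa v$ and $d_u=\partial_{\bm n}u+\mathrm{i}\kappa u$, so that only $u$ and $d_u$ depend on $\bm x$ and the bounds \eqref{eq:highorderradiation-1}--\eqref{eq:highorderradiation-2} apply term by term.
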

 Note that Lemma \ref{lem:asymptotic} is slightly different from Lemma 3.2 in \cite{chen_reverse_2013}, which is a direct consequence of the following asymptotic relation
 \begin{align}
   \label{eq:highorderradiation-1}
   \frac{\partial^{|\alpha|}}{\partial \bm{ x }^{\alpha}} \Phi_{\kappa}(\bm{ x },\bm{ x }_s) &= O \left( R_s^{-1/2} \right), \quad|\alpha|=0,1,2,3,\\
    \label{eq:highorderradiation-2}
   \frac{\partial^{|\alpha|}}{\partial \bm{ x }^{\alpha}}\big(
   \partial_{r_s} \Phi_{\kappa}(\bm{ x },\bm{ x }_s) 
   -\mathrm{ i } \kappa \Phi_{\kappa}(\bm{ x },\bm{ x }_s)\big) &= O \left( R_s^{-3/2} \right), \quad
   |\alpha|=0,1,2,3
 \end{align}
for any $\bm{ x }\in \Omega$ and  $\bm{ x }_s\in \Gamma_s$.

Applying Lemma \ref{lem:asymptotic} could immediately yield the following result.

\begin{lemma}
  \label{lem:asymptotic-2}
  The following relation hold.
     \begin{align}
       \label{eq:asy-3}
    -2 \kappa^{3}\int_{ \Gamma_{r} }\overline{ G(\bm{ x },\bm{ x }_r)}\Phi_{\kappa}(\bm{ z },\bm{ x }_r) \;\mathrm{d}s(\bm{ x }_r)
     & =  \mathrm{Im}\, \Phi_{\kappa} (\bm{ x }, \bm{ z }) + w_{r,2} (\bm{ x }, \bm{ z }) \quad \forall  \bm{ x }, \bm{ z } \in  \Omega,\\
       \label{eq:asy-4}
    -2 \kappa^{2}\int_{ \Gamma_{r} }\overline{ \partial_{\bm{n}(\bm{ x }_r) } G(\bm{ x },\bm{ x }_r)}\Phi_{\kappa}(\bm{ z },\bm{ x }_r) \;\mathrm{d}s(\bm{ x }_r)
     & =  -\mathrm{ i }~ \big( \mathrm{Im}\, \Phi_{\kappa} (\bm{ x }, \bm{ z }) + w_{r,3} (\bm{ x }, \bm{ z }) \big) \quad \forall  \bm{ x }, \bm{ z } \in  \Omega,\\
       \label{eq:asy-5}
    2 \kappa\int_{ \Gamma_{r} }\overline{M_{\bm{ x }_r} G(\bm{ x },\bm{ x }_r)}\Phi_{\kappa}(\bm{ z },\bm{ x }_r) \;\mathrm{d}s(\bm{ x }_r)
     & =  \mathrm{Im}\, \Phi_{\kappa} (\bm{ x }, \bm{ z }) + w_{r,4} (\bm{ x }, \bm{ z }) \quad \forall  \bm{ x }, \bm{ z } \in  \Omega,\\
              \label{eq:asy-6}
    2\int_{ \Gamma_{r} }\overline{N_{\bm{ x }_r} G(\bm{ x },\bm{ x }_r)}\Phi_{\kappa}(\bm{ z },\bm{ x }_r) \;\mathrm{d}s(\bm{ x }_r)
     & =  \mathrm{ i } ~ \big( \mathrm{Im}\, \Phi_{\kappa} (\bm{ x }, \bm{ z }) + w_{r,5} (\bm{ x }, \bm{ z }) \big) \quad \forall  \bm{ x }, \bm{ z } \in  \Omega,
     \end{align}
      where $|\nabla_{\bm{ x }}^3w_{r,j}( \bm{ x }, \bm{ z })|\leq C R_{r}^{-1}$ $(j=2,3,4,5)$  hold uniformly for any $\bm{ x }$, $\bm{ z }\in \Omega$.
\end{lemma}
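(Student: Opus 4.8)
The plan is to substitute the decomposition of the biharmonic fundamental solution, namely $G(\bm{x},\bm{x}_r)=\frac{1}{2\kappa^{2}}\big(\Phi_{\mathrm{i}\kappa}(\bm{x},\bm{x}_r)-\Phi_{\kappa}(\bm{x},\bm{x}_r)\big)$ from \eqref{eq:funda-bi}, into each of the four integrals and then to reduce everything back to the single known relation \eqref{eq:asy-1}. The guiding observation is that the modified-Helmholtz part $\Phi_{\mathrm{i}\kappa}(\bm{x},\bm{x}_r)=\frac{\mathrm{i}}{4}H^{(1)}_0(\mathrm{i}\kappa|\bm{x}-\bm{x}_r|)$ decays like $e^{-\kappa|\bm{x}-\bm{x}_r|}$, so that $\Phi_{\mathrm{i}\kappa}$ together with all of its $\bm{x}$-derivatives and with the operators $\partial_{\bm{n}}$, $M$, $N$ applied in $\bm{x}_r$ is exponentially small, uniformly for $\bm{x}\in\Omega$ and $\bm{x}_r\in\Gamma_r$; after integration against $\Phi_{\kappa}(\bm{z},\cdot)=O(R_r^{-1/2})$ over the curve $\Gamma_r$ of length $2\pi R_r$, its contribution is negligible compared with $R_r^{-1}$ (exponential decay beats every power, for all derivatives). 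Hence only the $-\Phi_{\kappa}$ part survives to leading order in each identity.

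For \eqref{eq:asy-3} this is immediate: multiplying through gives $-2\kappa^{3}\overline{G}=-\kappa\big(\overline{\Phi_{\mathrm{i}\kappa}}-\overline{\Phi_{\kappa}}\big)$, and \eqref{eq:asy-1} converts $\kappa\int_{\Gamma_r}\overline{\Phi_{\kappa}}\,\Phi_{\kappa}\,\mathrm{d}s$ into $\mathrm{Im}\,\Phi_{\kappa}(\bm{x},\bm{z})+w_{r,1}$, with the $\Phi_{\mathrm{i}\kappa}$ term folded into $w_{r,2}$. For the remaining three, the key is to reduce the conjugated normal derivative, bending moment and transverse force of $G$ to a scalar multiple of $\overline{\Phi_{\kappa}}$ plus an $O(R_r^{-3/2})$ error. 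Concretely, I would use the Sommerfeld relation on $\Gamma_r$, $\partial_{\bm{n}(\bm{x}_r)}\overline{\Phi_{\kappa}}=-\mathrm{i}\kappa\,\overline{\Phi_{\kappa}}+O(R_r^{-3/2})$, together with the far-field reductions \eqref{eq:MG}--\eqref{eq:NG} (already established in the proof of Lemma \ref{lem:rep}) specialized to $\Gamma_r$ and the identity $G^{pr}=-\frac{1}{2\kappa^{2}}\Phi_{\kappa}$, to obtain (with arguments $(\bm{x},\bm{x}_r)$ suppressed) $\overline{\partial_{\bm{n}(\bm{x}_r)}G}=\frac{\mathrm{i}}{2\kappa}\overline{\Phi_{\kappa}}+O(R_r^{-3/2})$, $\overline{M_{\bm{x}_r}G}=\frac{1}{2}\overline{\Phi_{\kappa}}+O(R_r^{-3/2})$ and $\overline{N_{\bm{x}_r}G}=\frac{\mathrm{i}\kappa}{2}\overline{\Phi_{\kappa}}+O(R_r^{-3/2})$. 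Substituting these and again invoking \eqref{eq:asy-1} produces exactly the prefactors $-\mathrm{i}$, $1$ and $\mathrm{i}$ in front of $\mathrm{Im}\,\Phi_{\kappa}(\bm{x},\bm{z})$ claimed in \eqref{eq:asy-4}, \eqref{eq:asy-5} and \eqref{eq:asy-6}, with the leftover pieces collected into $w_{r,3}$, $w_{r,4}$, $w_{r,5}$.

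It then remains to verify the uniform bound $|\nabla_{\bm{x}}^{3}w_{r,j}|\le CR_r^{-1}$. Each $w_{r,j}$ is the sum of $w_{r,1}$ (controlled by Lemma \ref{lem:asymptotic}), the exponentially small $\Phi_{\mathrm{i}\kappa}$ integral, and the integral of the $O(R_r^{-3/2})$ reduction error against $\Phi_{\kappa}(\bm{z},\cdot)=O(R_r^{-1/2})$ over $\Gamma_r$; the last contributes $O(R_r^{-3/2})\cdot O(R_r^{-1/2})\cdot R_r=O(R_r^{-1})$, which is the sharp balance fixing the $R_r^{-1}$ rate. The one genuinely delicate point, and what I expect to be the main obstacle, is that the reductions of $\partial_{\bm{n}(\bm{x}_r)}G$, $M_{\bm{x}_r}G$ and $N_{\bm{x}_r}G$ must hold not merely pointwise but together with their $\bm{x}$-derivatives up to third order, with the $O(R_r^{-3/2})$ error uniform in $\bm{x}\in\Omega$. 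This is the $M,N$-analogue of \eqref{eq:highorderradiation-1}--\eqref{eq:highorderradiation-2}, and I would justify it by differentiating the Hankel asymptotics \eqref{eq:Hankel} in $\bm{x}$: since $\Phi_{\kappa}(\bm{x},\bm{x}_r)$ depends on $\bm{x}$ only through $|\bm{x}-\bm{x}_r|$, each $\bm{x}$-derivative brings down only bounded factors proportional to $\kappa\,(\bm{x}-\bm{x}_r)/|\bm{x}-\bm{x}_r|$ and hence preserves the powers of $R_r$, so the estimates survive differentiation uniformly on the compact set $\Omega$.
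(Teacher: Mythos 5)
Your proposal is correct and follows essentially the same route as the paper: decompose $G$ via \eqref{eq:funda-bi}, discard the exponentially small $\Phi_{\mathrm{i}\kappa}$ contribution using \eqref{eq:antiHelmholtz}, reduce $\partial_{\bm{n}(\bm{x}_r)}$, $M_{\bm{x}_r}$, $N_{\bm{x}_r}$ on $\Gamma_r$ to scalar multiples of $\overline{\Phi_\kappa}$ with $O(R_r^{-3/2})$ errors controlled together with $\bm{x}$-derivatives up to third order (the analogue of \eqref{eq:highorderradiation-1}--\eqref{eq:highorderradiation-2}), and conclude by \eqref{eq:asy-1}. The only cosmetic difference is that you invoke the far-field reductions \eqref{eq:MG}--\eqref{eq:NG} directly, whereas the paper's proof repackages the same computation through the exact identity $\Delta_{\bm{x}_r}G=\frac{1}{2}(\Phi_{\mathrm{i}\kappa}+\Phi_{\kappa})$ in \eqref{eq:laplace} together with the lower-order remainders $\mathcal{V}_1=M_{\bm{x}_r}G-\Delta_{\bm{x}_r}G$ and $\mathcal{V}_2=N_{\bm{x}_r}G+\partial_{\bm{n}(\bm{x}_r)}\Delta_{\bm{x}_r}G$ coming from \eqref{eq:Mpolar}--\eqref{eq:Npolar}.
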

\begin{proof}
  Note that
  \begin{align*}
    &-2 \kappa^{3}\int_{ \Gamma_{r} }\overline{ G(\bm{ x },\bm{ x }_r)}\Phi_{\kappa}(\bm{ z },\bm{ x }_r) \;\mathrm{d}s(\bm{ x }_r)\\
    = & \kappa \int_{ \Gamma_{r} }\overline{ \Phi_{\kappa}(\bm{ x },\bm{ x }_r)}\Phi_{\kappa}(\bm{ z },\bm{ x }_r) \;\mathrm{d}s(\bm{ x }_r)
    - \kappa \int_{ \Gamma_{r} }\overline{ \Phi_{\mathrm{ i } \kappa}(\bm{ x },\bm{ x }_r)}\Phi_{\kappa}(\bm{ z },\bm{ x }_r) \;\mathrm{d}s(\bm{ x }_r).
  \end{align*}
  Using the following asymptotic behavior of the Hankel
functions
  \begin{equation}
    \label{eq:antiHelmholtz}
    H^{(1)}_m(\mathrm{ i } t)=O \left( \dfrac{e^{-t }}{ t ^{1/2} } \right)\quad
     t \to +\infty, \quad m = 0,1,\cdots
  \end{equation}
and \eqref{eq:asy-1} in  Lemma \ref{lem:asymptotic}, we can prove \eqref{eq:asy-3}. The estimate of $w_{r,2}$ follows from the estimate of $w_{r,1}$, \eqref{eq:highorderradiation-1} and the asymptotic behavior \eqref{eq:antiHelmholtz}.

  By the fact that $\Phi_{\kappa}$ satisfies the Sommerfeld radiation condition, we have that
  \begin{align*}
    & -2 \kappa^{2}\int_{ \Gamma_{r} }\overline{ \partial_{\bm{n}(\bm{ x }_r) } G(\bm{ x },\bm{ x }_r)}\Phi_{\kappa}(\bm{ z },\bm{ x }_r) \;\mathrm{d}s(\bm{ x }_r) \\
    = &  \int_{ \Gamma_{r} }\overline{ \partial_{\bm{n}(\bm{ x }_r) } \Phi_{\kappa}(\bm{ x },\bm{ x }_r)} \Phi_{\kappa}(\bm{ z },\bm{ x }_r) \;\mathrm{d}s(\bm{ x }_r) -  \int_{ \Gamma_{r} }\overline{ \partial_{\bm{n}(\bm{ x }_r) } \Phi_{\mathrm{ i } \kappa}(\bm{ x },\bm{ x }_r)} \Phi_{\kappa}(\bm{ z },\bm{ x }_r) \;\mathrm{d}s(\bm{ x }_r) \\
    = &  -\mathrm{ i }\kappa~  \int_{ \Gamma_{r} }\overline{ \Phi_{\kappa}(\bm{ x },\bm{ x }_r)}\Phi_{\kappa}(\bm{ z },\bm{ x }_r) \;\mathrm{d}s(\bm{ x }_r) +
        \int_{ \Gamma_{r} }\overline{ \partial_{\bm{n}(\bm{ x }_r) } \Phi_{\kappa}(\bm{ x },\bm{ x }_r)-\mathrm{ i } \kappa  \Phi_{\kappa}(\bm{ x },\bm{ x }_r) } \Phi_{\kappa}(\bm{ z },\bm{ x }_r) \;\mathrm{d}s(\bm{ x }_r) \\
    & -  \int_{ \Gamma_{r} }\overline{ \partial_{\bm{n}(\bm{ x }_r) } \Phi_{\mathrm{ i } \kappa}(\bm{ x },\bm{ x }_r)} \Phi_{\kappa}(\bm{ z },\bm{ x }_r) \;\mathrm{d}s(\bm{ x }_r) 
     =  -\mathrm{ i }~ \big( \mathrm{Im}\, \Phi_{\kappa} (\bm{ x }, \bm{ z }) + w_{r,3} (\bm{ x }, \bm{ z }) \big),
   \end{align*}
   where we use \eqref{eq:antiHelmholtz}. The estimate of $w_{r,3}$ can be obtained by the estimate of $w_{r,1}$, \eqref{eq:highorderradiation-2} and
   the asymptotic behavior \eqref{eq:antiHelmholtz}.

   Since $\Phi_{\kappa}$ (or $\Phi_{\mathrm{ i } \kappa}$) is the fundamental solution of Helmholtz equation with wave number $\kappa$ (or $\mathrm{ i } \kappa$), we obtain
   \begin{align}
     \notag
     \Delta_{\bm{ x }_r}  G(\bm{ x },\bm{ x }_r)
     &= \frac{1}{2\kappa^{2}}\left(\Delta_{\bm{ x }_r}\Phi_{\mathrm{i}\kappa}(\bm{x},\bm{y})
       -\Delta_{\bm{ x }_r} \Phi_{\kappa}(\bm{x},\bm{y})\right)\\
     \label{eq:laplace}
     & =  \frac{1}{2\kappa^{2}}\left(\kappa^2\Phi_{\mathrm{i}\kappa}(\bm{x},\bm{y})
           +\kappa^2\Phi_{\kappa}(\bm{x},\bm{y})\right)
      = \frac{1}{2}\left(\Phi_{\mathrm{i}\kappa}(\bm{x},\bm{y})
           +\Phi_{\kappa}(\bm{x},\bm{y})\right).
   \end{align}
   A substitution implies
   \begin{align*}
     & 2 \kappa\int_{ \Gamma_{r} }\overline{M_{\bm{ x }_r} G(\bm{ x },\bm{ x }_r)}\Phi_{\kappa}(\bm{ z },\bm{ x }_r) \;\mathrm{d}s(\bm{ x }_r)\\
     =&  \kappa \int_{ \Gamma_{r} }\left[\overline{ \Phi_{\kappa}(\bm{ x },\bm{ x }_r)}
          + 
        \overline{ \Phi_{\mathrm{ i } \kappa}(\bm{ x },\bm{ x }_r)}
   + 2 \overline{\mathcal{V}_1(\bm{ x },\bm{ x }_r) }\right] \Phi_{\kappa}(\bm{ z },\bm{ x }_r) \;\mathrm{d}s(\bm{ x }_r)  \\
     =&   \mathrm{Im}\, \Phi_{\kappa} (\bm{ x }, \bm{ z }) + w_{r,1} (\bm{ x }, \bm{ z })
        + \kappa \int_{ \Gamma_{r} }\overline{ \Phi_{\mathrm{ i }\kappa}(\bm{ x },\bm{ x }_r)}\Phi_{\kappa}(\bm{ z },\bm{ x }_r) \;\mathrm{d}s(\bm{ x }_r) 
        + 2 \kappa\int_{ \Gamma_{r} }\overline{\mathcal{V}_1(\bm{ x },\bm{ x }_r) }\Phi_{\kappa}(\bm{ z },\bm{ x }_r) \;\mathrm{d}s(\bm{ x }_r)  \\
       =&   \mathrm{Im}\, \Phi_{\kappa} (\bm{ x }, \bm{ z }) + w_{r,4} (\bm{ x }, \bm{ z }),
   \end{align*}
   where $\mathcal{V}_1(\bm{ x },\bm{ x }_r):=M_{\bm{ x }_r} G(\bm{ x },\bm{ x }_r)
        -\Delta_{\bm{ x }_r}G(\bm{ x },\bm{ x }_r)$  and we use \eqref{eq:asy-1} 
   and \eqref{eq:Mpolar}.  The estimate of $w_{r,4}$ follows from the estimate of $w_{r,1}$ and the asymptotic behavior \eqref{eq:antiHelmholtz}.

   Analogously, by \eqref{eq:Npolar}, \eqref{eq:laplace} and \eqref{eq:asy-1}, 
   we have
   \begin{align*}
     & 2 \int_{ \Gamma_{r} }\overline{N_{\bm{ x }_r} G(\bm{ x },\bm{ x }_r)}\Phi_{\kappa}(\bm{ z },\bm{ x }_r) \;\mathrm{d}s(\bm{ x }_r)\\
     =&  - \int_{ \Gamma_r } \left[ \overline{ \partial_{\bm{n}(\bm{ x }_r) }\Phi_{\kappa}(\bm{ x },\bm{ x }_r)}
        - \overline{ \partial_{\bm{n}(\bm{ x }_r) }\Phi_{\mathrm{ i }\kappa}(\bm{ x },\bm{ x }_r)}     + 2 \overline{\mathcal{V}_2(\bm{ x },\bm{ x }_r)} \right]
        \Phi_{\kappa}(\bm{ z },\bm{ x }_r) \;\mathrm{d}s(\bm{ x }_r)\\
     = & \mathrm{ i } \kappa \int_{ \Gamma_{r} }\overline{ \Phi_{\kappa}(\bm{ x },\bm{ x }_r)}\Phi_{\kappa}(\bm{ z },\bm{ x }_r) \;\mathrm{d}s(\bm{ x }_r)
         -   \int_{ \Gamma_{r} }\overline{ \partial_{\bm{n}(\bm{ x }_r) }\Phi_{\kappa}(\bm{ x },\bm{ x }_r) - \mathrm{ i } \kappa\Phi_{\kappa}(\bm{ x },\bm{ x }_r)}\Phi_{\kappa}(\bm{ z },\bm{ x }_r) \;\mathrm{d}s(\bm{ x }_r) \\
     &     -  \int_{ \Gamma_{r} }\overline{ \partial_{\bm{n}(\bm{ x }_r) }\Phi_{\mathrm{ i }\kappa}(\bm{ x },\bm{ x }_r)}\Phi_{\kappa}(\bm{ z },\bm{ x }_r) \;\mathrm{d}s(\bm{ x }_r)
    +  2\int_{ \Gamma_{r} }\overline{ \mathcal{V}_2(\bm{ x },\bm{ x }_r)} \Phi_{\kappa}(\bm{ z },\bm{ x }_r) \;\mathrm{d}s(\bm{ x }_r) \\
     = &   \mathrm{ i } ~ \big(  \mathrm{Im}\, \Phi_{\kappa} (\bm{ x }, \bm{ z }) + w_{r,5} (\bm{ x }, \bm{ z }) \big).
   \end{align*}
   where  $\mathcal{V}_2(\bm{ x },\bm{ x }_r):=N_{\bm{ x }_r} G(\bm{ x },\bm{ x }_r)
        +\partial_{\bm{n}(\bm{ x }_{r})} \Delta_{\bm{ x }_r}G(\bm{ x },\bm{ x }_r)$.
The estimate of $w_{r,5}$ can be obtained by the estimate of $w_{r,1}$, \eqref{eq:highorderradiation-2} and
the asymptotic behavior \eqref{eq:antiHelmholtz}.

   Therefore, the proof is complete.
\end{proof}

We also need the following Funk-Hecke formula (c.f. \cite[Lemma 2.7]{liu_novel_2017}) in our analysis.
\begin{lemma}
  \label{lem:plane}
  For any $\xi, \bm{ z } \in  \mathbb{R}^2$, we have
  \begin{align}
    \label{eq:plane}
    \int_{ \mathbb{S}  } e^{\mathrm{ i } \kappa \hat{\bm x}\cdot(\bm{ z }-\xi)}\;\mathrm{d}s( \hat{\bm x})
    = 8\pi ~ \mathrm{Im}\,\Phi_{\kappa}(\xi,\bm{ z }).
  \end{align}
\end{lemma}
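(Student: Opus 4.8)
The plan is to prove the Funk--Hecke identity \eqref{eq:plane} by recognizing the left-hand side as an integral representation of a Bessel function and then connecting it to the explicit form of $\mathrm{Im}\,\Phi_{\kappa}$. First I would set $\bm{r} = \bm{z} - \xi$ and write $r = |\bm{r}|$, so the left-hand side becomes $\int_{\mathbb{S}} e^{\mathrm{i}\kappa\hat{\bm x}\cdot \bm{r}}\,\mathrm{d}s(\hat{\bm x})$. Parametrizing $\hat{\bm x} = (\cos\phi, \sin\phi)$ and writing $\hat{\bm x}\cdot\bm{r} = r\cos(\phi - \phi_0)$ where $\phi_0$ is the polar angle of $\bm{r}$, the integral reduces by the standard Jacobi--Anger / Bessel integral representation $J_0(t) = \frac{1}{2\pi}\int_0^{2\pi} e^{\mathrm{i}t\cos\psi}\,\mathrm{d}\psi$ to $2\pi J_0(\kappa r)$.

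Next I would evaluate the right-hand side using the explicit formula for the fundamental solution. Since $\Phi_{\kappa}(\xi,\bm{z}) = \frac{\mathrm{i}}{4}H^{(1)}_0(\kappa r)$ and the Hankel function decomposes as $H^{(1)}_0(\kappa r) = J_0(\kappa r) + \mathrm{i}\,Y_0(\kappa r)$, a direct computation gives $\mathrm{Im}\,\Phi_{\kappa}(\xi,\bm{z}) = \mathrm{Im}\!\left(\frac{\mathrm{i}}{4}(J_0 + \mathrm{i}Y_0)\right) = \frac{1}{4}J_0(\kappa r)$. Multiplying by $8\pi$ yields $8\pi\,\mathrm{Im}\,\Phi_{\kappa}(\xi,\bm{z}) = 2\pi J_0(\kappa r)$, which matches the left-hand side exactly. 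This establishes the identity.

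The only subtlety, and the main point worth care, is confirming that $J_0$ is real-valued for real argument (so that extracting the imaginary part of $\frac{\mathrm{i}}{4}H^{(1)}_0$ picks out precisely $\frac{1}{4}J_0$ and kills the $Y_0$ contribution); this is standard, since $J_0(t) = \frac{1}{2\pi}\int_0^{2\pi}\cos(t\cos\psi)\,\mathrm{d}\psi$ is manifestly real. I would also note that the identity holds for all $\bm{r}$ including $\bm{r}=\bm{0}$, where both sides equal $2\pi$ (using $J_0(0)=1$), so there is no singularity to worry about in contrast to $\Phi_{\kappa}$ itself, whose imaginary part remains bounded. Since the argument is entirely a matter of matching two closed-form expressions through the Bessel integral representation, there is no genuine obstacle; the result can alternatively be cited directly from \cite{liu_novel_2017}, which the paper in fact does.
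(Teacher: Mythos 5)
Your proof is correct. Note that the paper itself gives no proof of this lemma at all --- it simply cites Lemma 2.7 of \cite{liu_novel_2017} --- so your argument supplies the standard derivation behind that citation rather than diverging from anything in the text. The two steps are exactly right: parametrizing $\hat{\bm x}=(\cos\phi,\sin\phi)$ and invoking $J_0(t)=\frac{1}{2\pi}\int_0^{2\pi}e^{\mathrm{i}t\cos\psi}\,\mathrm{d}\psi$ gives the left-hand side as $2\pi J_0(\kappa|\bm{z}-\xi|)$, while $\Phi_{\kappa}=\frac{\mathrm{i}}{4}H_0^{(1)}=\frac{\mathrm{i}}{4}(J_0+\mathrm{i}Y_0)$ with $J_0,Y_0$ real on $(0,\infty)$ gives $\mathrm{Im}\,\Phi_{\kappa}=\frac{1}{4}J_0$, so $8\pi\,\mathrm{Im}\,\Phi_{\kappa}=2\pi J_0$ as required. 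Your two side remarks are also well placed: the vanishing of $\int_0^{2\pi}\sin(t\cos\psi)\,\mathrm{d}\psi$ (via $\psi\mapsto\psi+\pi$) justifies dropping the imaginary part of the Bessel integral, and the observation that both sides equal $2\pi$ at $\bm{z}=\xi$ confirms that $\mathrm{Im}\,\Phi_{\kappa}$, unlike $\Phi_{\kappa}$ itself, is regular at coincident points, so \eqref{eq:plane} indeed holds for all $\xi,\bm{z}\in\mathbb{R}^2$ as stated.
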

Now we are in a position to  present the mathematical analysis of these imaging functions for  extended obstacles in the following theorems.

\begin{theorem}
  For any $\bm{ z }\in \Omega$, it holds
     \begin{equation*}
       I_{j}(\bm{ z }) =
       \int_{ \mathbb{S}  } | \psi^{\infty}(\cdot, \bm{ z } ) | ^2 \;\mathrm{d}s(\cdot)
       +  w_j(\bm{ z }) \quad  \text{for} \quad j=1,2,\cdots,10,
     \end{equation*}
     where $\psi^{\infty}(\cdot, \bm{ z } )$ is the far-field pattern of $\psi(\cdot, \bm{ z } )$, which is defined as the solution to the  following boundary value problem:
     \begin{subequations}\notag
    \begin{align}
\Delta^{2}_{\bm{ x }}\psi(\bm{ x }, \bm{ z }) - \kappa^4 \psi(\bm{ x }, \bm{ z })  & =   0  \quad \mathrm{ in } ~ D^c, \\
( \mathcal{B}_1  \psi({ x }, \bm{ z }), \mathcal{B}_{2} \psi(\bm{ x }, \bm{ z }))    &  =  - (\mathcal{B}_1 \mathrm{Im}\, \Phi(\bm{ x }, \bm{ z }), \mathcal{B}_2  \mathrm{Im}\, \Phi(\bm{ x }, \bm{ z }))  \quad \mathrm{ on } ~ \Gamma,  \\
      \partial_{r}\psi(\bm{ x }, \bm{ z }) - \mathrm{ i } \kappa \psi(\bm{ x }, \bm{ z })  &  =  o \left( \dfrac{1}{\sqrt{r}} \right) \quad r =| \bm{ x } | \to \infty.
\end{align}
\end{subequations}
     Furthermore, the following inequalities hold
     \begin{align*}
           \| w_j \|_{L^{\infty}(\Omega)} &\leq C(R_r^{-1}+ R_s^{-1}) \quad \text{for}\quad  j=1,\cdots,4,\\
     \| w_j \|_{L^{\infty}(\Omega)} &\leq C R_s^{-1} \quad \text{for}\quad  j=5,\cdots,8, \quad 
     \| w_9 \|_{L^{\infty}(\Omega)} \leq C R_r^{-1} ,\quad w_{10} = 0.
     \end{align*}

   \end{theorem}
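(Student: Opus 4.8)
The plan is to show that every $I_j$ collapses, after the two back-propagation integrations, to one and the same boundary functional on $\Gamma$ built from the Cauchy data of $\psi(\cdot,\bm z)$ and of the incident field $\mathrm{Im}\,\Phi(\cdot,\bm z)$, and then to identify that functional with $\int_{\mathbb S}|\psi^{\infty}|^2$ via the energy identity underlying Lemma \ref{lem:sign}. I would organize the ten functions by their two integration variables: the receiver variable ($\bm x_r\in\Gamma_r$ for $I_1$--$I_8$, $\hat{\bm x}\in\mathbb S$ for $I_9,I_{10}$) and the source variable ($\bm x_s\in\Gamma_s$ for $I_1$--$I_4,I_9$, $\bm d\in\mathbb S$ for $I_5$--$I_8,I_{10}$). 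Throughout, $w$ denotes the scattered field for the given incident source.

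First I would carry out the receiver-side back-propagation. For the near-field functions $I_1$--$I_8$ I insert the Green representation \eqref{eq:Green-rep} for $w$ and apply Lemma \ref{lem:asymptotic-2}: relation \eqref{eq:asy-3} treats the Dirichlet datum in $I_1,I_5$, and \eqref{eq:asy-4}--\eqref{eq:asy-6} treat the receiver operators $\partial_{\bm n},M,N$ in $I_2,I_3,I_4$ (and $I_6,I_7,I_8$). The decisive observation is that the operators $\partial_{\bm n(\bm y)},M_{\bm y},N_{\bm y}$ in \eqref{eq:Green-rep} act on the obstacle variable $\bm y$ and hence pass \emph{through} the $\Gamma_r$-integral, so each receiver datum produces the identical boundary pairing $\int_{\Gamma}\{\mathrm{Im}\,\Phi\,\overline{Nw}+\partial_{\bm n}\mathrm{Im}\,\Phi\,\overline{Mw}-N\,\mathrm{Im}\,\Phi\,\overline{w}-M\,\mathrm{Im}\,\Phi\,\overline{\partial_{\bm n}w}\}\,\mathrm ds$ up to one explicit scalar; the powers of $\kappa$ and the alternation between $\mathrm{Re}$ and $\mathrm{Im}$ in the definitions of $I_1$--$I_8$ are dictated precisely by the coefficients $-2\kappa^3,-2\kappa^2,2\kappa,2$ and the phases $1,-\mathrm i,1,\mathrm i$ in \eqref{eq:asy-3}--\eqref{eq:asy-6}. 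For $I_9,I_{10}$ I instead use the far-field representation \eqref{eq:far-rep} together with the Funk--Hecke identity \eqref{eq:plane}, which yields the same pairing, now exactly.

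Next comes the source-side back-propagation, which I would do by superposition. Integrating the Cauchy data of $w=u^{sc}(\cdot,\bm x_s)$ (resp. $u^{sc}(\cdot,\bm d)$) against $\Phi_{\kappa}(\bm z,\bm x_s)$ over $\Gamma_s$ (resp. against $e^{\mathrm i\kappa\bm z\cdot\bm d}$ over $\mathbb S$) superposes incident point sources (resp. plane waves); by the conjugate of \eqref{eq:asy-2} (resp. by \eqref{eq:plane}) the superposed incident field equals $\mathrm{Im}\,\Phi(\cdot,\bm z)$ up to a scalar and a remainder, so by linearity of the scattering map the superposed scattered field is $\psi(\cdot,\bm z)$ up to the same scalar and remainder. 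Since $\partial_{\bm n},M,N$ commute with this integration, every datum of $w$ is converted into the matching datum of $\psi$. After this step each $I_j$ equals $\mathrm{Im}$ of the symplectic Green pairing $\mathcal J(\overline\psi,\mathrm{Im}\,\Phi):=\int_{\Gamma}\{\mathrm{Im}\,\Phi\,\overline{N\psi}+\partial_{\bm n}\mathrm{Im}\,\Phi\,\overline{M\psi}-N\,\mathrm{Im}\,\Phi\,\overline{\psi}-M\,\mathrm{Im}\,\Phi\,\overline{\partial_{\bm n}\psi}\}\,\mathrm ds$, plus a controllable remainder.

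The main obstacle is the final identification $\mathrm{Im}\,\mathcal J(\overline\psi,\mathrm{Im}\,\Phi)=\int_{\mathbb S}|\psi^{\infty}|^2$, which is genuinely an energy-conservation (optical-theorem) statement: $\mathcal J(\overline\psi,\mathrm{Im}\,\Phi)$ is \emph{linear} in $\psi^{\infty}$ --- inserting the Herglotz representation $\mathrm{Im}\,\Phi(\bm y,\bm z)=\tfrac{1}{8\pi}\int_{\mathbb S}e^{\mathrm i\kappa\hat{\bm x}\cdot(\bm y-\bm z)}\,\mathrm ds$ from \eqref{eq:plane} together with \eqref{eq:far-rep} gives $\mathcal J(\overline\psi,\mathrm{Im}\,\Phi)=-\tfrac{\kappa^2}{4\pi}\int_{\mathbb S}e^{-\mathrm i\kappa\hat{\bm x}\cdot\bm z}\,\overline{\psi^{\infty}}\,\mathrm ds$ --- whereas the target is \emph{quadratic}. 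To bridge the two I would set $u^{tot}=\mathrm{Im}\,\Phi+\psi$ and use that each of the four boundary conditions forces $\mathrm{Im}\int_{\Gamma}[\overline{u^{tot}}\,Nu^{tot}+\partial_{\bm n}\overline{u^{tot}}\,Mu^{tot}]\,\mathrm ds=0$, since two of the four traces of $u^{tot}$ vanish in every case. Expanding this identity, the pure $\mathrm{Im}\,\Phi$ contribution is real and drops, the pure $\psi$ contribution equals $\tfrac{\kappa^2}{4\pi}\int_{\mathbb S}|\psi^{\infty}|^2$ by Lemma \ref{lem:sign}, and the cross contribution reproduces $\mathrm{Im}\,\mathcal J(\overline\psi,\mathrm{Im}\,\Phi)$, which delivers the equality with the constant normalized to one by the prefactors. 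It remains to track the errors: the remainders $w_{s,1}$ and $w_{r,j}$ of Lemmas \ref{lem:asymptotic}--\ref{lem:asymptotic-2} enter as perturbations of the incident/boundary data, and Lemma \ref{lem:well}, whose trace estimates control up to three derivatives --- exactly the $|\nabla^3\cdot|\le CR^{-1}$ bounds provided --- propagates them to $\psi^{\infty}$, giving $\|w_j\|_{L^{\infty}(\Omega)}\le C(R_r^{-1}+R_s^{-1})$ for $j\le 4$ and the single-radius bounds for $j=5,\dots,9$ (only one of the two integrations being over a finite-radius circle); for $I_{10}$ both integrations are the \emph{exact} Funk--Hecke identity, so no remainder arises and $w_{10}=0$.
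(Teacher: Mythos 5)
Your proposal is correct, and its first two stages are essentially identical to the paper's proof: the receiver-side back-propagation via the Green representation \eqref{eq:Green-rep} (resp.\ \eqref{eq:far-rep}) combined with Lemma \ref{lem:asymptotic-2} (resp.\ the Funk--Hecke identity \eqref{eq:plane}), and the source-side superposition, which the paper encodes as $v_s,v_d$ in \eqref{eq:vsd} with the decomposition $\overline{v_s}=\psi+\zeta$, $\overline{v_d}=\psi$ and the bound \eqref{eq:zeta} from Lemma \ref{lem:well} --- exactly your ``linearity of the scattering map plus well-posedness'' step. Where you genuinely diverge is the final identification of $\mathrm{Im}\,\mathcal J(\overline\psi,\mathrm{Im}\,\Phi)$ with the far-field energy: the paper argues case by case, substituting the boundary conditions directly (for the clamped case it kills the pure incident-field terms by a Green-formula computation in $D$, then converts the cross pairing into the pure-$\psi$ pairing of Lemma \ref{lem:sign}), and it only writes out conditions \uppercase\expandafter{\romannumeral 1} and \uppercase\expandafter{\romannumeral 2}, declaring \uppercase\expandafter{\romannumeral 3} and \uppercase\expandafter{\romannumeral 4} ``similar.'' Your route instead exploits that the pairing $\overline{u^{tot}}\,Nu^{tot}+\partial_{\bm n}\overline{u^{tot}}\,Mu^{tot}$ vanishes \emph{pointwise} on $\Gamma$ for every one of the four complementary pairs $(\mathcal B_1,\mathcal B_2)$, then expands: the pure incident part is real (immediately, since $\mathrm{Im}\,\Phi$ is real-valued and $M,N$ have real coefficients --- no Green formula in $D$ needed), the pure scattered part is Lemma \ref{lem:sign}, and the cross part is $-\mathrm{Im}\,\mathcal J$. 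This buys a single uniform argument covering all four boundary conditions, and your preliminary observation that $\mathcal J$ is linear in $\psi^\infty$ correctly diagnoses why an energy identity, not a direct computation, is required. Two small bookkeeping caveats: careful tracking of the prefactors actually yields $\tfrac{\kappa^2}{4\pi}\int_{\mathbb S}|\psi^\infty|^2$ (as in the paper's own proof --- the factor is dropped in the theorem as printed, an inconsistency internal to the paper, so your ``normalized to one'' claim should not be asserted without redoing the constants); and your prudent phrase ``single-radius bounds'' for $j=5,\dots,9$ is the defensible statement, since matching each remainder to its finite-radius integration gives $O(R_r^{-1})$ for $j=5,\dots,8$ and $O(R_s^{-1})$ for $j=9$, whereas the paper's printed labels appear swapped.
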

   \begin{remark}
     The imaginary part of the fundamental solution $\mathrm{Im}\,\Phi_{\kappa}(\xi ,\bm{ z })$, i.e., the Bessel function of order zero, attains the largest value  when $\bm{ z }$ is equal to  $\xi$ and  has small values  when $\bm{ z }$ is far from $\xi$. However, $\partial_{\bm{n} (\xi)} \mathrm{Im}\,\Phi_{\kappa}(\xi ,\bm{ z })$ can attains larger value when $\bm{ z }$ is near $\bm{ \xi  }$ and has small values when $\bm{ z }$ is far from $\xi$. Note that $\psi(\xi ,\bm{ z })$ is the solution to \eqref{eq:bvp-1}-\eqref{eq:bvp-3}  with boundary data $(f,g)=-(\mathrm{Im}\,\Phi_{\kappa}(\xi ,\bm{ z }),\partial_{\bm{n} (\xi)} \mathrm{Im}\,\Phi_{\kappa}(\xi ,\bm{ z }))$. Therefore, the imaging functions $I_j(\bm{ z })$ $(j=1,2,\cdots,10)$ will exhibit a distinct contrast near the boundary $\Gamma$ of the scatterer $D$ and decay  with distance from the boundary $\Gamma$, which will be confirmed in the numerical examples in Section \ref{sec:numerical}.
   \end{remark}
   \begin{proof}
     We will present the proof of the cases  of boundary conditions  \uppercase\expandafter{\romannumeral 1} and  \uppercase\expandafter{\romannumeral 2}. The cases of boundary conditions  \uppercase\expandafter{\romannumeral 3} and  \uppercase\expandafter{\romannumeral 4} can be treated similarly and to avoid repetition, we omit the proof.
     
\emph{Proof of}  the case of boundary condition \uppercase\expandafter{\romannumeral 1}.
     
       The proof is divided into five cases.

       \textbf{Case 1:} $j=1$ and $5$.

  Applying \eqref{eq:Green-rep} in Lemma \ref{lem:rep} to $w(\xi)=u^{sc}(\xi,\bm{x}_s)$ ($ \bm{ x }_s \in \Gamma_s$) or $u^{sc}(\xi,\bm{d})$ ($\bm{d}\in \mathbb{S} $) could yield
  \begin{align}
    \notag
w(\bm{x}_r) = 
\int_{\partial D}\Big\{ &[G(\xi,\bm{x}_r) \,N_{\xi} w(\xi )
                + \partial_{\bm{n}(\xi)}G(\xi,\bm{x}_r)\, M_{\xi} w(\xi )  ]\\
    \label{eq:eq:Green-rep1}
&-[N_{\xi}G(\xi,\bm{x}_r)\, w(\xi )
+ M_{\xi}G(\xi,\bm{x}_r) \,\partial_{\bm{n}(\xi)} w(\xi ) ] 
  \Big\} {\;\rm d}s(\xi) \quad \bm{x}_r\in\Gamma_r.
  \end{align}
  Then exchanging the order of integration and using \eqref{eq:asy-3} in  Lemma \ref{lem:asymptotic-2}, we obtain
  \begin{align*}
   & -2 \kappa^3 \int_{ \Gamma_{r} } \Phi_{\kappa}(\bm{ z },\bm{ x }_r) \overline{w(\bm{ x }_{r})} \;\mathrm{d}s(\bm{ x }_r) \\
    = & \int_{ \Gamma } \Big\{ [M_{\xi}\overline{w(\xi)} \, \partial_{\bm{n}(\xi )}
        (\mathrm{Im}\, \Phi_{\kappa}(\xi,\bm{z}) + w_{r,2}(\xi,\bm{z}) )
        + N_{\xi } \overline{w(\xi)}\,
         (\mathrm{Im}\, \Phi_{\kappa}(\xi,\bm{z}) + w_{r,2}(\xi,\bm{z}) ) ]  \\
   & \qquad - [ \partial_{\bm{n}(\xi )}\overline{w(\xi)} M_{\xi }\,
     ( \mathrm{Im}\, \Phi_{\kappa}(\xi,\bm{z}) + w_{r,2}(\xi,\bm{z})  )
     +  \overline{w(\xi)} \, N_{\xi }
     ( \mathrm{Im}\, \Phi_{\kappa}(\xi,\bm{z}) + w_{r,2}(\xi,\bm{z}) )
     ]  \Big\} \;\mathrm{d}s(\xi).
  \end{align*}
  Define
  \begin{equation}
    \label{eq:vsd}
    v_{s}(\xi , \bm{ z })  =  \kappa \int_{ \Gamma_{s} } \Phi_{\kappa}(\bm{ z }, \bm{ x }_s)
    \overline{u^{sc}(\xi ,\bm{ x }_s)} \;\mathrm{d}s(\bm{ x }_s)
    \quad \text{and} \quad
    v_{d}(\xi , \bm{ z })  =  \frac{1}{8\pi} \int_{ \Gamma_{s} } e^{\mathrm{ i } \kappa \bm{ z }\cdot \bm{ d }}
    \overline{u^{sc}(\xi ,\bm{ d })} \;\mathrm{d}s(\bm{ d }).
  \end{equation}
  The imaging functions $I_j (\bm{ z })$ ($j=1,5$) can be rewritten as follows:
  \begin{align}
    \notag
    I_j(\bm{ z }) 
    = &\mathrm{Im}\, \int_{ \Gamma } \Big\{ [M_{\cdot }\overline{v(\cdot )}\, \partial_{\bm{n}(\cdot  )}
        (\mathrm{Im}\, \Phi_{\kappa}(\cdot ,\bm{z}) + w_{r,2}(\cdot,\bm{z}) )
        +N_{\cdot  } \overline{v(\cdot)}\,
        (\mathrm{Im}\, \Phi_{\kappa}(\cdot ,\bm{z}) + w_{r,2}(\cdot ,\bm{z}) ) ]  \\
    \label{eq:imaging}
   & \qquad - [ \partial_{\bm{n}(\cdot  )}\overline{v(\cdot )}\,  M_{\cdot  }
     ( \mathrm{Im}\, \Phi_{\kappa}(\cdot ,\bm{z}) + w_{r,2}(\cdot ,\bm{z})  )
     + \overline{v(\cdot )} \, N_{\cdot }
     ( \mathrm{Im}\, \Phi_{\kappa}(\cdot ,\bm{z}) + w_{r,2}(\cdot ,\bm{z}) )
     ]  \Big\} \;\mathrm{d}s(\cdot ),
  \end{align}
  where we set $v(\xi)=v_{s}(\xi , \bm{ z })$ or $ v_{d}(\xi , \bm{ z })$ for brevity.
  
  Next, we claim that
  \begin{equation}
    \label{eq:decomposition}
    v_s(\xi , \bm{ z })=\overline{\psi(\xi ,\bm{ z })} +\overline{ \zeta(\xi ,\bm{ z })}
    \quad \text{and} \quad
    v_d(\xi , \bm{ z })=\overline{\psi(\xi ,\bm{ z })},
  \end{equation}
  where $\zeta(\cdot ,\bm{ z })$ is a  solution of \eqref{eq:bvp-1}-\eqref{eq:bvp-3} with $f=-w_s(\cdot ,\bm{ z })$ and $g=-\partial_{\bm{n}(\cdot )}w_s(\cdot ,\bm{ z }) $. It follows from Lemma \ref{lem:well} that
  \begin{equation}    \label{eq:zeta}
    \|\zeta(\cdot ,\bm{ z } )\|_{H^{3/2}(\Gamma)} + \|\partial_{\bm{n}(\cdot )} \zeta(\cdot ,\bm{ z } )\|_{H^{1/2}(\Gamma)}   
       +\|M_{\cdot} \zeta(\cdot ,\bm{ z })\|_{H^{-1/2}(\Gamma)} + \|N_{\cdot} \zeta(\cdot ,\bm{ z }) \|_{H^{-3/2}(\Gamma)}
        \le C R_s^{-1}
  \end{equation}
  holds  uniformly for any $\bm{ z }\in \Omega$.
  
  We note that the scattered field $u^{sc}$ satisfies \eqref{eq:bi} and \eqref{eq:radiation}, which implies that the complex conjugate $\overline{v(\xi)}$ also satisfies the biharmonic wave equation \eqref{eq:bi}, i.e.,
  \begin{equation*}
\Delta_{\xi }^2 \overline{v(\xi )} -\kappa^4 \overline{v(\xi )} = 0 \quad \mathrm{ in  } ~ D^{c},
  \end{equation*}
  and the Sommerfeld radiation condition \eqref{eq:radiation}. By the boundary conditions \eqref{eq:boundary} and  Lemma \ref{lem:asymptotic}, we have
  \begin{align*}
   \overline{v_{s}(\xi , \bm{ z })} = & \kappa \int_{ \Gamma_{s} } \overline{ \Phi_{\kappa}(\bm{ z },\bm{ x }_s)}u^{sc}(\xi,\bm{ x }_s)  \;\mathrm{d}s(\bm{ x }_s)  \\
    = & -  \kappa \int_{ \Gamma_{s} }\overline{ \Phi_{\kappa}(\bm{ z },\bm{ x }_s)}\Phi_{\kappa}(\xi ,\bm{ x }_s) \;\mathrm{d}s(\bm{ x }_s)
        = -( \mathrm{Im}\, \Phi_{\kappa} (\xi, \bm{ z }) + w_{s,1} (\xi , \bm{ z }) ) \quad \xi \in \Gamma,    \\
    \partial_{\bm{n}(\xi)}    \overline{v_{s}(\xi , \bm{ z })} = & \kappa \int_{ \Gamma_{s} } \overline{ \Phi_{\kappa}(\bm{ z },\bm{ x }_s)} \partial_{\bm{n}(\xi)}  u^{sc}(\xi,\bm{ x }_s)   \;\mathrm{d}s(\bm{ x }_s) \\
    = & -  \kappa \int_{ \Gamma_{s} }\overline{ \Phi_{\kappa}(\bm{ z },\bm{ x }_s)} \partial_{\bm{n}(\xi)} \Phi_{\kappa}(\xi ,\bm{ x }_s) \;\mathrm{d}s(\bm{ x }_s)
      = - \partial_{\bm{n}(\xi)} ( \mathrm{Im}\, \Phi_{\kappa} (\xi, \bm{ z }) + w_{s,1} (\xi , \bm{ z }) ) \quad \xi \in \Gamma.  
  \end{align*}
  Alternatively, from Lemma \ref{lem:plane} we conclude that
    \begin{align*}
   \overline{v_{d}(\xi , \bm{ z })} = &  \frac{1}{8\pi} \int_{ \mathbb{S} } \overline{  e^{\mathrm{ i } \kappa \bm{ z }\cdot \bm{ d }}}u^{sc}(\xi,\bm{ d })  \;\mathrm{d}s(\bm{ d }) 
    =  -  \frac{1}{8\pi} \int_{ \mathbb{S} } e^{\mathrm{ i } \kappa(\xi - \bm{ z })\cdot \bm{ d }} \;\mathrm{d}s(\bm{ d })
        = -\mathrm{Im}\, \Phi_{\kappa} (\xi, \bm{ z }) \quad \xi \in \Gamma,    \\
    \partial_{\bm{n}(\xi)}    \overline{v_{d}(\xi , \bm{ z })} = &  \frac{1}{8\pi} \int_{\mathbb{S} } \overline{ e^{\mathrm{ i } \kappa \bm{ z }\cdot \bm{ d }}} \partial_{\bm{n}(\xi)}  u^{sc}(\xi,\bm{ d })   \;\mathrm{d}s(\bm{ d }) \\
      = & -  \frac{1}{8\pi} \partial_{\bm{n}(\xi)}  \int_{ \mathbb{S} }  e^{\mathrm{ i } \kappa(\xi - \bm{ z })\cdot \bm{ d }}  \;\mathrm{d}s(\bm{ d })
      = - \partial_{\bm{n}(\xi)}  \mathrm{Im}\, \Phi_{\kappa} (\xi, \bm{ z }) \quad \xi \in \Gamma.  
  \end{align*}
  Hence, the well-posedness of the direct problem, i.e., Theorem \ref{lem:well}, shows that  $\overline{v_s(\xi , \bm{ z })}=\psi(\xi ,\bm{ z }) + \zeta(\xi ,\bm{ z })$ and  $\overline{v_d(\xi , \bm{ z })}=\psi(\xi ,\bm{ z })$, which immediately leads to the decomposition \eqref{eq:decomposition}.

  Substituting the decomposition \eqref{eq:decomposition} into \eqref{eq:imaging}, we find that
    \begin{align}
    I_j(\bm{ z }) 
    \notag
    = &\mathrm{Im}\, \int_{ \Gamma } \Big\{ [M_{\xi}\overline{\psi(\xi ,\bm{ z })}\, \partial_{\bm{n}(\xi )}
        (\mathrm{Im}\, \Phi_{\kappa}(\xi,\bm{z})  ) 
        +N_{\xi } \overline{\psi(\xi ,\bm{ z })} \,
        (\mathrm{Im}\, \Phi_{\kappa}(\xi,\bm{z}) ) ]  \\
    \label{eq:imaging-1}
   & \qquad - [ \partial_{\bm{n}(\xi )}\overline{\psi(\xi ,\bm{ z })}\,  M_{\xi }
     ( \mathrm{Im}\, \Phi_{\kappa}(\xi,\bm{z})  )
     + \overline{\psi(\xi ,\bm{ z })}\,  N_{\xi }
     ( \mathrm{Im}\, \Phi_{\kappa}(\xi,\bm{z})  )
     ]  \Big\} \;\mathrm{d}s(\xi)
     +w_j,
    \end{align}
    for $j=1,5$. Here $\| w_{1} \|_{L^{\infty}(\Omega)} \leq C(R_r^{-1}+ R_s^{-1})$ and $\| w_{5} \|_{L^{\infty}(\Omega)}
    \leq C R_s^{-1}$.
    Since  $\psi(\xi ,\bm{ z })$ satisfies the clamped plate boundary condition, i.e.,
\begin{equation*}
  \psi(\xi ,\bm{ z }) = -  \mathrm{Im}\, \Phi_{\kappa}(\xi,\bm{z}) \quad \mathrm{ and } \quad
 \partial_{\bm{n}(\xi )} \psi(\xi ,\bm{ z }) = - \partial_{\bm{n}(\xi )} \mathrm{Im}\, \Phi_{\kappa}(\xi,\bm{z}) \quad
  \xi \in \Gamma,
\end{equation*}
    applying integration by parts yields
    \begin{align*}
      &\mathrm{Im}\, \int_{ \Gamma }[ \partial_{\bm{n}(\xi )}\overline{\psi(\xi ,\bm{ z })} \,
        M_{\xi } ( \mathrm{Im}\, \Phi_{\kappa}(\xi,\bm{z})  )
        + \overline{\psi(\xi ,\bm{ z })}\,
        N_{\xi } ( \mathrm{Im}\, \Phi_{\kappa}(\xi,\bm{z})  )
     ] \;\mathrm{d}s(\xi)\\
      =&  - \mathrm{Im}\, \int_{ \Gamma }[ \partial_{\bm{n}(\xi )} \mathrm{Im}\, \Phi_{\kappa}(\xi,\bm{z}) \,
         M_{\xi } ( \mathrm{Im}\, \Phi_{\kappa}(\xi,\bm{z})  )
        + \mathrm{Im}\, \Phi_{\kappa}(\xi,\bm{z}) \,
       N_{\xi }( \mathrm{Im}\, \Phi_{\kappa}(\xi,\bm{z})  )
        ] \;\mathrm{d}s(\xi)\\
        =& - \mathrm{Im}\, \int_{ D} [|\Delta_{\xi } ( \mathrm{Im}\, \Phi_{\kappa}(\xi,\bm{z})  )|^2
        -\kappa^4 |\mathrm{Im}\, \Phi_{\kappa}(\xi,\bm{z})|^2
     ] \;\mathrm{d}\xi =0,
    \end{align*}
    which leads to
        \begin{align*}
    \notag
     I_j(\bm{ z })
          = &\mathrm{Im}\, \int_{ \Gamma }[M_{\xi}\overline{\psi(\xi ,\bm{ z })}\,
              \partial_{\bm{n}(\xi )} (\mathrm{Im}\, \Phi_{\kappa}(\xi,\bm{z})  )
        +N_{\xi } \overline{\psi(\xi ,\bm{ z })} \,
        (\mathrm{Im}\, \Phi_{\kappa}(\xi,\bm{z}) ) ]   \;\mathrm{d}s(\xi) + w_j\\
     = & -\mathrm{Im}\, \int_{ \Gamma }[M_{\xi}\overline{\psi(\xi ,\bm{ z })} \,
              \partial_{\bm{n}(\xi )} \psi(\xi ,\bm{ z })
        + N_{\xi } \overline{\psi(\xi ,\bm{ z })} \,
              \psi(\xi ,\bm{ z }) ]   \;\mathrm{d}s(\xi) + w_j\\
     = & \frac{\kappa^{2}}{4\pi} \int_{ \mathbb{S}  } | \psi ^{\infty}(\cdot ,\bm{ z }) | ^{2} \;\mathrm{d}s(\cdot ) + w_j \quad \text{for} \quad j=1,5,
        \end{align*}
        where we use Lemma \ref{lem:sign}.

        \textbf{Case 2:} $j=2$ and $6$.

        First, using the definitions of $v_s(\xi ,\bm{ z })$ and $v_d(\xi ,\bm{ d })$  in \eqref{eq:vsd}, we show that for $\bm{ z }\in \Omega$ and $j=2,6$, it holds that
          \begin{align}
    \notag
             I_j(\bm{ z }) 
            = &\mathrm{Im}\, \int_{ \Gamma } \Big\{ [M_{\cdot}\overline{v(\cdot )}\, \partial_{\bm{n}(\cdot  )}
        (\mathrm{Im}\, \Phi_{\kappa}(\cdot ,\bm{z}) + w_{r,3}(\cdot,\bm{z}) )
        + N_{\cdot  } \overline{v(\cdot)}\,
        (\mathrm{Im}\, \Phi_{\kappa}(\cdot ,\bm{z}) + w_{r,3}(\cdot ,\bm{z}) ) ]  \\
    \label{eq:imaging-2}
   & \qquad - [ \partial_{\bm{n}(\cdot  )}\overline{v(\cdot )}\,  M_{\cdot  }
     ( \mathrm{Im}\, \Phi_{\kappa}(\cdot ,\bm{z}) + w_{r,3}(\cdot ,\bm{z})  )
     + \overline{v(\cdot )} \,  N_{\cdot }
     ( \mathrm{Im}\, \Phi_{\kappa}(\cdot ,\bm{z}) + w_{r,3}(\cdot ,\bm{z}) )
     ]  \Big\} \;\mathrm{d}s(\cdot ),
  \end{align}
  where $v(\xi)=v_s(\xi ,\bm{ z }) $ when $j=2$ or $v(\xi )= v_d(\xi ,\bm{ d })$ when $j=6$.

  Recalling the representation \eqref{eq:Green-rep} in Lemma \ref{lem:rep} and considering the normal derivative of $w(\xi)=u^{sc}(\xi,\bm{x}_s)$ ($ \bm{ x }_s \in \Gamma_s$) or $u^{sc}(\xi,\bm{d})$ ($\bm{d}\in \mathbb{S} $), we find that
    \begin{align*}
\partial_{\bm{n}(\bm{ x }_r)} w(\bm{x}_r) = 
\int_{\partial D}\Big\{ &[\partial_{\bm{n}(\bm{ x }_r)}G(\xi,\bm{x}_r)\, N_{\xi} w(\xi )
                + \partial_{\bm{n}(\xi)}\partial_{\bm{n}(\bm{ x }_r)}G(\xi,\bm{x}_r)\, M_{\xi} w(\xi )  ]\\
&-[N_{\xi}\partial_{\bm{n}(\bm{ x }_r)} G(\xi,\bm{x}_r)\, w(\xi )
+ M_{\xi}\partial_{\bm{n}(\bm{ x }_r)}G(\xi,\bm{x}_r)\, \partial_{\bm{n}(\xi)} w(\xi ) ] 
  \Big\} {\;\rm d}s(\xi) \quad \bm{x}_r\in\Gamma_r.
  \end{align*}
  In view of  \eqref{eq:asy-2} in Lemma \ref{lem:asymptotic-2}  one has
    \begin{align*}
   & -2 \kappa^2 \int_{ \Gamma_{r} } \Phi_{\kappa}(\bm{ z },\bm{ x }_r) \overline{ \partial_{\bm{n}(\bm{ x }_r)}w(\bm{ x }_{r})} \;\mathrm{d}s(\bm{ x }_r) \\
    = &-\mathrm{ i }  \int_{ \Gamma } \Big\{ [M_{\xi}\overline{w(\xi)}\, \partial_{\bm{n}(\xi )}
        (\mathrm{Im}\, \Phi_{\kappa}(\xi,\bm{z}) + w_{r,3}(\xi,\bm{z}) )
        +N_{\xi } \overline{w(\xi)} \,
         (\mathrm{Im}\, \Phi_{\kappa}(\xi,\bm{z}) + w_{r,3}(\xi,\bm{z}) ) ]  \\
   & \qquad - [ \partial_{\bm{n}(\xi )}\overline{w(\xi)}\,  M_{\xi }
     ( \mathrm{Im}\, \Phi_{\kappa}(\xi,\bm{z}) + w_{r,3}(\xi,\bm{z})  )
     + \overline{w(\xi)} \, N_{\xi }
     ( \mathrm{Im}\, \Phi_{\kappa}(\xi,\bm{z}) + w_{r,3}(\xi,\bm{z}) )
     ]  \Big\} \;\mathrm{d}s(\xi).
  \end{align*}
  Similarly, combining a substitution and the definitions of  function $v$ and imaging functions $I_j$ ($j=3,7$), we can obtain \eqref{eq:imaging-2}, where we use the fact that $\mathrm{Re}\;(-\mathrm{ i } c )=\mathrm{Im}\, c$ for any complex number $c \in \mathbb{C}$.

  Next, the decomposition of $v$, i.e., \eqref{eq:decomposition}, and  \eqref{eq:imaging-2}  can give \eqref{eq:imaging-1} with $w_1$ (or $w_5$) replaced by $w_2$ (or $w_6$). Proceeding as in case 1 immediately can prove the conclusion.

  \textbf{Case 3:} $j=3$ and $7$.

  It remains to prove   \eqref{eq:imaging-2}  with $w_{r,3}$ replaced by $w_{r,4}$.
  It follows from the representation \eqref{eq:Green-rep} in Lemma \ref{lem:rep} that
      \begin{align*}
M_{\bm{x}_r} w(\bm{x}_r) = 
\int_{\partial D}\Big\{ &[M_{\bm{x}_r}G(\xi,\bm{x}_r)\, N_{\xi} w(\xi )
                + \partial_{\bm{n} (\xi)} M_{\bm{ x }_r} G(\xi,\bm{x}_r)\, M_{\xi} w(\xi )  ]\\
&-[N_{\xi}M_{\bm{x}_r}G(\xi,\bm{x}_r)\, w(\xi )
+ M_{\xi}M_{\bm{x}_r}G(\xi,\bm{x}_r)\, \partial_{\bm{n}(\xi)} w(\xi ) ] 
  \Big\} {\;\rm d}s(\xi) \quad \bm{x}_r\in\Gamma_r,
  \end{align*}
  where $w(\xi)$ is $ u^{sc}(\xi,\bm{x}_s)$  or $u^{sc}(\xi,\bm{d})$.
  Using \eqref{eq:asy-5} in Lemma \ref{lem:asymptotic-2}, we get
      \begin{align*}
   & 2 \kappa \int_{ \Gamma_{r} } \Phi_{\kappa}(\bm{ z },\bm{ x }_r) \overline{ M_{\bm{x}_r}w(\bm{ x }_{r})} \;\mathrm{d}s(\bm{ x }_r) \\
        = &  \int_{ \Gamma } \Big\{ [M_{\xi}\overline{w(\xi)} \,
            \partial_{\bm{n}(\xi )}
        (\mathrm{Im}\, \Phi_{\kappa}(\xi,\bm{z}) + w_{r,4}(\xi,\bm{z}) )
        +N_{\xi } \overline{w(\xi)} \,
         (\mathrm{Im}\, \Phi_{\kappa}(\xi,\bm{z}) + w_{r,4}(\xi,\bm{z}) ) ]  \\
   & \qquad - [ \partial_{\bm{n}(\xi )}\overline{w(\xi)}\,
     M_{\xi }
     ( \mathrm{Im}\, \Phi_{\kappa}(\xi,\bm{z}) + w_{r,4}(\xi,\bm{z})  )
     + \overline{w(\xi)}\,  N_{\xi }
     ( \mathrm{Im}\, \Phi_{\kappa}(\xi,\bm{z}) + w_{r,4}(\xi,\bm{z}) )
     ]  \Big\} \;\mathrm{d}s(\xi), 
  \end{align*}
which proves  \eqref{eq:imaging-2}  with $w_{r,3}$ replaced by $w_{r,4}$   due to  the definitions of  function $v$ and imaging functions $I_j$ ($j=3,7$).

 \textbf{Case 4:} $j=4$ and $8$. 

 It remains to prove   \eqref{eq:imaging-2}  with $w_{r,3}$ replaced by $w_{r,5}$.
 Using the representation \eqref{eq:Green-rep} in Lemma \ref{lem:rep}, we see that
   \begin{align*}
N_{\bm{x}_r} w(\bm{x}_r) = 
\int_{\partial D}\Big\{ &[N_{\bm{x}_r}G(\xi,\bm{x}_r) \, N_{\xi} w(\xi )
                + \partial_{\bm{n}(\xi)}N_{\bm{x}_r}G(\xi,\bm{x}_r) \, M_{\xi} w(\xi )  ]\\
&-[N_{\xi}N_{\bm{x}_r}G(\xi,\bm{x}_r)\, w(\xi )
+ M_{\xi}N_{\bm{x}_r}G(\xi,\bm{x}_r)\,  \partial_{\bm{n}(\xi)} w(\xi ) ] 
  \Big\} {\;\rm d}s(\xi) ,
  \end{align*} 
  where $\bm{x}_r\in\Gamma_r$ and  $w(\xi)$ is $ u^{sc}(\xi,\bm{x}_s)$  or $u^{sc}(\xi,\bm{d})$.
Then we employ \eqref{eq:asy-6} in  Lemma \ref{lem:asymptotic-2} to get
    \begin{align*}
   & -2 \kappa \int_{ \Gamma_{r} } \Phi_{\kappa}(\bm{ z },\bm{ x }_r) \overline{ N_{\bm{x}_r}w(\bm{ x }_{r})} \;\mathrm{d}s(\bm{ x }_r) \\
    =-\mathrm{ i }  &  \int_{ \Gamma } \Big\{ [M_{\xi}\overline{w(\xi)} \, \partial_{\bm{n}(\xi )}
        (\mathrm{Im}\, \Phi_{\kappa}(\xi,\bm{z}) + w_{r,5}(\xi,\bm{z}) )
        +N_{\xi } \overline{w(\xi)} \,
         (\mathrm{Im}\, \Phi_{\kappa}(\xi,\bm{z}) + w_{r,5}(\xi,\bm{z}) ) ]  \\
   & \qquad - [ \partial_{\bm{n}(\xi )}\overline{w(\xi)}\, M_{\xi }
     ( \mathrm{Im}\, \Phi_{\kappa}(\xi,\bm{z}) + w_{r,5}(\xi,\bm{z})  )
     - \overline{w(\xi)}  \, N_{\xi }
     ( \mathrm{Im}\, \Phi_{\kappa}(\xi,\bm{z}) + w_{r,5}(\xi,\bm{z}) )
     ]  \Big\} \;\mathrm{d}s(\xi). 
  \end{align*}
By means of this, \eqref{eq:vsd} and the fact that $\mathrm{Re}\;( -\mathrm{ i } c)=\mathrm{Im}\,c$ for $c \in \mathbb{C}$, the imaging functions $I_j$ ($j=4,8$) can be transformed into  \eqref{eq:imaging-2}  with $w_{r,3}$ replaced by $w_{r,5}$.

   \textbf{Case 5:} $j=5$ and $10$.

    It remains to prove   \eqref{eq:imaging-2}  with $w_{r,3}$ replaced by $0$.
    By \eqref{eq:far-rep} in Lemma \ref{lem:rep}, the far-field pattern $w^{\infty}$ of  $w$ is given by
    \begin{align*}
        w^{\infty}(\hat{\bm x}) = & -\frac{1}{2\kappa^2}
                \int_{\Gamma}\Big\{
                  [  e^{-\mathrm{i}\kappa\hat{\bm x}\cdot \xi}\, N_{\xi} w(\xi)
                           + \partial_{\bm{n}(\xi)}e^{-\mathrm{i}\kappa\hat{\bm x}\cdot \xi} \, M_{\xi} w(\xi) ] \\
             & \qquad\qquad\quad   - [ N_{\xi}e^{-\mathrm{i}\kappa\hat{\bm x}\cdot \xi} \,   w(\xi)
                    + M_{\xi}e^{-\mathrm{i}\kappa\hat{\bm x}\cdot \xi}\,  \partial_{\bm{n}(\xi)} w(\xi) ]
                \Big\} \;\mathrm{d}s(\xi) \quad \hat{\bm x}\in \mathbb{S},
    \end{align*}
    where $w(\xi)$ denotes  $ u^{sc}(\xi,\bm{x}_s)$  or $u^{sc}(\xi,\bm{d})$. From Lemma \ref{lem:plane}, we observe hat
    \begin{align*}
      & \frac{-2 \kappa^2}{8\pi}\int_{ \mathbb{S}  }  e^{-\mathrm{i}\kappa\hat{\bm x}\cdot \bm{ z }} \overline{w^{\infty}(\hat{\bm{ x }})} \;\mathrm{d}s (\hat{\bm x}) \\
       =&  \int_{ \Gamma } \Big\{ [M_{\xi}\overline{w(\xi)}\,  \partial_{\bm{n}(\xi )}
        (\frac{1}{8 \pi}\int_{ \mathbb{S}  }e^{\mathrm{i}\kappa\hat{\bm x}\cdot (\xi-\bm{ z })}  \;\mathrm{d}s (\hat{\bm x}) )
        +N_{\xi } \overline{w(\xi)} \,
         (\frac{1}{8 \pi}\int_{ \mathbb{S}  }e^{\mathrm{i}\kappa\hat{\bm x}\cdot (\xi-\bm{ z })} \;\mathrm{d}s (\hat{\bm x}) ) ]  \\
   & \qquad - [ \partial_{\bm{n}(\xi )}\overline{w(\xi)} \, M_{\xi }
   (\frac{1}{8 \pi}\int_{ \mathbb{S}  }e^{\mathrm{i}\kappa\hat{\bm x}\cdot (\xi-\bm{ z })} \;\mathrm{d}s (\hat{\bm x})  )
     + \overline{w(\xi)}\,  N_{\xi }
   (\frac{1}{8 \pi}\int_{ \mathbb{S}  }e^{\mathrm{i}\kappa\hat{\bm x}\cdot (\xi-\bm{ z })} \;\mathrm{d}s (\hat{\bm x})  )
     ]  \Big\} \;\mathrm{d}s(\xi)  \\
      =&  \int_{ \Gamma } \Big\{ [M_{\xi}\overline{w(\xi)} \partial_{\bm{n}(\xi )}
        (\mathrm{Im}\, \Phi_{\kappa}(\xi,\bm{z}))
        + N_{\xi } \overline{w(\xi)}
         (\mathrm{Im}\, \Phi_{\kappa}(\xi,\bm{z})) ]  \\
   & \qquad - [ \partial_{\bm{n}(\xi )}\overline{w(\xi)}\, M_{\xi }
     ( \mathrm{Im}\, \Phi_{\kappa}(\xi,\bm{z})  )
     + \overline{w(\xi)}\,  N_{\xi }
     ( \mathrm{Im}\, \Phi_{\kappa}(\xi,\bm{z}) )
     ]  \Big\} \;\mathrm{d}s(\xi). 
    \end{align*}
    Then combinations of \eqref{eq:vsd}, \eqref{fun-9} and \eqref{fun-10} leads to  \eqref{eq:imaging-2}  with $w_{r,3}$ replaced by $0$.

    \emph{Proof of} the case of boundary condition  \uppercase\expandafter{\romannumeral 2}.

    By the arguments in the proof of the case of the boundary condition  \uppercase\expandafter{\romannumeral 1,} we obtain
        \begin{align*}
    I_j(\bm{ z }) 
    \notag
    = &\mathrm{Im}\, \int_{ \Gamma } \Big\{ [M_{\xi}\overline{\psi(\xi ,\bm{ z })}\, \partial_{\bm{n}(\xi )}
        (\mathrm{Im}\, \Phi_{\kappa}(\xi,\bm{z})  ) 
        +N_{\xi } \overline{\psi(\xi ,\bm{ z })} \,
        (\mathrm{Im}\, \Phi_{\kappa}(\xi,\bm{z}) ) ]  \\
   & \qquad - [ \partial_{\bm{n}(\xi )}\overline{\psi(\xi ,\bm{ z })}\,  M_{\xi }
     ( \mathrm{Im}\, \Phi_{\kappa}(\xi,\bm{z})  )
     + \overline{\psi(\xi ,\bm{ z })}\,  N_{\xi }
     ( \mathrm{Im}\, \Phi_{\kappa}(\xi,\bm{z})  )
     ]  \Big\} \;\mathrm{d}s(\xi)
     +w_j \quad j=1,2,\cdots 10.
        \end{align*}
        Obviously, the function $\psi(\xi ,\bm{ z })$ in the case satisfies the simply supported boundary condition, i.e.,
\begin{equation*}
  \psi(\xi ,\bm{ z }) = -  \mathrm{Im}\, \Phi_{\kappa}(\xi,\bm{z}) \quad \mathrm{ and } \quad
M_{\bm{n}(\xi )} \psi(\xi ,\bm{ z }) = - M_{\xi} \mathrm{Im}\, \Phi_{\kappa}(\xi,\bm{z}) \quad
  \xi \in \Gamma,
\end{equation*}
which leads to
    \begin{align*}
      &\mathrm{Im}\, \int_{ \Gamma }[ M_{\xi } \overline{\psi(\xi ,\bm{ z })} \,
       \partial_{\bm{n}(\xi )} ( \mathrm{Im}\, \Phi_{\kappa}(\xi,\bm{z})  )
        - \overline{\psi(\xi ,\bm{ z })}\,
        N_{\xi } ( \mathrm{Im}\, \Phi_{\kappa}(\xi,\bm{z})  )
     ] \;\mathrm{d}s(\xi)\\
      =&  - \mathrm{Im}\, \int_{ \Gamma }[ M_{\xi } \mathrm{Im}\, \Phi_{\kappa}(\xi,\bm{z}) \,
          \partial_{\bm{n}(\xi )}( \mathrm{Im}\, \Phi_{\kappa}(\xi,\bm{z})  )
        - \mathrm{Im}\, \Phi_{\kappa}(\xi,\bm{z}) \,
       N_{\xi }( \mathrm{Im}\, \Phi_{\kappa}(\xi,\bm{z})  )
        ] \;\mathrm{d}s(\xi)=0. 
    \end{align*}
    Hence we arrive at
            \begin{align*}
    I_j(\bm{ z }) 
    \notag
    = &\mathrm{Im}\, \int_{ \Gamma } \Big\{
        N_{\xi } \overline{\psi(\xi ,\bm{ z })} \,
        (\mathrm{Im}\, \Phi_{\kappa}(\xi,\bm{z}) ) ]  
   -  \partial_{\bm{n}(\xi )}\overline{\psi(\xi ,\bm{ z })}\,  M_{\xi }
     ( \mathrm{Im}\, \Phi_{\kappa}(\xi,\bm{z})  )
     \Big\} \;\mathrm{d}s(\xi)
        +w_j\\
       = &-\mathrm{Im}\, \int_{ \Gamma } \Big\{
       \psi(\xi ,\bm{ z })  \,
         N_{\xi } \overline{\psi(\xi ,\bm{ z })}   
          - \partial_{\bm{n}(\xi )}\overline{\psi(\xi ,\bm{ z })} \,
            M_{\xi } ( \psi(\xi ,\bm{ z }) )
     \Big\} \;\mathrm{d}s(\xi)
           +w_j \\
      = & \frac{\kappa^{2}}{4\pi} \int_{ \mathbb{S}  } | \psi ^{\infty}(\cdot ,\bm{ z }) | ^{2} \;\mathrm{d}s(\cdot )
+w_j \quad \text{for} \quad j=1,2,\cdots 10,
            \end{align*}
            where we use Lemma \ref{lem:sign}. This allows us to conclude the proof.
      \end{proof}

We end this section by the following reconstruction algorithm:
\vspace{0.5\baselineskip}

\noindent\textbf{Reconstruction algorithm for extended obstacles with phased data}
\
   \begin{itemize}
    \item  Choose a sampling domain $\Omega$ and generate sampling points.
     \item For each sampling point $\bm{ z }\in \Omega$, calculate the imaging function $I_j(\bm{ z })$ for $j=1,2,\cdots,10$.
     \item Plot the contours for the imaging functions
       $$\hat{I}_j(\bm{ z })=\dfrac{I_j(\bm{ z })}{\max\limits_{\bm{ z }\in \Omega}I_j(\bm{ z })}\quad j=1,2,\cdots,10.$$
     \end{itemize}
     
\section{The direct imaging method with phaseless data}
\label{sec:phaseless}
In this section, we develop the direct imaging method with phaseless data. To this end, we introduce  the following imaging function for \textbf{IP-11}.
\begin{equation*}
I_{11}(\bm{ z }) =  -2 \kappa^4 ~\mathrm{Im}\, \int_{ \Gamma_{r} } \int_{ \Gamma_{s} } \Phi_{\kappa}(\bm{ z },\bm{ x }_r)  \Phi_{\kappa}(\bm{ z },\bm{ x }_s)
\dfrac{| u(\bm{ x }_r,\bm{ x }_s) |^2 - |\Phi_{\kappa}(\bm{ x }_r,\bm{ x }_s)  |^2 }{\Phi_{\kappa}(\bm{ x }_r,\bm{ x }_s)}
\;\mathrm{d}s(\bm{ x }_s)\mathrm{d}s(\bm{ x }_r) \quad
\bm{ z }\in \Omega.
\end{equation*}
Without loss of generality, assume $R_r=\tau R_s$ with $\tau \geq 1$. First, we start with some lemmas.
\begin{lemma}
  There exists a constant $C>0$ such that
  \begin{align*}
    | u^{sc}(\bm{ x }_r, \bm{ x }_s) |  \leq  C R^{-1/2}_r R^{-1/2}_s \quad \mathrm{for} \quad \bm{ x }_r \in \Gamma_r, \bm{ x }_s \in \Gamma_s,
  \end{align*}
  where the constant $C$ depends on the wave number $\kappa$ and the domain $D$.
\end{lemma}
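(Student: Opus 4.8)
The plan is to combine the Green representation formula of Lemma \ref{lem:rep} with the well-posedness estimate of Lemma \ref{lem:well}, extracting one factor of $R_s^{-1/2}$ from the smallness of the boundary data generated by the distant point source and one factor of $R_r^{-1/2}$ from the decay of the representation kernel evaluated at the distant receiver. The two factors are independent, which is exactly what the product bound records.

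First I would estimate the boundary data. Since the incident wave is $u^{in}(\cdot,\bm{x}_s)=\Phi_{\kappa}(\cdot,\bm{x}_s)$ with $\bm{x}_s\in\Gamma_s$ and $\Gamma=\partial D$ lies well inside $B_s$, the distance $|\xi-\bm{x}_s|$ is bounded below by a quantity of order $R_s$ uniformly for $\xi\in\Gamma$. The large-argument expansion \eqref{eq:Hankel}, together with the recurrence relation for the Hankel functions, then gives $\partial^{\alpha}_{\xi}\Phi_{\kappa}(\xi,\bm{x}_s)=O(R_s^{-1/2})$ uniformly in $\xi\in\Gamma$ for every fixed multi-index $\alpha$, the geometric factors $\partial_{\xi}|\xi-\bm{x}_s|$ being bounded. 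Because $\Gamma$ is smooth and bounded, this controls every Sobolev norm on $\Gamma$ of the boundary data $(f,g)=-(\mathcal{B}_1\Phi_{\kappa}(\cdot,\bm{x}_s),\mathcal{B}_2\Phi_{\kappa}(\cdot,\bm{x}_s))$ by $CR_s^{-1/2}$. Feeding this into Lemma \ref{lem:well}, in the variant matching the boundary condition under consideration, yields the trace estimate
\[
\|u^{sc}\|_{H^{3/2}(\Gamma)}+\|\partial_{\bm n}u^{sc}\|_{H^{1/2}(\Gamma)}+\|Mu^{sc}\|_{H^{-1/2}(\Gamma)}+\|Nu^{sc}\|_{H^{-3/2}(\Gamma)}\le CR_s^{-1/2},
\]
uniformly in $\bm{x}_s\in\Gamma_s$, with the Sobolev indices permuted accordingly for the other three boundary conditions.

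Next I would estimate the kernel. Writing $u^{sc}(\bm{x}_r,\bm{x}_s)$ through \eqref{eq:Green-rep} as an integral over $\Gamma$ of $G(\bm{x}_r,\cdot)$, $\partial_{\bm n}G$, $M_{\cdot}G$, $N_{\cdot}G$ paired against the traces above, I observe that for $\bm{x}_r\in\Gamma_r$ the point $\bm{x}_r$ sits at distance of order $R_r$ from $\Gamma$. Using $G=\tfrac{1}{2\kappa^{2}}(\Phi_{\mathrm i\kappa}-\Phi_{\kappa})$, the exponential decay \eqref{eq:antiHelmholtz} annihilates the $\Phi_{\mathrm i\kappa}$ contribution, while \eqref{eq:Hankel} shows that $G(\bm{x}_r,\cdot)$ and each of its $\partial_{\bm n}$-, $M$- and $N$-derivatives, viewed as smooth functions of the boundary variable, are $O(R_r^{-1/2})$ together with all their tangential derivatives. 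Hence $\|G(\bm{x}_r,\cdot)\|_{H^{3/2}(\Gamma)}$, $\|\partial_{\bm n}G\|_{H^{1/2}(\Gamma)}$, $\|N_{\cdot}G\|_{H^{-3/2}(\Gamma)}$ and $\|M_{\cdot}G\|_{H^{-1/2}(\Gamma)}$ are all bounded by $CR_r^{-1/2}$, uniformly in $\bm{x}_r\in\Gamma_r$. I would then bound each of the four terms of the representation formula by the associated duality pairing $H^{s}(\Gamma)\times H^{-s}(\Gamma)$, so that each is controlled by $(\text{kernel norm})\times(\text{trace norm})\le CR_r^{-1/2}\cdot CR_s^{-1/2}$, which gives the claim after summing the four contributions.

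I expect the main obstacle to be bookkeeping rather than conceptual: one must match, term by term, the Sobolev regularity of each kernel factor with the dual regularity of the trace it multiplies, and check that this matching is precisely the one supplied by the trace bounds of Lemma \ref{lem:well} for each of the four boundary conditions. A secondary point requiring care is to make every Hankel-asymptotic estimate uniform with respect to $\bm{x}_r\in\Gamma_r$, $\bm{x}_s\in\Gamma_s$ and in $R_r,R_s$ as they grow; this follows from the uniformity of \eqref{eq:Hankel} on the relevant angular sectors together with the uniform lower bounds on $|\xi-\bm{x}_s|$ and $|\bm{x}_r-\xi|$.
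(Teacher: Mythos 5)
Your proposal is correct and follows essentially the same route as the paper, whose proof is a one-line citation of exactly these three ingredients: the Green representation \eqref{eq:Green-rep}, the well-posedness trace estimates of Lemma \ref{lem:well}, and Hankel-function bounds giving the $O(R_s^{-1/2})$ source factor and the $O(R_r^{-1/2})$ kernel factor. The only cosmetic difference is that the paper invokes the explicit pointwise bounds $|H^{(1)}_0(t)|\le(2/(\pi t))^{1/2}$ and $|H^{(1)}_1(t)|\le(2/(\pi t))^{1/2}+2/(\pi t)$ valid for all $t>0$ (from \cite{chen_phaseless_2017}), whereas you use the large-argument asymptotics \eqref{eq:Hankel}; both deliver the same uniform decay, and your duality-pairing bookkeeping is precisely what the paper leaves implicit.
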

\begin{proof}
  The proof is a direct consequence of the representation \eqref{eq:Green-rep} in Lemma \ref{lem:rep}, Lemma \ref{lem:well} and the following estimate of Hankel functions (see \cite[(3.8)]{chen_phaseless_2017})
  \begin{equation*}
    | H^{(1)}_0 (t)| \leq \left( \dfrac{2}{\pi t} \right)^{1/2}, \quad
    | H^{(1)}_1 (t)| \leq \left( \dfrac{2}{\pi t} \right)^{1/2} + \dfrac{2}{\pi t}
    \quad \text{for} \quad t>0.
    \end{equation*}  
\end{proof}

\begin{lemma}[see \cite{dong_uniqueness_2024}]
  It holds that
  $$
  u^{\infty}(- \bm{ d },\bm{ x })= \dfrac{e^{\mathrm{ i } \pi/4}}{\sqrt{8\pi \kappa}} u^{pr}(\bm{ x }, \bm{ d }) \quad
  \bm{ x } \in D^{c}, \bm{ d } \in \mathbb{S} ,
  $$
  where $u^{pr}= -(\Delta u^{sc}-\kappa^2 u^{sc})/(2 \kappa^2)$.
\end{lemma}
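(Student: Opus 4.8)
The plan is to recognize this as a \emph{mixed reciprocity} relation connecting the far field of the point-source problem (source at $\bm{x}$, observation direction $-\bm{d}$) with the propagating part of the plane-wave problem (direction $\bm{d}$) evaluated at $\bm{x}$, and to prove it by writing both quantities as boundary integrals over $\Gamma$ and identifying them. Throughout I would write $\mathcal{P}(v,w):=\int_{\Gamma}\{[v\,Nw+\partial_{\bm{n}}v\,Mw]-[Nv\,w+Mv\,\partial_{\bm{n}}w]\}\,\mathrm{d}s$ for the antisymmetric boundary form appearing in the Green identity \eqref{eq:green-2}, so that $\mathcal{P}(v,w)=-\mathcal{P}(w,v)$ and $\mathcal{P}(v,w)=0$ whenever $v,w$ are biharmonic on both sides of $\Gamma$ in the appropriate sense.

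For the left-hand side I would apply the far-field formula \eqref{eq:far-rep} of Lemma~\ref{lem:rep} to $w=u^{sc}(\cdot,\bm{x})$ and set the observation direction to $-\bm{d}$; since the kernel $e^{-\mathrm{i}\kappa(-\bm{d})\cdot\bm{y}}=e^{\mathrm{i}\kappa\bm{d}\cdot\bm{y}}$ is exactly the plane-wave incident field $u^{in}(\cdot,\bm{d})$, this gives $u^{\infty}(-\bm{d},\bm{x})=-\tfrac{1}{2\kappa^{2}}\mathcal{P}\big(u^{in}(\cdot,\bm{d}),\,u^{sc}(\cdot,\bm{x})\big)$. For the right-hand side I would start from the Green representation \eqref{eq:Green-rep} of $u^{sc}(\cdot,\bm{d})$ and apply the projector $-\tfrac{1}{2\kappa^{2}}(\Delta_{\bm{x}}-\kappa^{2})$. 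The key computation is that, from the definition \eqref{eq:funda-bi} of $G$ together with $\Delta\Phi_{\kappa}=-\kappa^{2}\Phi_{\kappa}$ and $\Delta\Phi_{\mathrm{i}\kappa}=\kappa^{2}\Phi_{\mathrm{i}\kappa}$ off the diagonal, one has $(\Delta_{\bm{x}}-\kappa^{2})G(\bm{x},\bm{y})=\Phi_{\kappa}(\bm{x},\bm{y})$. Because the boundary operators in \eqref{eq:Green-rep} act in $\bm{y}$ and commute with $\Delta_{\bm{x}}$, this replaces the biharmonic kernel $G$ by the Helmholtz kernel $\Phi_{\kappa}$ and yields $u^{pr}(\bm{x},\bm{d})=-\tfrac{1}{2\kappa^{2}}\mathcal{P}\big(\Phi_{\kappa}(\bm{x},\cdot),\,u^{sc}(\cdot,\bm{d})\big)$.

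It then remains to prove the reciprocity identity $\mathcal{P}(u^{in}(\cdot,\bm{d}),u^{sc}(\cdot,\bm{x}))=\mathcal{P}(\Phi_{\kappa}(\bm{x},\cdot),u^{sc}(\cdot,\bm{d}))$. I would expand the two total fields $u^{in}(\cdot,\bm{d})+u^{sc}(\cdot,\bm{d})$ and $\Phi_{\kappa}(\cdot,\bm{x})+u^{sc}(\cdot,\bm{x})$ and use three facts: the cross form of the two total fields vanishes pointwise on $\Gamma$ because both satisfy the homogeneous boundary conditions \eqref{eq:boundary}, which is to be verified case by case for the four conditions in the enumeration (in each case every summand of $\mathcal{P}$ carries a vanishing factor); the incident-incident form $\mathcal{P}(u^{in}(\cdot,\bm{d}),\Phi_{\kappa}(\cdot,\bm{x}))$ vanishes by Green's identity \eqref{eq:green-2} in $D$, where both fields are smooth biharmonic solutions precisely because $\bm{x}\in D^{c}$; and the scattered-scattered form $\mathcal{P}(u^{sc}(\cdot,\bm{d}),u^{sc}(\cdot,\bm{x}))$ vanishes by applying \eqref{eq:green-2} in $B_{R}\setminus\overline{D}$ and letting $R\to\infty$, the term on $\partial B_{R}$ dropping out through the radiation condition \eqref{eq:radiation} (the same mechanism behind Lemma~\ref{lem:sign}). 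Antisymmetry of $\mathcal{P}$ then collapses the expansion to the desired identity, and restoring the far-field normalization $e^{\mathrm{i}\pi/4}/\sqrt{8\pi\kappa}$ from the leading Hankel asymptotics \eqref{eq:Hankel} underlying the definition \eqref{eq:asy} of the point-source far field produces the stated constant.

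The main obstacle I anticipate is the uniform treatment of the four boundary conditions when showing the cross boundary form of the two total fields vanishes, together with the careful tracking of the far-field normalization constant through the two differently posed representations. A secondary technical point is justifying the interchange of $(\Delta_{\bm{x}}-\kappa^{2})$ with the boundary integral in \eqref{eq:Green-rep}, and confirming that locating the point source in $D^{c}$ keeps the incident-incident pairing regular so that \eqref{eq:green-2} applies in $D$ without excising the singularity.
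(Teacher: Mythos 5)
The paper offers no proof of this lemma to compare against: it imports the statement from \cite{dong_uniqueness_2024}, so your argument must stand on its own. The core of your proposal is correct and is the standard mixed-reciprocity argument, presumably the same as in the cited source. Specifically: \eqref{eq:far-rep} with observation direction $-\bm{d}$ does give $u^{\infty}(-\bm{d},\bm{x})=-\tfrac{1}{2\kappa^{2}}\mathcal{P}\bigl(u^{in}(\cdot,\bm{d}),u^{sc}(\cdot,\bm{x})\bigr)$; your projector identity $(\Delta_{\bm{x}}-\kappa^{2})G(\bm{x},\bm{y})=\Phi_{\kappa}(\bm{x},\bm{y})$ is right (it is the same computation behind the paper's $G^{pr}=-\Phi_{\kappa}/(2\kappa^{2})$ in the proof of Lemma~\ref{lem:rep}) and yields $u^{pr}(\bm{x},\bm{d})=-\tfrac{1}{2\kappa^{2}}\mathcal{P}\bigl(\Phi_{\kappa}(\bm{x},\cdot),u^{sc}(\cdot,\bm{d})\bigr)$; and the three vanishing statements all hold as you describe --- the total-total cross form vanishes summand-by-summand on $\Gamma$ for each of the four boundary conditions, the incident-incident form vanishes by \eqref{eq:green-2} in $D$ because $\bm{x}\in D^{c}$ keeps $\Phi_{\kappa}(\cdot,\bm{x})$ smooth in $\overline{D}$, and the scattered-scattered form vanishes at infinity since by \eqref{eq:MNw} the leading contributions $Mw\approx-\kappa^{2}w^{pr}$ and $Nw\approx\mathrm{i}\kappa^{3}w^{pr}$ cancel in the bilinear (unconjugated) pairing, the same mechanism as in Lemma~\ref{lem:sign}.

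The genuine defect is your closing step of ``restoring the far-field normalization.'' Your own two representations, combined with the pairing identity, force
\begin{equation*}
u^{\infty}(-\bm{d},\bm{x})
=-\frac{1}{2\kappa^{2}}\mathcal{P}\bigl(u^{in}(\cdot,\bm{d}),u^{sc}(\cdot,\bm{x})\bigr)
=-\frac{1}{2\kappa^{2}}\mathcal{P}\bigl(\Phi_{\kappa}(\bm{x},\cdot),u^{sc}(\cdot,\bm{d})\bigr)
=u^{pr}(\bm{x},\bm{d}),
\end{equation*}
i.e.\ constant $1$, with no residual factor left to insert: the prefactor $e^{\mathrm{i}\pi/4}/\sqrt{8\pi\kappa}$ is already absorbed into \eqref{eq:far-rep} through the definition \eqref{eq:asy}, under which the far-field pattern of $\bm{x}\mapsto\Phi_{\kappa}(\bm{x},\bm{y})$ is exactly $e^{-\mathrm{i}\kappa\hat{\bm{x}}\cdot\bm{y}}$. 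Multiplying by $e^{\mathrm{i}\pi/4}/\sqrt{8\pi\kappa}$ at the end is therefore a non-step. The stated constant belongs to the convention of \cite{dong_uniqueness_2024}, where the far field is defined by $u^{sc}=e^{\mathrm{i}\kappa r}r^{-1/2}u^{\infty}+O(r^{-3/2})$ without the extracted prefactor; in that convention the point source carries the far-field coefficient $e^{\mathrm{i}\pi/4}/\sqrt{8\pi\kappa}\,e^{-\mathrm{i}\kappa\hat{\bm{x}}\cdot\bm{y}}$, and your pairing identity then produces the stated formula at the \emph{first} step, via the Hankel asymptotics \eqref{eq:Hankel}, not the last. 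So either carry out the whole proof in that normalization (replacing \eqref{eq:far-rep} by its unnormalized counterpart), or record that under \eqref{eq:asy} the relation holds with constant $1$ --- your proof, faithfully executed, actually exposes this normalization mismatch between the quoted lemma and the present paper's conventions, and the final ``restoration'' hides rather than resolves it. Your secondary technical worries (interchanging $(\Delta_{\bm{x}}-\kappa^{2})$ with the boundary integral, regularity of the incident-incident pairing) are indeed harmless, since all kernels are smooth for $\bm{x}\notin\Gamma$.
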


\begin{lemma}
  The following relation holds
\begin{equation*}
  | u^{\infty}(\hat{\bm x},-\bm{ d } ) |  + | \partial_{\theta_{\hat{\bm x}} } u^{\infty}(\hat{\bm x},-\bm{ d } ) |
  \leq C \quad \hat{\bm x}, \bm{ d } \in \mathbb{S}, 
\end{equation*}
where $C$ is a constant which depends on the wave number $\kappa$ and the domain $D$.
\end{lemma}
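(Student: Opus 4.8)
The plan is to read the far-field pattern off the boundary representation \eqref{eq:far-rep} in Lemma \ref{lem:rep} and then control each resulting boundary term as a duality pairing. Taking $w=u^{sc}(\cdot,-\bm d)$ in \eqref{eq:far-rep}, the quantity $u^{\infty}(\hat{\bm x},-\bm d)$ is a fixed linear combination (with the factor $-1/(2\kappa^{2})$) of integrals over $\Gamma$ of the traces $w$, $\partial_{\bm n}w$, $Mw$, $Nw$ tested against the smooth functions $e^{-\mathrm{i}\kappa\hat{\bm x}\cdot\bm y}$, $\partial_{\bm n(\bm y)}e^{-\mathrm{i}\kappa\hat{\bm x}\cdot\bm y}$, $M_{\bm y}e^{-\mathrm{i}\kappa\hat{\bm x}\cdot\bm y}$ and $N_{\bm y}e^{-\mathrm{i}\kappa\hat{\bm x}\cdot\bm y}$. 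Each integral I would read as the appropriate $H^{s}(\Gamma)$–$H^{-s}(\Gamma)$ duality pairing, so that it is bounded by the product of the two relevant Sobolev norms.

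The two norms are controlled by two uniform bounds, one in each sphere variable. First, since $\Gamma$ is compact and of class $C^{3}$ and the map $\hat{\bm x}\mapsto e^{-\mathrm{i}\kappa\hat{\bm x}\cdot\bm y}$ together with its $\bm y$-derivatives up to third order depends smoothly on $\hat{\bm x}\in\mathbb S$, the four test functions are $C^{\infty}$ on $\Gamma$ with every $C^{k}(\Gamma)$ norm bounded independently of $\hat{\bm x}$; hence they lie in every $H^{s}(\Gamma)$ with norm bounded uniformly over $\hat{\bm x}\in\mathbb S$. Second, with $u^{in}(\cdot,-\bm d)=e^{-\mathrm{i}\kappa\,\cdot\,\bm d}$ the boundary data $(f,g)=-(\mathcal B_1u^{in},\mathcal B_2u^{in})$ is again smooth in $\bm y$ with data-norms bounded uniformly over $\bm d\in\mathbb S$, so Lemma \ref{lem:well} bounds the traces $w$, $\partial_{\bm n}w$, $Mw$, $Nw$ in the Sobolev norms dictated by the chosen boundary condition, uniformly in $\bm d$.

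Combining the two bounds, each pairing in \eqref{eq:far-rep} is estimated by (a test-function norm depending only on $\hat{\bm x}$) times (a trace norm depending only on $\bm d$), both uniformly bounded; summing the four terms gives $|u^{\infty}(\hat{\bm x},-\bm d)|\le C$. For the angular derivative I would differentiate \eqref{eq:far-rep} under the integral sign: the traces of $w$ depend on $\bm d$ but not on $\hat{\bm x}$, so $\partial_{\theta_{\hat{\bm x}}}$ falls only on the test functions, and for instance $\partial_{\theta_{\hat{\bm x}}}e^{-\mathrm{i}\kappa\hat{\bm x}\cdot\bm y}=-\mathrm{i}\kappa(\partial_{\theta}\hat{\bm x})\cdot\bm y\,e^{-\mathrm{i}\kappa\hat{\bm x}\cdot\bm y}$ is still smooth in $\bm y$ and, since $|\bm y|$ is bounded on $\Gamma$, still uniformly bounded over $\hat{\bm x}$; the same holds for the differentiated $\partial_{\bm n(\bm y)}$-, $M_{\bm y}$- and $N_{\bm y}$-test functions. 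Repeating the pairing estimate then yields the bound for $\partial_{\theta_{\hat{\bm x}}}u^{\infty}(\hat{\bm x},-\bm d)$, and adding the two estimates gives the claim.

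The only point needing care is the bookkeeping across the four boundary conditions: they place the traces in different spaces ($H^{3/2}$, $H^{1/2}$, $H^{-1/2}$, $H^{-3/2}$ in various combinations), so each trace must be matched against a test function in the correct dual space in \eqref{eq:far-rep}. This is not a genuine obstacle, because the test functions are $C^{\infty}$ on $\Gamma$ and therefore lie in all of these dual spaces with uniformly controlled norms; it is precisely the compactness of $\mathbb S$ and $\Gamma$ together with the continuous (smooth) dependence of both the test functions and the incident data on their respective sphere variables that upgrades the pointwise pairing bounds to the single uniform constant $C$ depending only on $\kappa$ and $D$.
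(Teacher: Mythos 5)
Your proposal is correct and follows essentially the same route as the paper, whose entire proof is the single sentence that the bound ``can be easily derived by Lemma \ref{lem:well}'': implicitly, that means exactly what you wrote, namely inserting the traces of $w=u^{sc}(\cdot,-\bm{d})$ into the far-field representation \eqref{eq:far-rep} of Lemma \ref{lem:rep}, bounding each term as a duality pairing between the trace norms controlled uniformly in $\bm{d}$ by Lemma \ref{lem:well} (the plane-wave data being smooth and uniformly bounded on the compact $\Gamma$) and the smooth $\hat{\bm x}$-dependent test functions, and differentiating under the integral for the $\partial_{\theta_{\hat{\bm x}}}$ term. Your write-up simply supplies the bookkeeping (including the correct dual-space matching for each of the four boundary conditions) that the paper's one-line proof compresses.
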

\begin{proof}
  The proof can be easily derived by Lemma \ref{lem:well}.
\end{proof}

The above lemmas shows that the asymptotic behaviors of the scattered field $u^{sc}(\bm{ x }_r,\bm{ x }_s) $ are very similar to the those of the acoustic scattered field with the same point sources as the incident field. Besides, the imaging function $I_{11}$ is the same as that in the case of phaseless inverse acoustic scattering problem in the form. 
Thus we can use the same method in  \cite{chen_phaseless_2017} to get the resolution analysis of the imaging function $I_{11}$. Here we omit the proof for brevity.
\begin{theorem}
  For any $\bm{ z }\in \Omega$, it holds that
  $$I_{11}(\bm{ z }) = I_1(\bm{ z }) + w_{phaseless}(\bm{ z }), $$
  where $\|w_{phaseless}\|_{L^{\infty}(\Omega)}\leq CR_s^{-1/2}$.
\end{theorem}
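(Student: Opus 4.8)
The plan is to peel $I_1(\bm z)$ off $I_{11}(\bm z)$ by an elementary expansion of the phaseless data, and then to show that the two remaining contributions are of size $R_s^{-1/2}$ by exploiting the fact, established by the three lemmas of this section, that the biharmonic point-source scattered field $u^{sc}(\bm x_r,\bm x_s)$ has essentially the same asymptotics as the corresponding acoustic scattered field. Writing $u=\Phi_\kappa+u^{sc}$ (the total field is the incident point source $\Phi_\kappa(\cdot,\bm x_s)$ plus the scattered field), I first expand
\begin{equation*}
|u(\bm x_r,\bm x_s)|^2-|\Phi_\kappa(\bm x_r,\bm x_s)|^2=\Phi_\kappa(\bm x_r,\bm x_s)\overline{u^{sc}(\bm x_r,\bm x_s)}+\overline{\Phi_\kappa(\bm x_r,\bm x_s)}\,u^{sc}(\bm x_r,\bm x_s)+|u^{sc}(\bm x_r,\bm x_s)|^2.
\end{equation*}
Dividing by $\Phi_\kappa(\bm x_r,\bm x_s)$ splits the integrand of $I_{11}$ into three pieces, with numerators $\overline{u^{sc}}$, $(\overline{\Phi_\kappa}/\Phi_\kappa)\,u^{sc}$ and $|u^{sc}|^2/\Phi_\kappa$. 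The first piece reproduces $I_1(\bm z)$ verbatim, so $w_{phaseless}(\bm z)$ is precisely the double integral of the last two pieces against $\Phi_\kappa(\bm z,\bm x_r)\Phi_\kappa(\bm z,\bm x_s)$, and it remains only to bound these uniformly for $\bm z\in\Omega$.

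For the quadratic piece $|u^{sc}|^2/\Phi_\kappa$ crude modulus estimates suffice. The first lemma of this section gives $|u^{sc}(\bm x_r,\bm x_s)|\le CR_r^{-1/2}R_s^{-1/2}$; a lower bound $|\Phi_\kappa(\bm x_r,\bm x_s)|\ge c|\bm x_r-\bm x_s|^{-1/2}$ for the separations arising here (as in \cite{chen_phaseless_2017}), together with $|\bm x_r-\bm x_s|\le 2R_r$ since $R_r=\tau R_s\ge R_s$, gives $1/|\Phi_\kappa(\bm x_r,\bm x_s)|\le CR_r^{1/2}$; and for $\bm z\in\Omega$ one has $|\Phi_\kappa(\bm z,\bm x_r)|\le CR_r^{-1/2}$ and $|\Phi_\kappa(\bm z,\bm x_s)|\le CR_s^{-1/2}$. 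Multiplying these and integrating over $\Gamma_r\times\Gamma_s$, whose lengths are of order $R_r$ and $R_s$, the powers of $R_r$ cancel and one is left with a contribution of order $R_s^{-1/2}$, uniformly in $\bm z$.

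The genuine difficulty is the cross piece $(\overline{\Phi_\kappa}/\Phi_\kappa)\,u^{sc}$: its modulus is $|u^{sc}|$, and the crude bound only gives $O(1)$, so the gain must come from oscillation. Here I would invoke the remaining two lemmas: the reciprocity-type identity $u^\infty(-\bm d,\bm x)=\frac{e^{i\pi/4}}{\sqrt{8\pi\kappa}}u^{pr}(\bm x,\bm d)$ and the uniform bounds on $u^\infty$ and its angular derivative $\partial_{\theta_{\hat{\bm x}}}u^\infty$ show that $u^{sc}(\bm x_r,\bm x_s)\sim(R_rR_s)^{-1/2}e^{i\kappa(R_r+R_s)}\times(\text{a bounded, angularly smooth far-field factor})$, exactly as for the acoustic point-source field. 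With $\overline{\Phi_\kappa(\bm x_r,\bm x_s)}/\Phi_\kappa(\bm x_r,\bm x_s)\sim e^{-2i\kappa|\bm x_r-\bm x_s|}$ and the back-propagator phases $e^{i\kappa|\bm z-\bm x_r|}$, $e^{i\kappa|\bm z-\bm x_s|}$, the cross term becomes, in the angular variables $(\theta_r,\theta_s)$, an oscillatory integral with total phase $|\bm z-\bm x_r|+|\bm z-\bm x_s|+R_r+R_s-2|\bm x_r-\bm x_s|$ and a smooth amplitude with controlled derivatives. Since $I_{11}$ has exactly the same form as the acoustic phaseless imaging function and $u^{sc}$ shares the acoustic asymptotics, the oscillatory-integral estimate of \cite{chen_phaseless_2017} applies and yields $O(R_s^{-1/2})$, completing the bound on $w_{phaseless}$.

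The step I expect to be the main obstacle is this oscillatory-integral estimate. Computing the gradient of the leading phase shows that it vanishes not at isolated points but on the whole critical set $\{\hat{\bm x}_r=\pm\hat{\bm x}_s\}$, so the stationary phase is degenerate; this is why the decay is only the square-root rate $R_s^{-1/2}$ rather than the $R_s^{-1}$ one would obtain from a non-degenerate two-dimensional stationary point. Controlling the integral therefore requires localizing near these critical curves and integrating by parts in the transverse direction, and it is precisely here that the uniform control of $u^\infty$ and $\partial_{\theta_{\hat{\bm x}}}u^\infty$ is needed to bound the amplitude and its derivatives; the bookkeeping then follows \cite{chen_phaseless_2017} verbatim.
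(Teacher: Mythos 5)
Your proposal is correct and takes essentially the same approach as the paper: the paper itself omits the details, observing via the same three lemmas you invoke that $u^{sc}(\bm{x}_r,\bm{x}_s)$ shares the acoustic point-source asymptotics, and defers to the phaseless RTM analysis of \cite{chen_phaseless_2017}, whose argument is exactly your decomposition $|u|^2-|\Phi_\kappa|^2=\Phi_\kappa\overline{u^{sc}}+\overline{\Phi_\kappa}\,u^{sc}+|u^{sc}|^2$, with the $\overline{u^{sc}}$ piece giving $I_1$, crude modulus bounds (including the Hankel-function lower bound $|H_0^{(1)}(t)|\ge(2/(\pi t))^{1/2}$) disposing of the quadratic piece, and the reciprocity lemma plus the uniform bound on $u^{\infty}$ and $\partial_{\theta_{\hat{\bm x}}}u^{\infty}$ feeding the oscillatory-integral estimate for the cross piece. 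Your filled-in bookkeeping and your identification of where the $R_s^{-1/2}$ (rather than $R_s^{-1}$) rate comes from are consistent with that reference.
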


We end this section by the following reconstruction algorithm:

\vspace{0.5\baselineskip}

\noindent\textbf{Reconstruction algorithm for extended obstacles with phaseless data}
\
\begin{itemize}
    \item  Choose a sampling domain $\Omega$ and generate sampling points.
     \item For each sampling point $\bm{ z }\in \Omega$, calculate the imaging function $I_{11}(\bm{ z })$.
     \item Plot the contours for the imaging function
       $$\hat{I}_{ 11}(\bm{ z })=\dfrac{I_{11}(\bm{ z })}{\max\limits_{\bm{ z }\in \Omega}I_{11}(\bm{ z })}.$$      
\end{itemize}

\section{Numerical experiments}
\label{sec:numerical}
In this section, several numerical examples will be presented to show the feasibility of our algorithm. To get the synthetic data, we adopt the boundary integral equation method in \cite{dong_novel_2024}. For the noisy data, the following formula is used
\begin{equation}
\notag
u^{sc}_{\delta}(\bm{ x }_r, \bm{ x }_s) = u^{sc}(\bm{ x }_r, \bm{ x }_s)
+ \delta \frac{| u^{sc}(\bm{ x }_r, \bm{ x }_s)  | }{|\xi_1+\mathrm{ i } \xi_2|} (\xi_1+\mathrm{ i } \xi_2),
\end{equation}
where $\delta$ is the noise level, and $\xi_1$ and $\xi_2$ are random numbers subject to the standard normal distribution.

{\bf Example 1.}
We consider the case where the obstacle is a unit circle centered at the origin. The sampling domain $\Omega$ is $[-6,6]\times [-6,6]$ and the wave number is $2\pi$. We use algorithm to reconstruct the obstacle; see figure \ref{fig1:circle}. It can be seen from figure \ref{fig1:circle}  that  our algorithm based on these imaging functions can locate the obstacle effectively. 
\begin{figure}[H]
     \centering
     \begin{subfigure}[b]{0.23\textwidth}
         \centering
         \includegraphics[width=\textwidth]{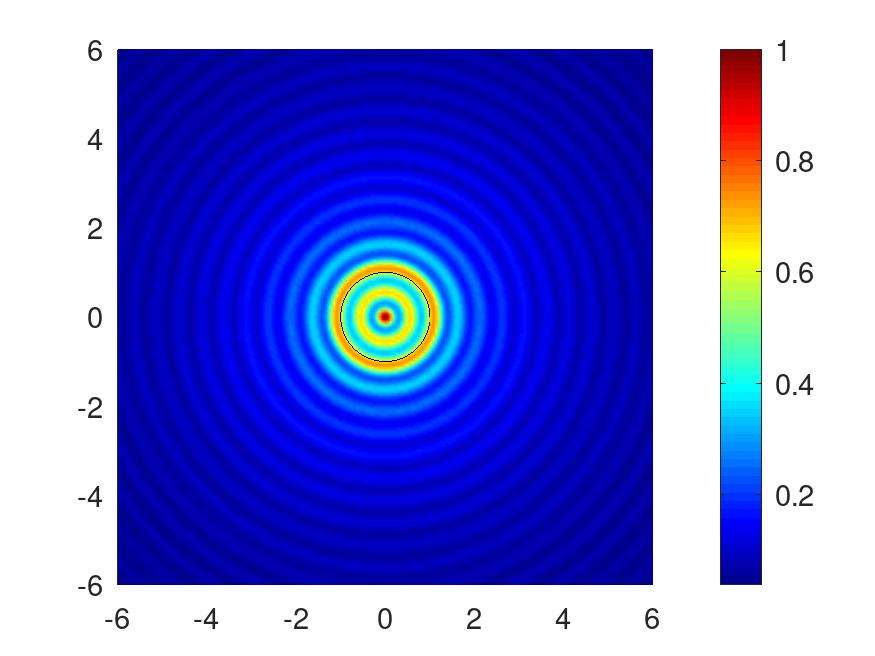}
         \caption{$I_1$}
         \label{fig1_1_1}
     \end{subfigure}
     \hfill
     \begin{subfigure}[b]{0.23\textwidth}
         \centering
         \includegraphics[width=\textwidth]{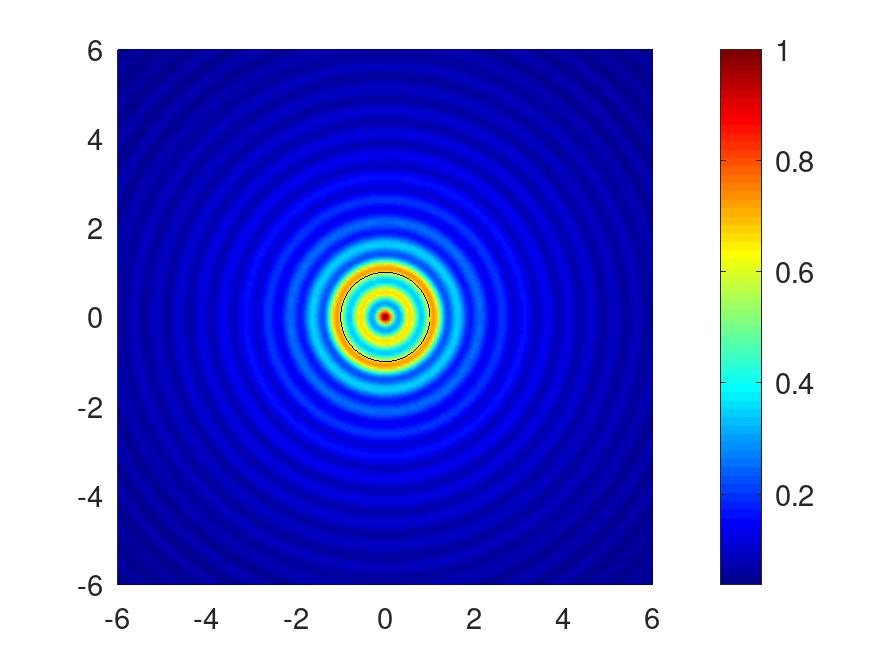}
         \caption{$I_2$}
         \label{fig1_2_1}
     \end{subfigure}
     \hfill
     \begin{subfigure}[b]{0.23\textwidth}
         \centering
         \includegraphics[width=\textwidth]{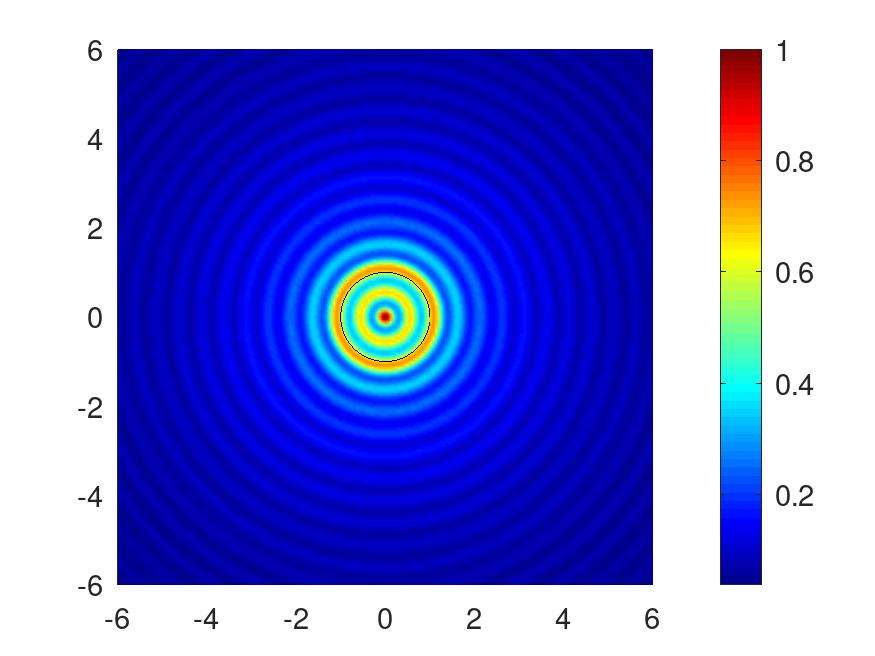}
         \caption{$I_3$}
         \label{fig1_3_1}
     \end{subfigure}
 \hfill
          \begin{subfigure}[b]{0.23\textwidth}
         \centering
         \includegraphics[width=\textwidth]{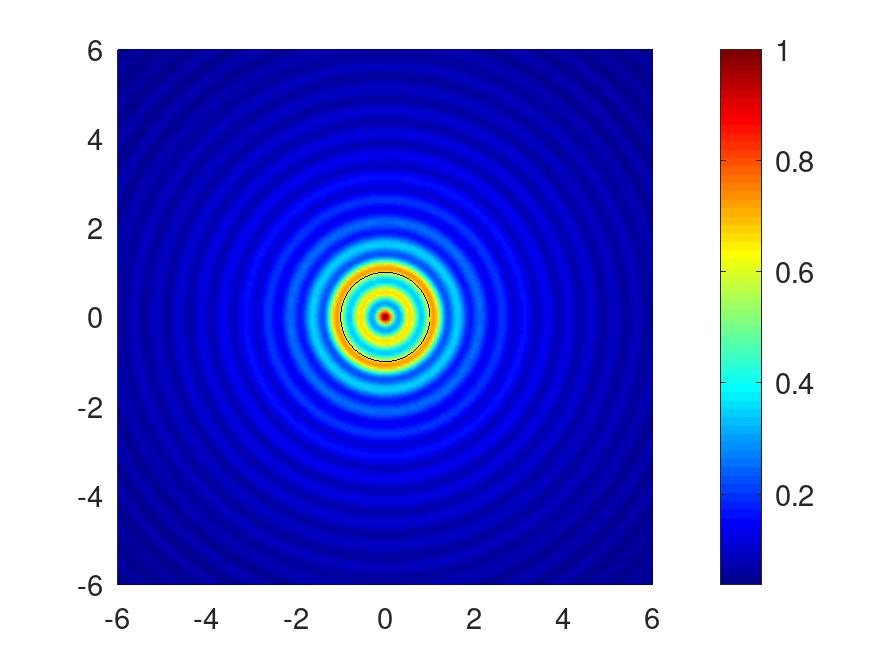}
         \caption{$I_4$}
         \label{fig1_4_1}
       \end{subfigure}

     \begin{subfigure}[b]{0.23\textwidth}
         \centering
         \includegraphics[width=\textwidth]{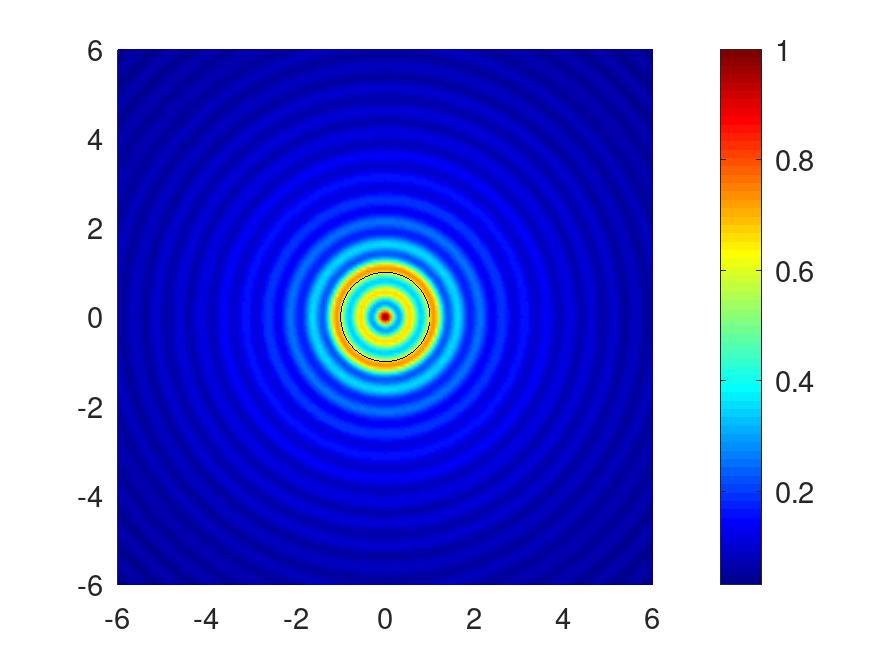}
         \caption{$I_5$}
         \label{fig1_5_1}
     \end{subfigure}
     \hfill
     \begin{subfigure}[b]{0.23\textwidth}
         \centering
         \includegraphics[width=\textwidth]{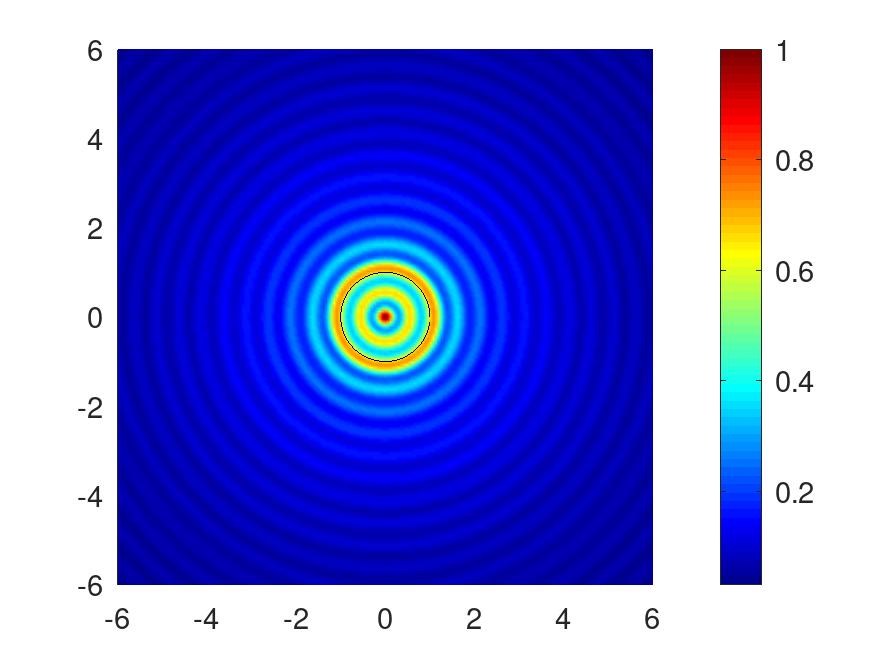}
         \caption{$I_6$}
         \label{fig1_6_1}
       \end{subfigure}
        \hfill
            \begin{subfigure}[b]{0.23\textwidth}
         \centering
         \includegraphics[width=\textwidth]{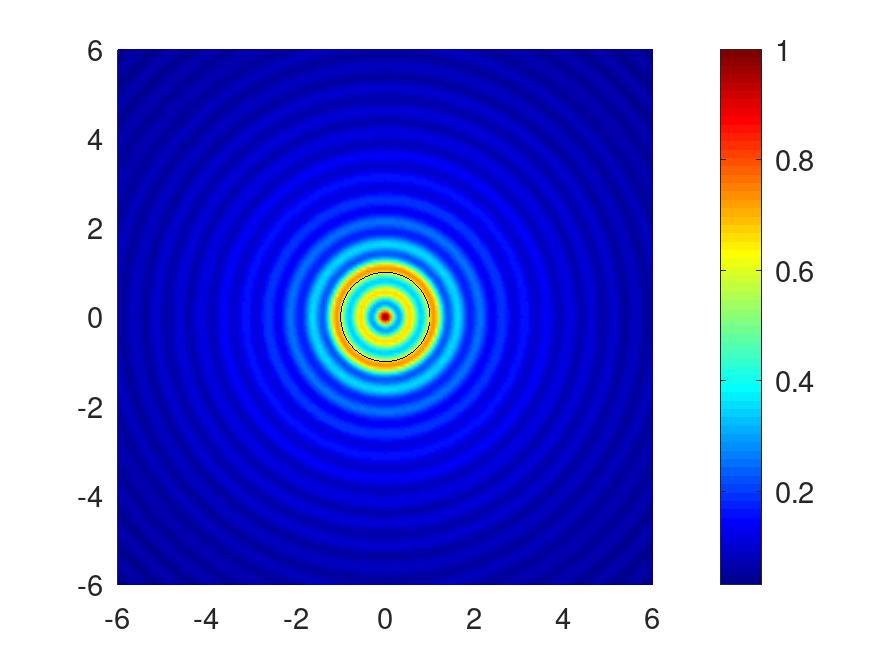}
         \caption{$I_7$}
         \label{fig1_7_1}
     \end{subfigure}
     \hfill
     \begin{subfigure}[b]{0.23\textwidth}
         \centering
         \includegraphics[width=\textwidth]{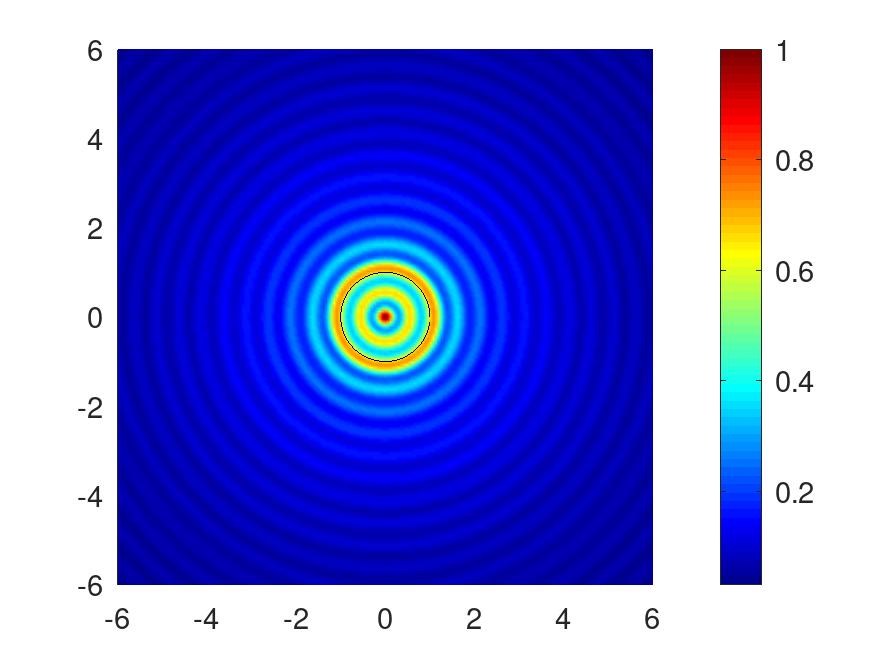}
         \caption{$I_8$}
         \label{fig1_8_1}
       \end{subfigure}
       
     \begin{subfigure}[b]{0.23\textwidth}
         \centering
         \includegraphics[width=\textwidth]{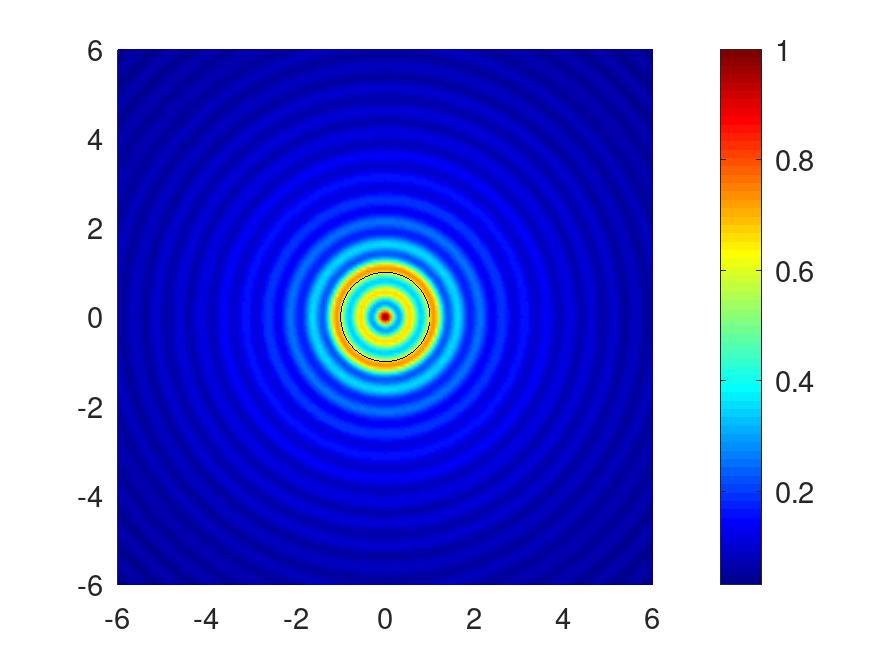}
         \caption{$I_9$}
         \label{fig1_9_1}
       \end{subfigure}
     \hfill
            \begin{subfigure}[b]{0.23\textwidth}
         \centering
         \includegraphics[width=\textwidth]{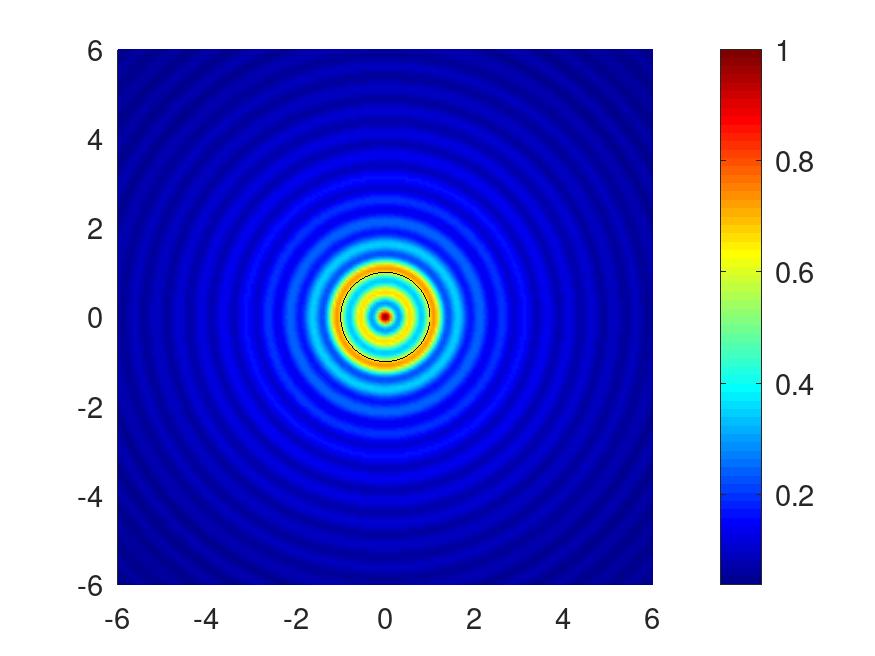}
         \caption{$I_{10}$}
         \label{fig1_10_1}
     \end{subfigure}
     \hfill
     \begin{subfigure}[b]{0.23\textwidth}
         \centering
         \includegraphics[width=\textwidth]{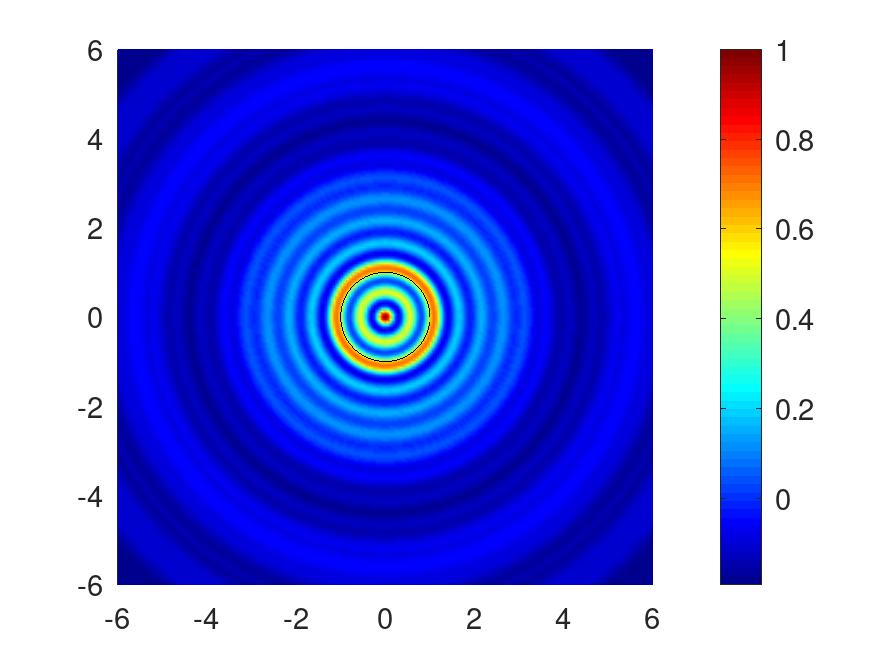}
         \caption{$I_{11}$}
         \label{fig1_11_1}
          \end{subfigure}
         \hfill
         \caption{Example 1: The reconstruction results with no noisy data}
         \label{fig1:circle}
 \end{figure}

{\bf Example 2.}
  We consider the influence of noisy data. Assume that the obstacle $D$ is a kite whose parameterized form is
  \begin{equation*}
    (0.65 \cos(2t)+\cos t-0.65, 1.5\sin t) \quad t \in [0,2\pi].
  \end{equation*}
  The sampling domain is $[-6,6]\times [-6,6]$ and the wave number $\kappa$ is $2\pi$. The results with noise level $0\%$, $5\%$ and $10\%$  are shown in figure \ref{fig2:kite} , which demonstrate that our algorithm is robust to the noise.
   \begin{figure}[H]
     \centering
     \begin{subfigure}[b]{0.3\textwidth}
         \centering
         \includegraphics[width=\textwidth]{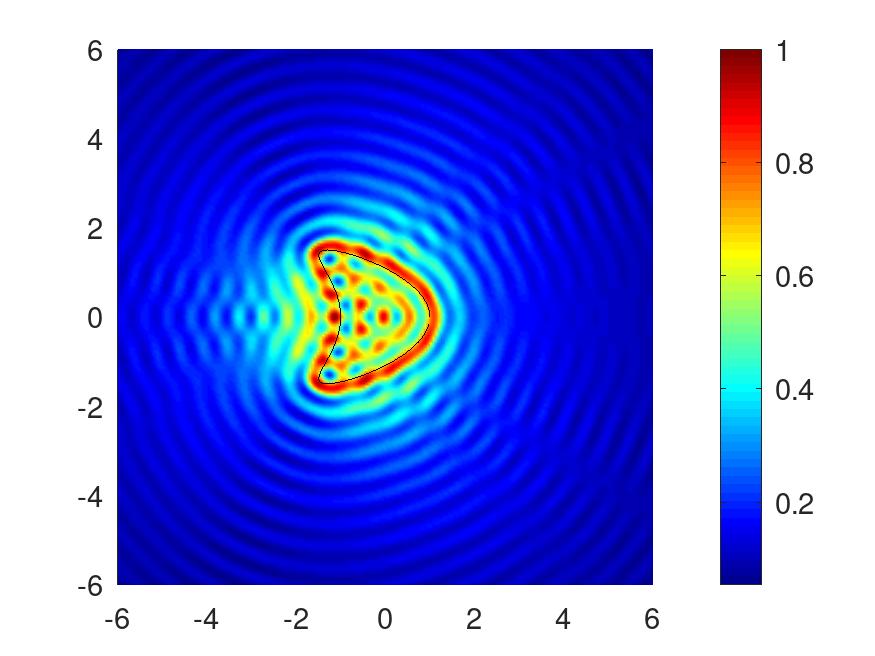}
         \caption{No noise}
         \label{fig2_1_1}
     \end{subfigure}
     \hfill
     \begin{subfigure}[b]{0.3\textwidth}
         \centering
         \includegraphics[width=\textwidth]{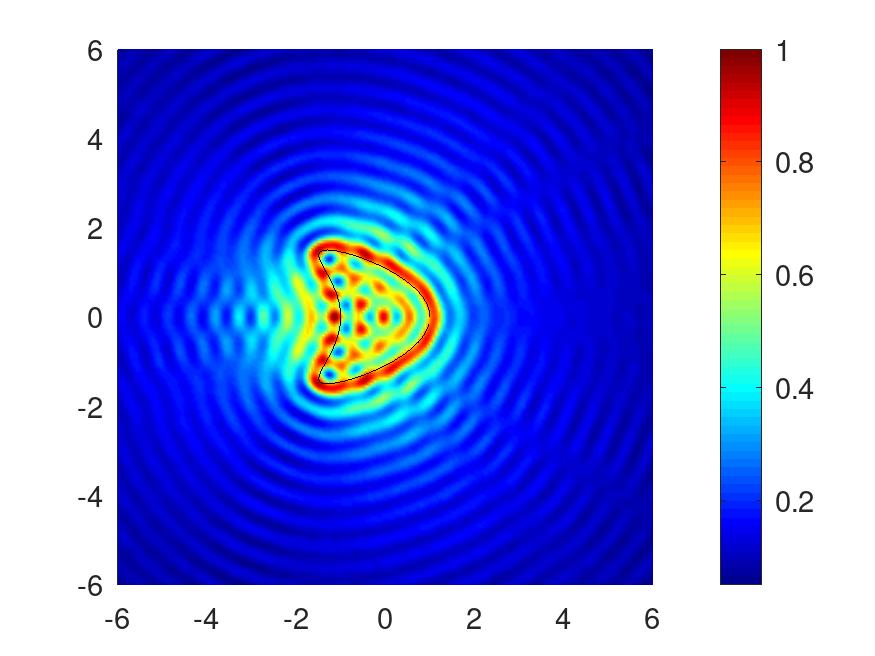}
         \caption{$5\%$ noise}
         \label{fig2_1_2}
     \end{subfigure}
     \hfill
     \begin{subfigure}[b]{0.3\textwidth}
         \centering
         \includegraphics[width=\textwidth]{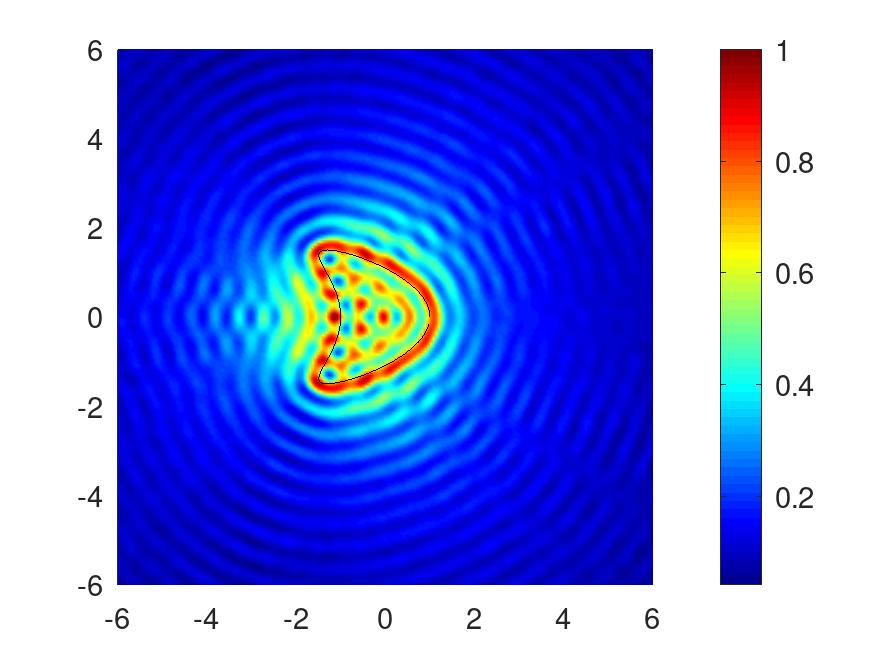}
         \caption{$10\%$ noise}
         \label{fig2_1_3}
     \end{subfigure}
      \end{figure}
     
     \begin{figure}[H]
        \centering
       \ContinuedFloat
          \begin{subfigure}[b]{0.3\textwidth}
         \centering
         \includegraphics[width=\textwidth]{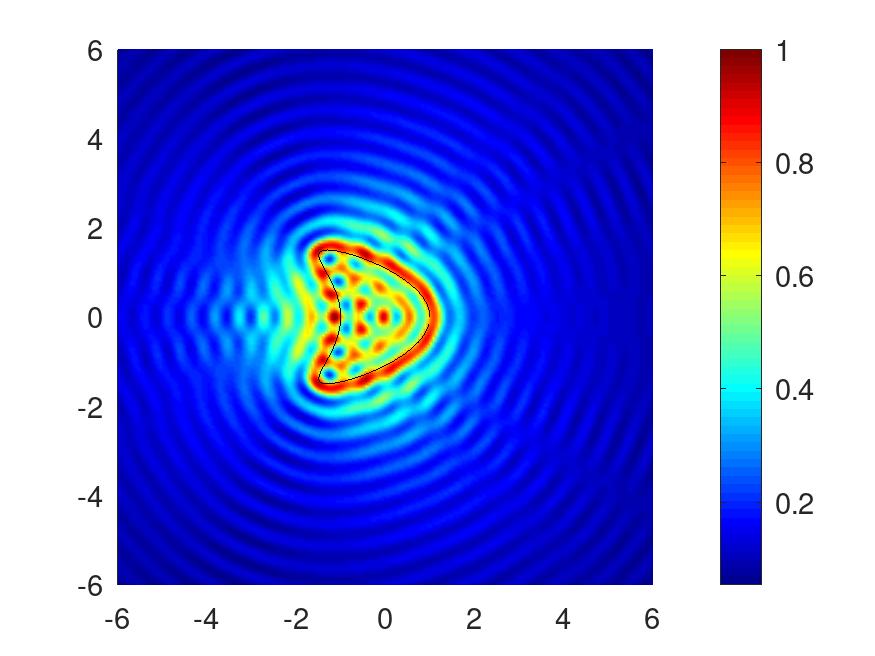}
         \caption{No noise}
         \label{fig2_2_1}
     \end{subfigure}
     \hfill
     \begin{subfigure}[b]{0.3\textwidth}
         \centering
         \includegraphics[width=\textwidth]{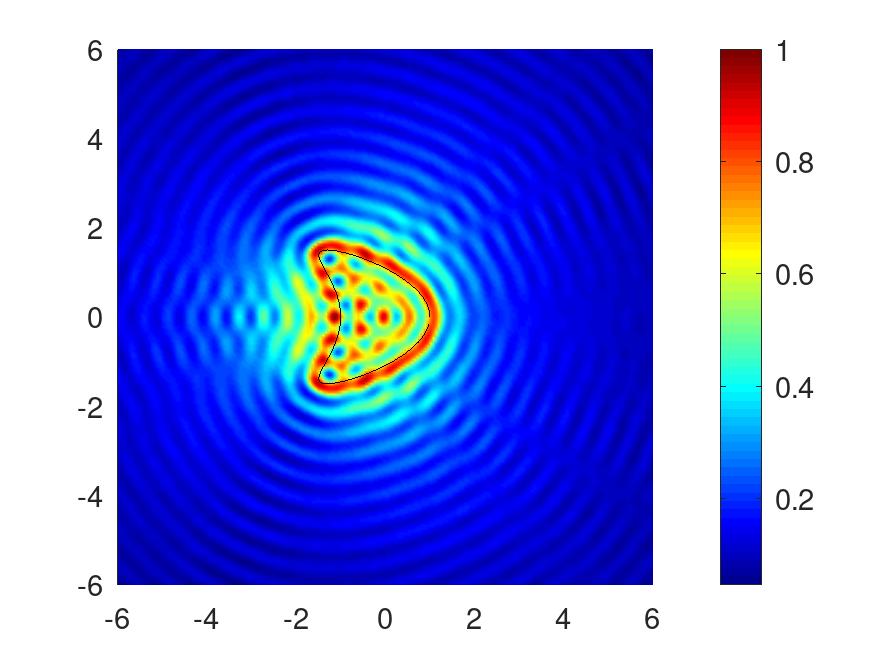}
         \caption{$5\%$ noise}
         \label{fig2_2_2}
     \end{subfigure}
     \hfill
     \begin{subfigure}[b]{0.3\textwidth}
         \centering
         \includegraphics[width=\textwidth]{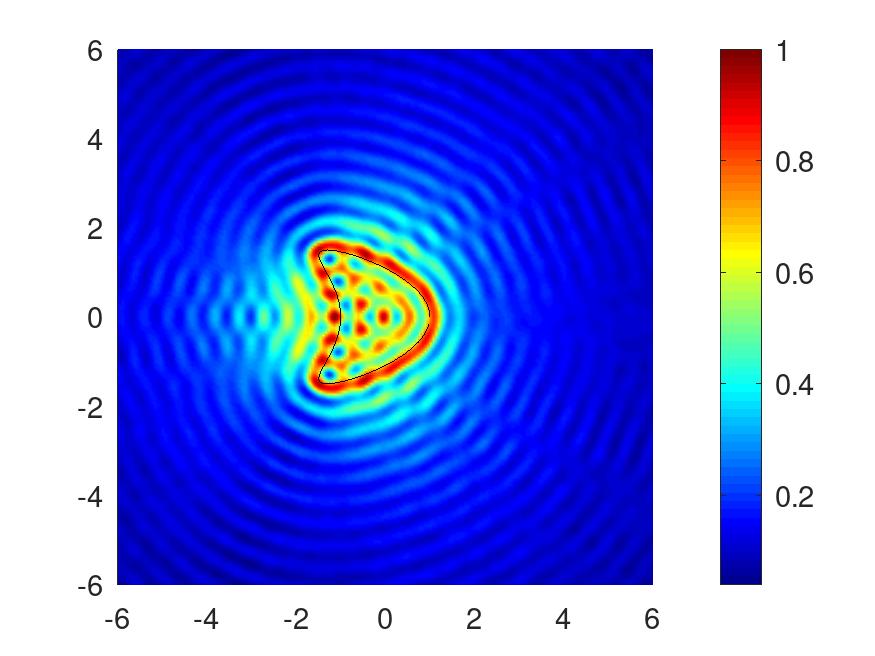}
         \caption{$10\%$ noise}
         \label{fig2_2_3}
       \end{subfigure}

            \begin{subfigure}[b]{0.3\textwidth}
         \centering
         \includegraphics[width=\textwidth]{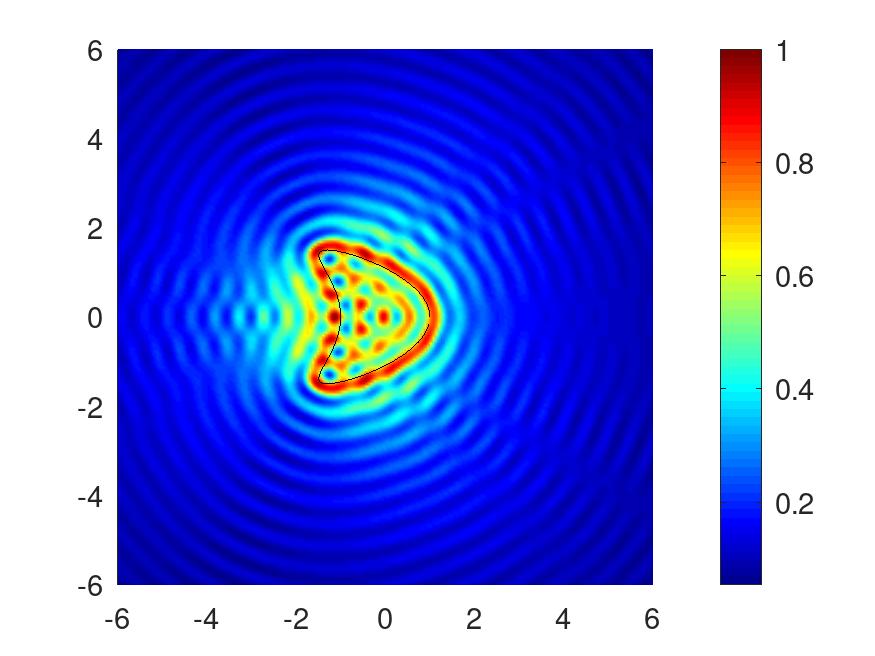}
         \caption{No noise}
         \label{fig2_3_1}
     \end{subfigure}
     \hfill
     \begin{subfigure}[b]{0.3\textwidth}
         \centering
         \includegraphics[width=\textwidth]{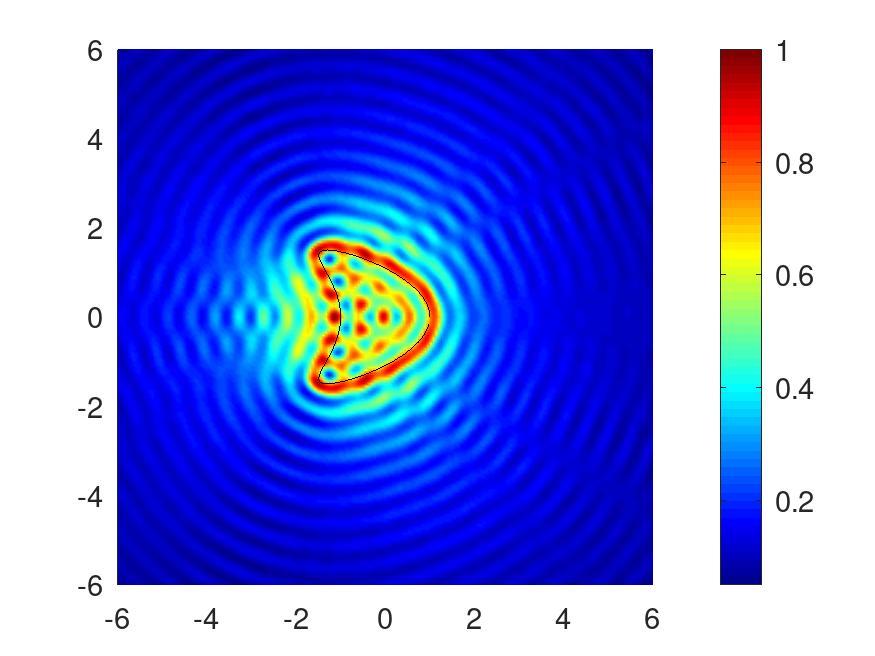}
         \caption{$5\%$ noise}
         \label{fig2_3_2}
     \end{subfigure}
     \hfill
     \begin{subfigure}[b]{0.3\textwidth}
         \centering
         \includegraphics[width=\textwidth]{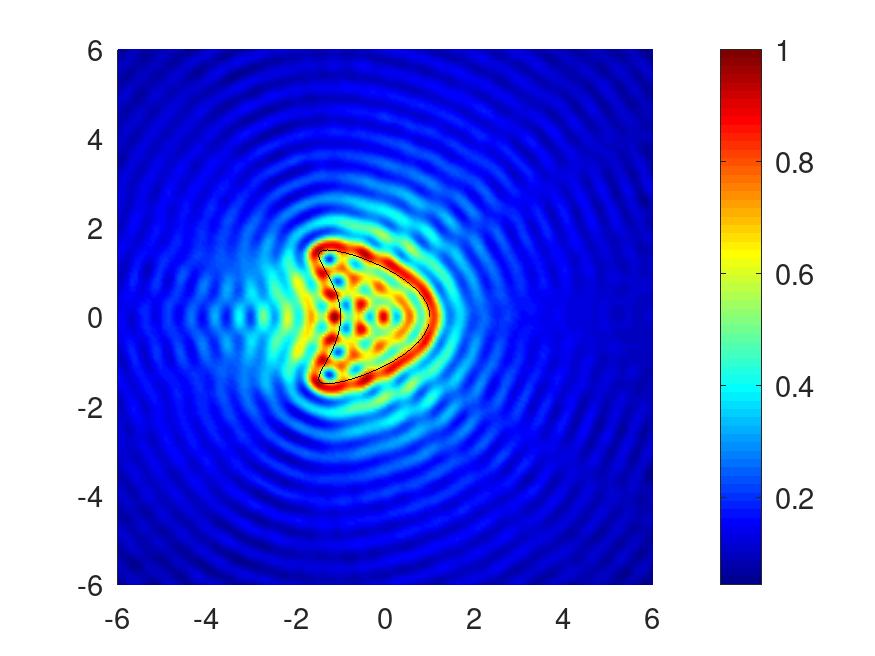}
         \caption{$10\%$ noise}
         \label{fig2_3_3}
       \end{subfigure}

            \begin{subfigure}[b]{0.3\textwidth}
         \centering
         \includegraphics[width=\textwidth]{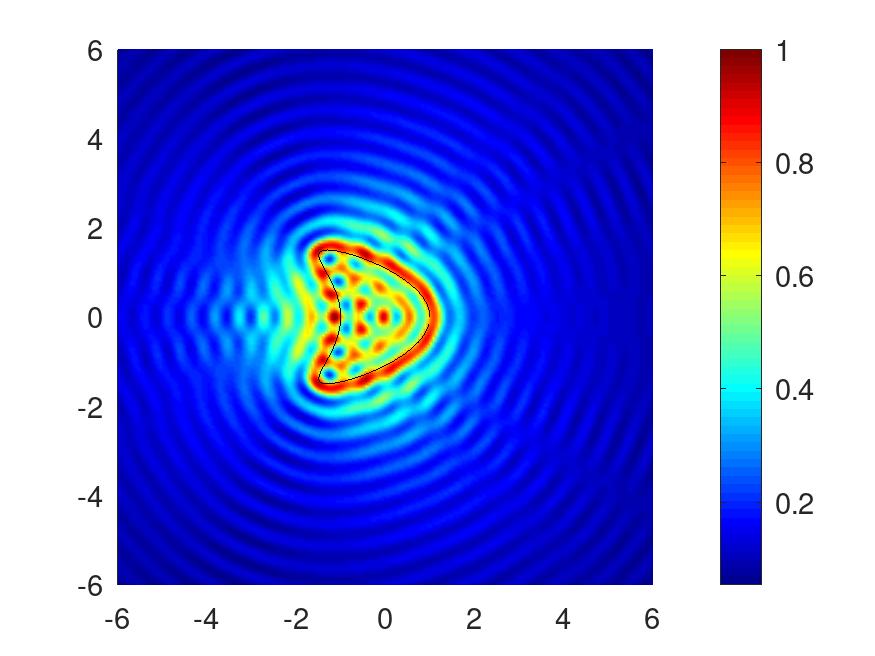}
         \caption{No noise}
         \label{fig2_4_1}
     \end{subfigure}
     \hfill
     \begin{subfigure}[b]{0.3\textwidth}
         \centering
         \includegraphics[width=\textwidth]{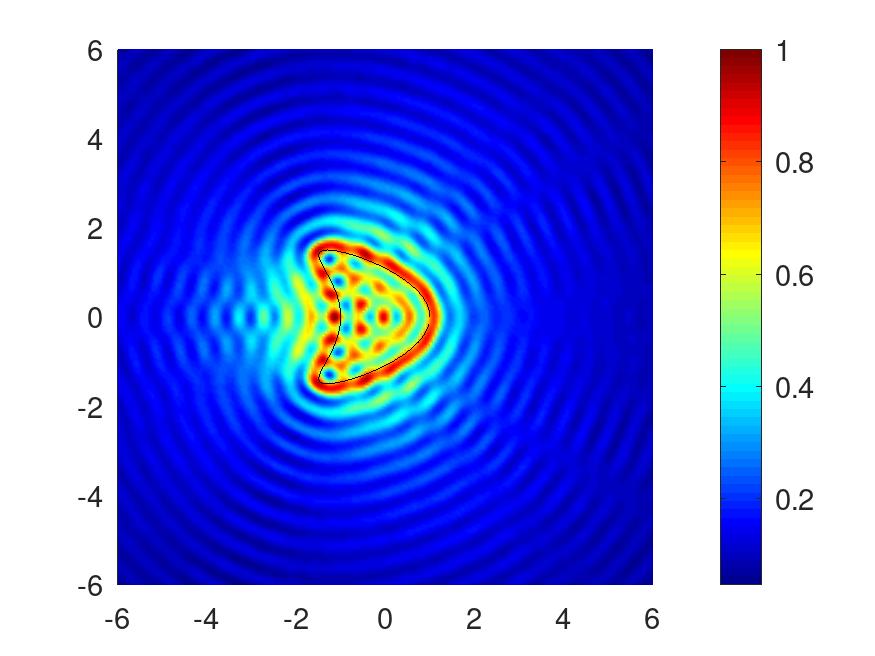}
         \caption{$5\%$ noise}
         \label{fig2_4_2}
     \end{subfigure}
     \hfill
     \begin{subfigure}[b]{0.3\textwidth}
         \centering
         \includegraphics[width=\textwidth]{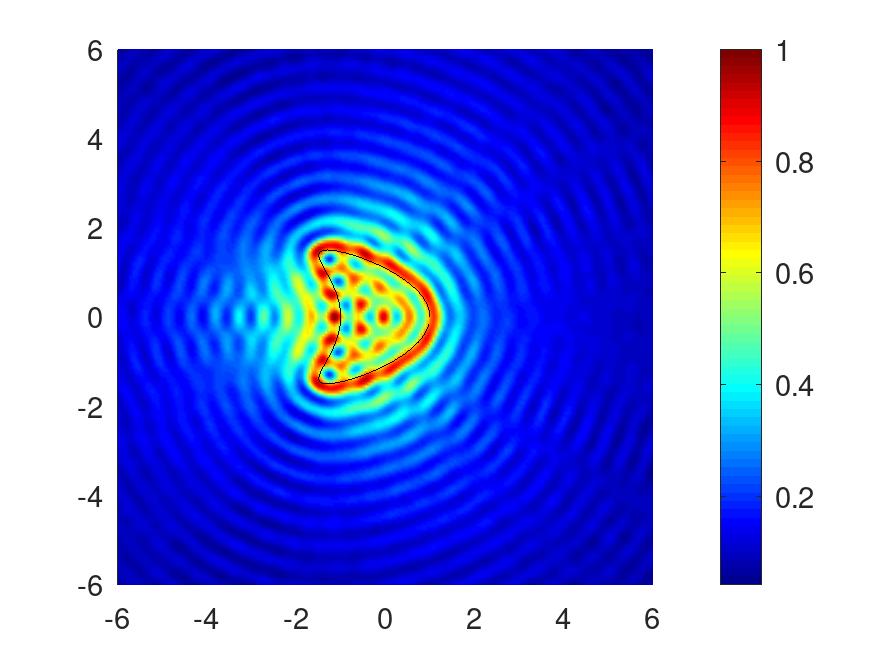}
         \caption{$10\%$ noise}
         \label{fig2_4_3}
       \end{subfigure}

            \begin{subfigure}[b]{0.3\textwidth}
         \centering
         \includegraphics[width=\textwidth]{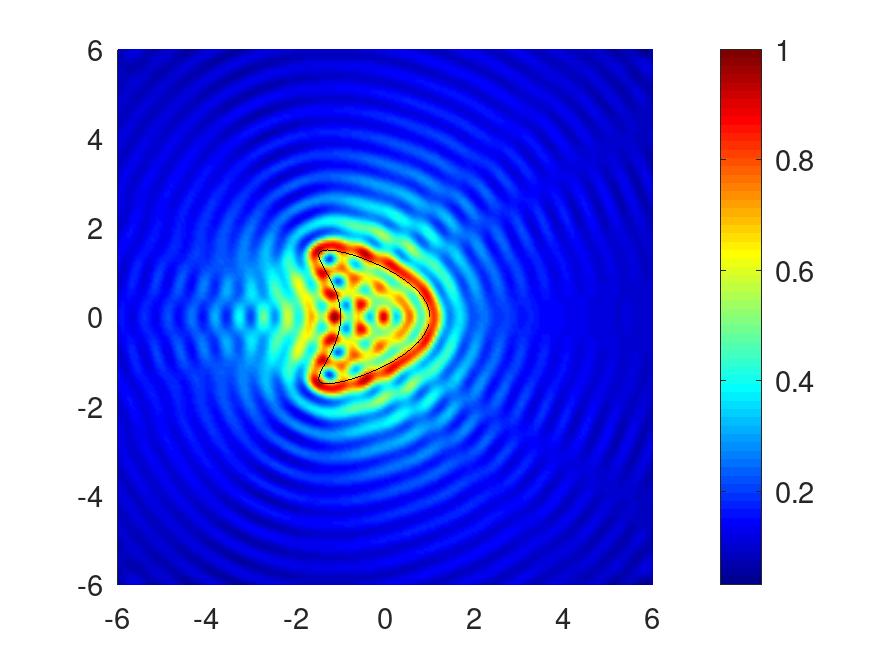}
         \caption{No noise}
         \label{fig2_5_1}
     \end{subfigure}
     \hfill
     \begin{subfigure}[b]{0.3\textwidth}
         \centering
         \includegraphics[width=\textwidth]{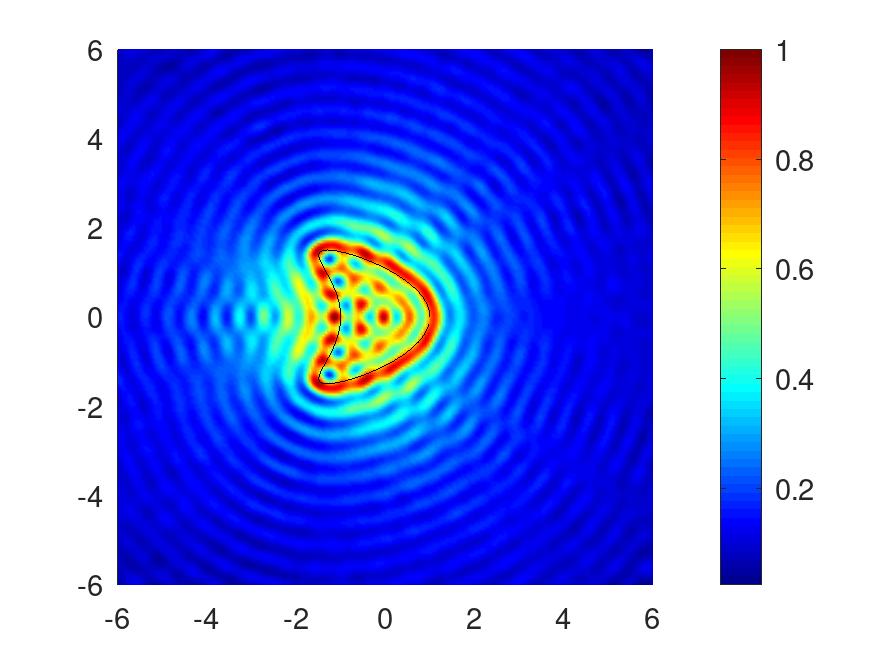}
         \caption{$5\%$ noise}
         \label{fig2_5_2}
     \end{subfigure}
     \hfill
     \begin{subfigure}[b]{0.3\textwidth}
         \centering
         \includegraphics[width=\textwidth]{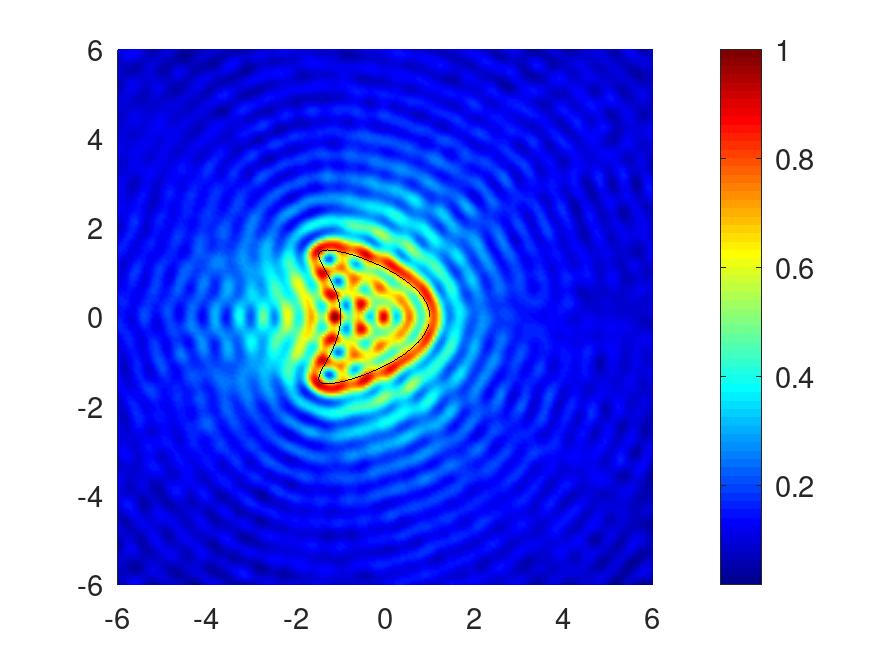}
         \caption{$10\%$ noise}
         \label{fig2_5_3}
       \end{subfigure}

            \begin{subfigure}[b]{0.3\textwidth}
         \centering
         \includegraphics[width=\textwidth]{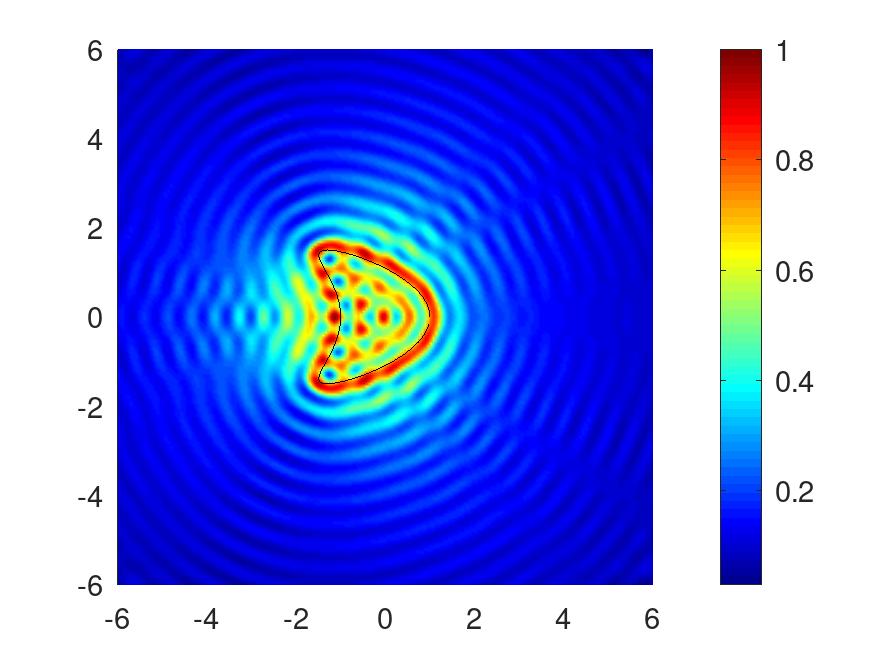}
         \caption{No noise}
         \label{fig2_6_1}
     \end{subfigure}
     \hfill
     \begin{subfigure}[b]{0.3\textwidth}
         \centering
         \includegraphics[width=\textwidth]{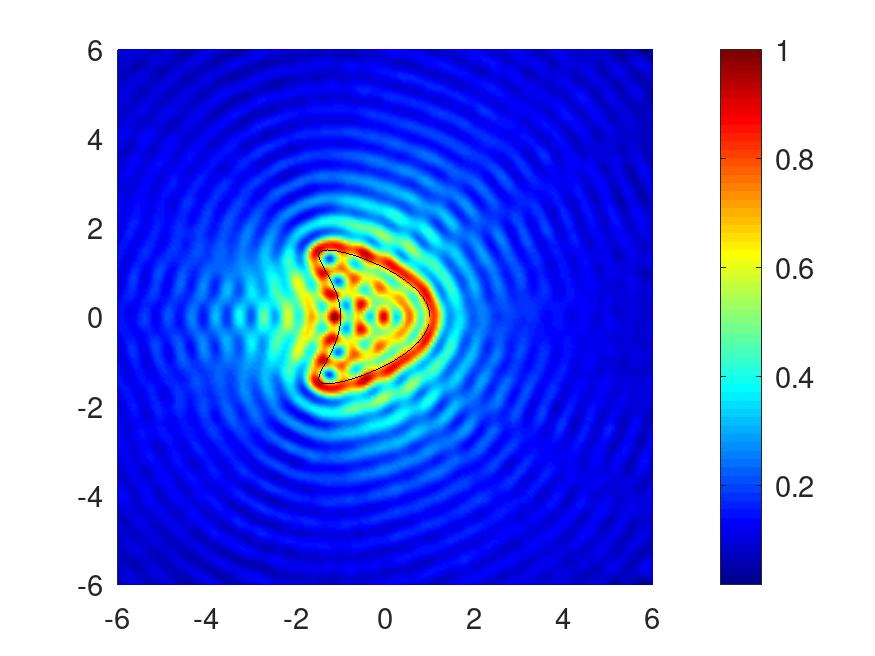}
         \caption{$5\%$ noise}
         \label{fig2_6_2}
     \end{subfigure}
     \hfill
     \begin{subfigure}[b]{0.3\textwidth}
         \centering
         \includegraphics[width=\textwidth]{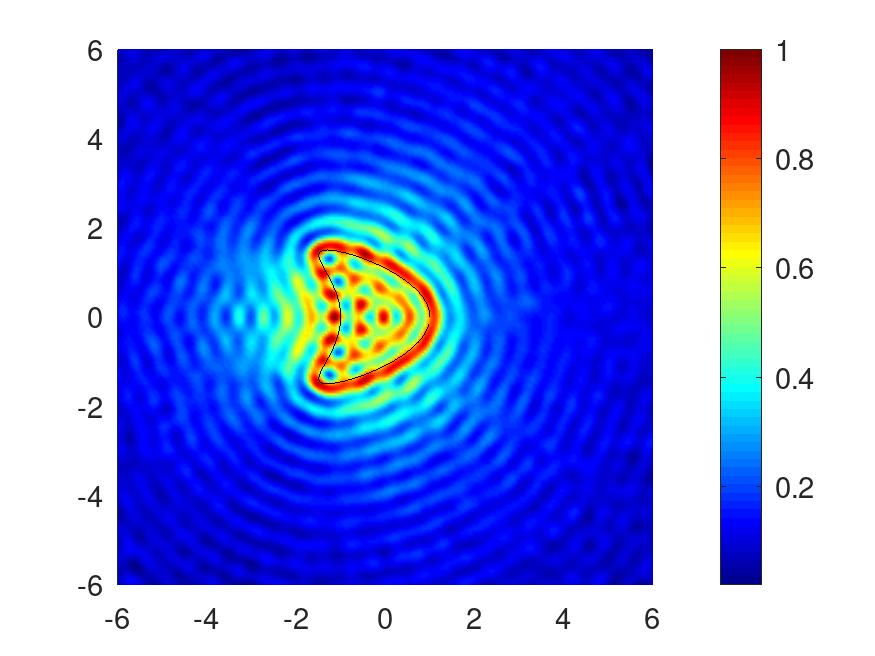}
         \caption{$10\%$ noise}
         \label{fig2_6_3}
       \end{subfigure}
       \end{figure}

       \begin{figure}[H]
         \centering
         \ContinuedFloat
            \begin{subfigure}[b]{0.3\textwidth}
         \centering
         \includegraphics[width=\textwidth]{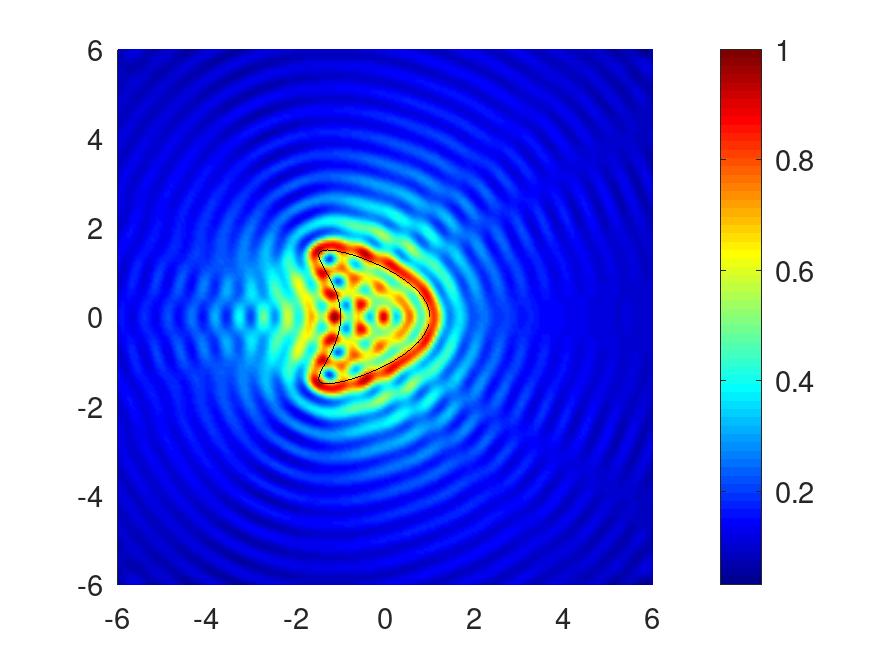}
         \caption{No noise}
         \label{fig2_7_1}
     \end{subfigure}
     \hfill
     \begin{subfigure}[b]{0.3\textwidth}
         \centering
         \includegraphics[width=\textwidth]{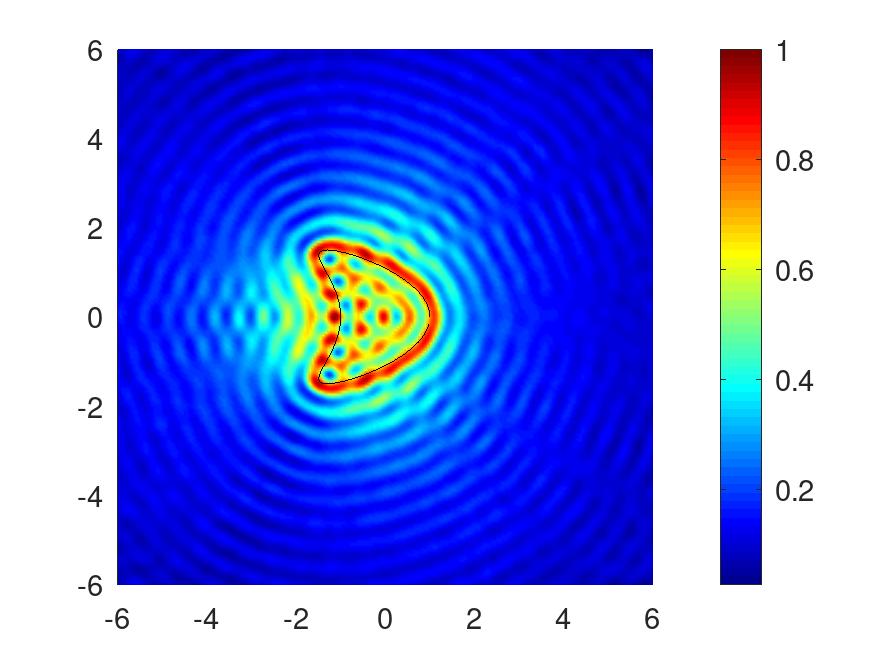}
         \caption{$5\%$ noise}
         \label{fig2_7_2}
     \end{subfigure}
     \hfill
     \begin{subfigure}[b]{0.3\textwidth}
         \centering
         \includegraphics[width=\textwidth]{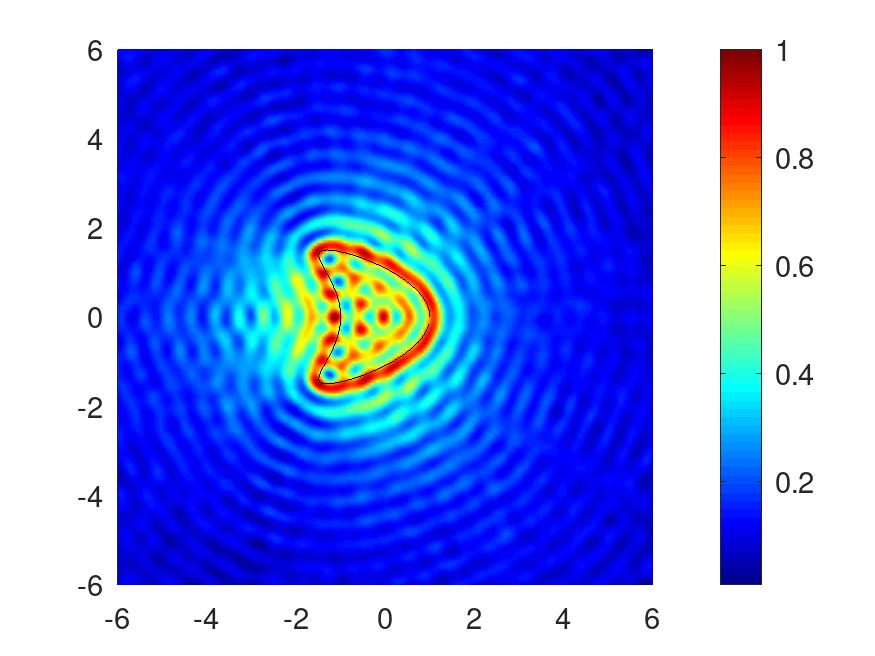}
         \caption{$10\%$ noise}
         \label{fig2_7_3}
     \end{subfigure}

        \begin{subfigure}[b]{0.3\textwidth}
         \centering
         \includegraphics[width=\textwidth]{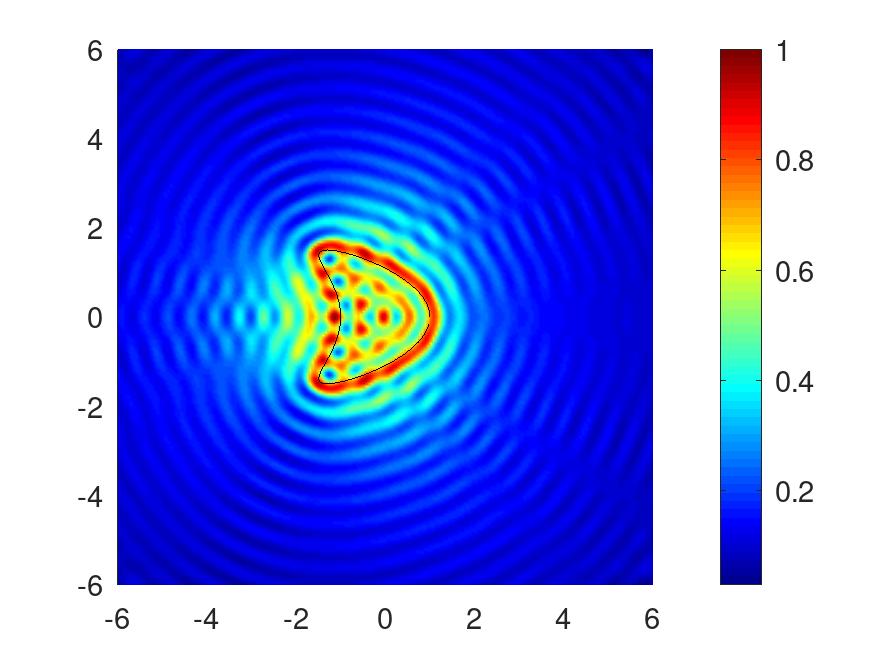}
         \caption{No noise}
         \label{fig2_8_1}
     \end{subfigure}
     \hfill
     \begin{subfigure}[b]{0.3\textwidth}
         \centering
         \includegraphics[width=\textwidth]{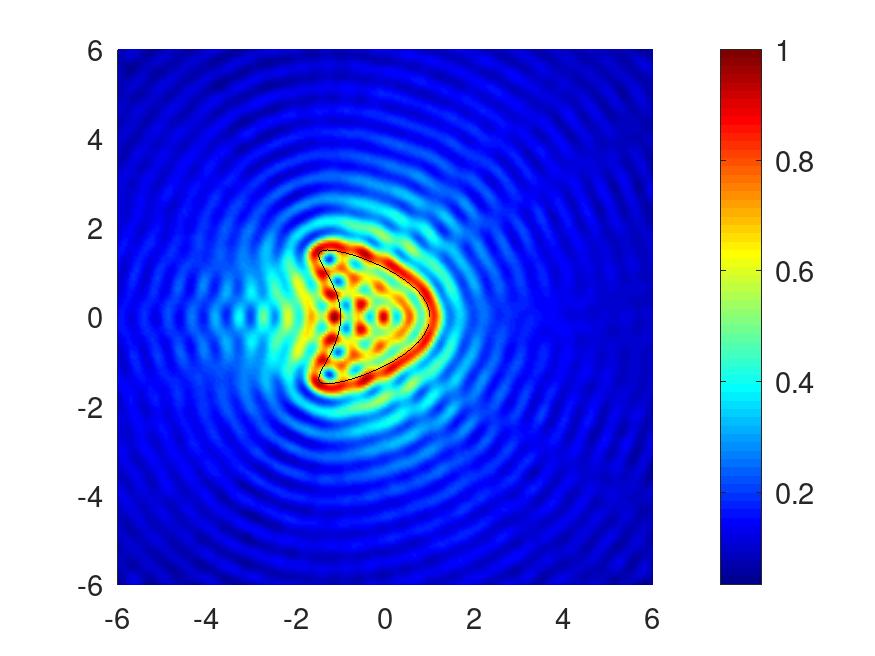}
         \caption{$5\%$ noise}
         \label{fig2_8_2}
     \end{subfigure}
     \hfill
     \begin{subfigure}[b]{0.3\textwidth}
         \centering
         \includegraphics[width=\textwidth]{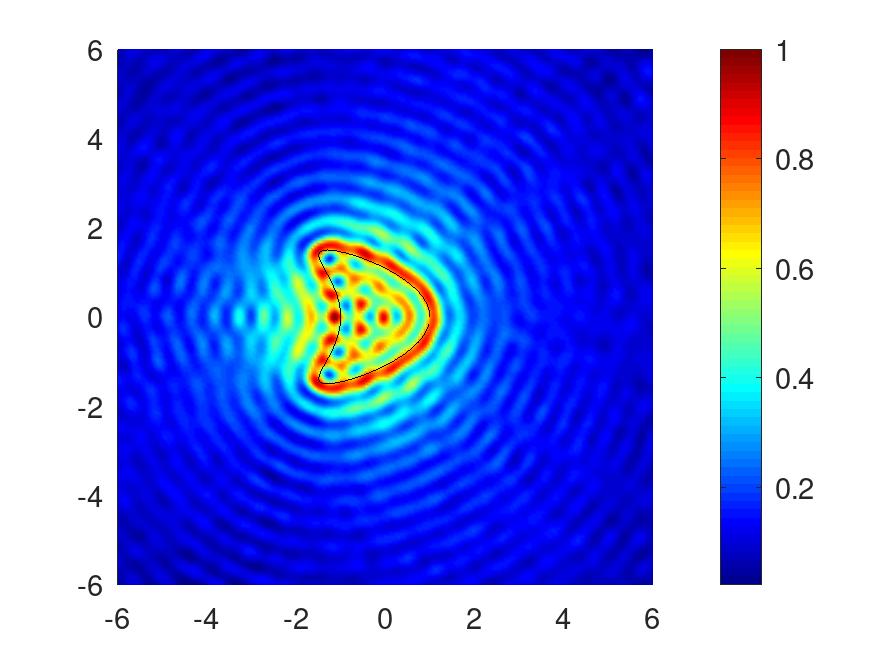}
         \caption{$10\%$ noise}
         \label{fig2_8_3}
        \end{subfigure}

         \begin{subfigure}[b]{0.3\textwidth}
         \centering
        \includegraphics[width=\textwidth]{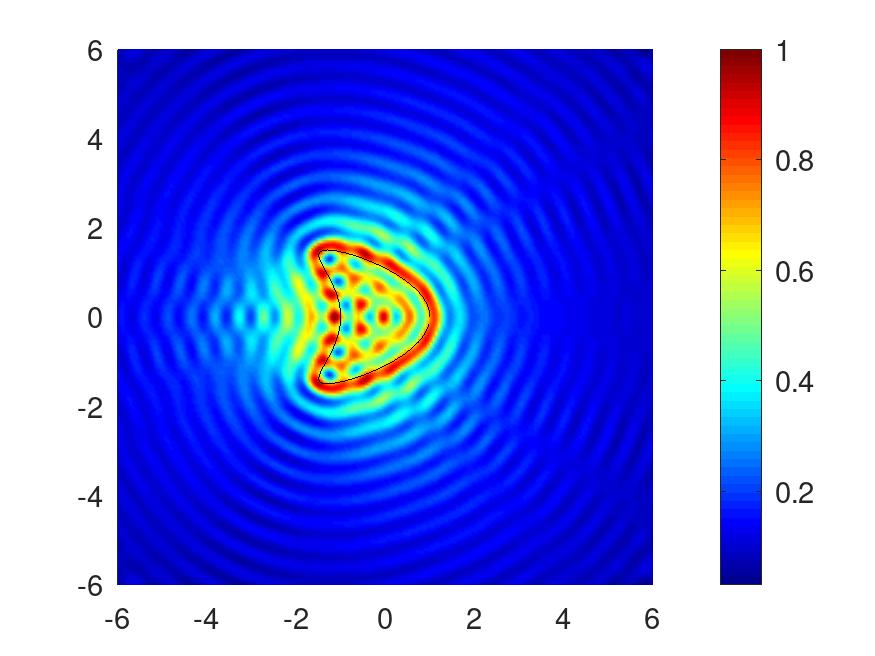}
        \caption{No noise}
         \label{fig2_9_1}
     \end{subfigure}
     \hfill
     \begin{subfigure}[b]{0.3\textwidth}
         \centering
         \includegraphics[width=\textwidth]{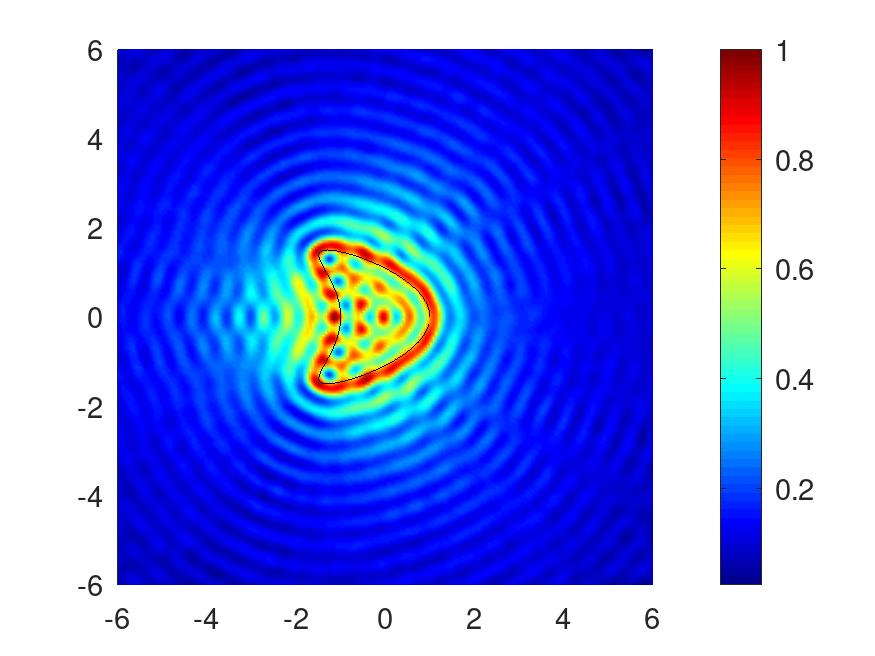}
         \caption{$5\%$ noise}
         \label{fig2_9_2}
     \end{subfigure}
     \hfill
     \begin{subfigure}[b]{0.3\textwidth}
         \centering
         \includegraphics[width=\textwidth]{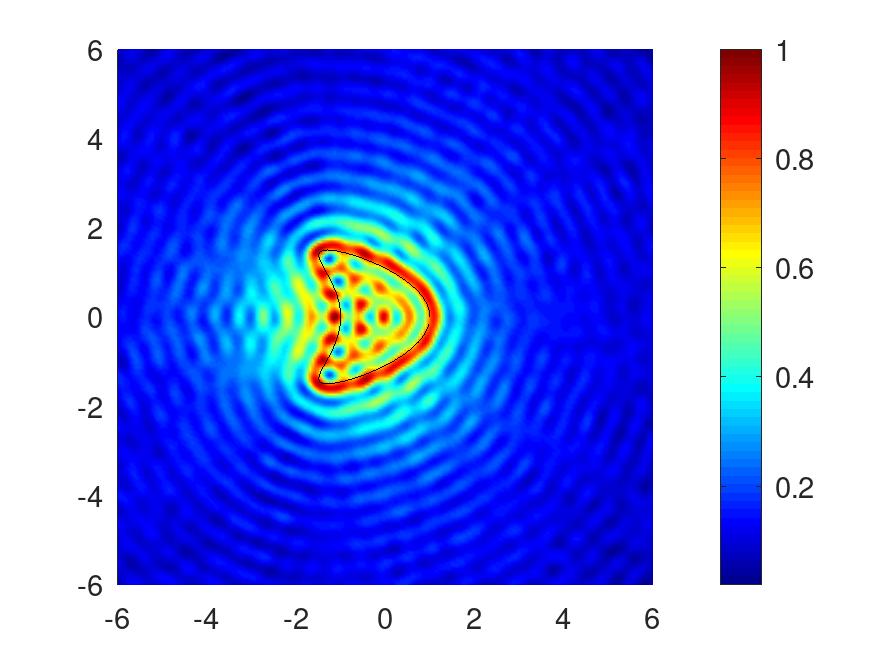}
        \caption{$10\%$ noise}
        \label{fig2_9_3}
      \end{subfigure}

            \begin{subfigure}[b]{0.3\textwidth}
         \centering
         \includegraphics[width=\textwidth]{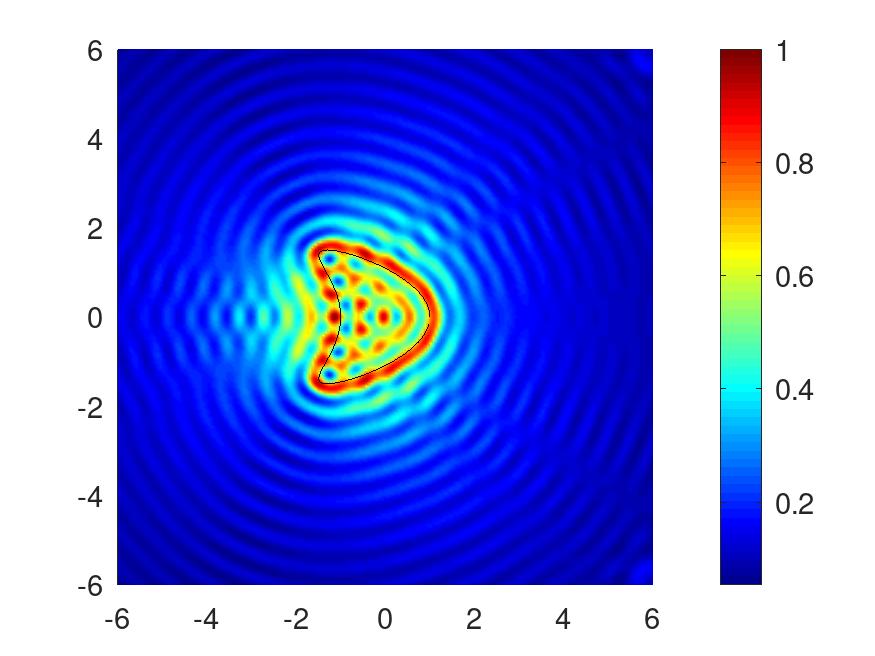}
         \caption{No noise}
         \label{fig2_10_1}
     \end{subfigure}
     \hfill
     \begin{subfigure}[b]{0.3\textwidth}
         \centering
         \includegraphics[width=\textwidth]{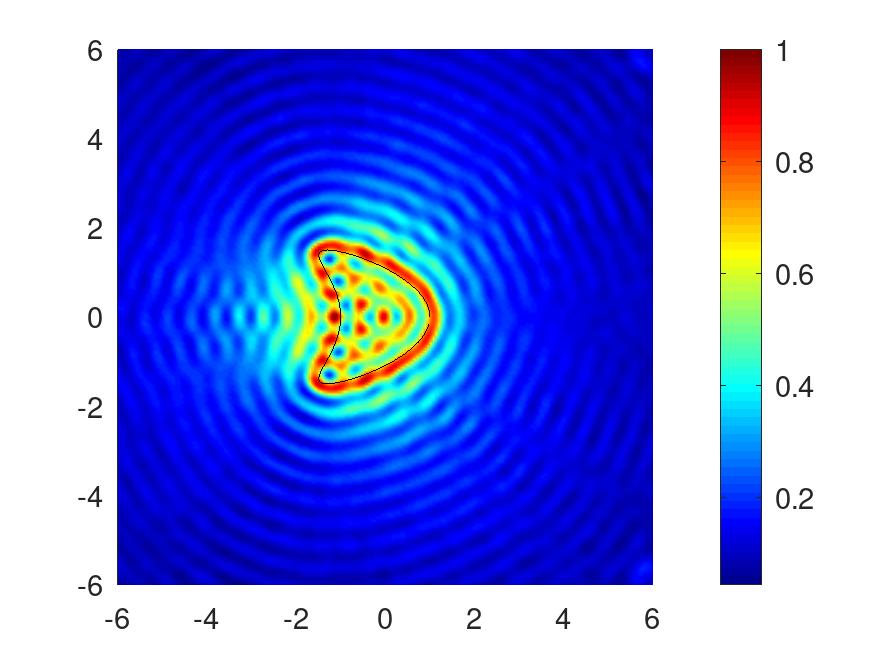}
         \caption{$5\%$ noise}
         \label{fig2_10_2}
     \end{subfigure}
     \hfill
     \begin{subfigure}[b]{0.3\textwidth}
         \centering
         \includegraphics[width=\textwidth]{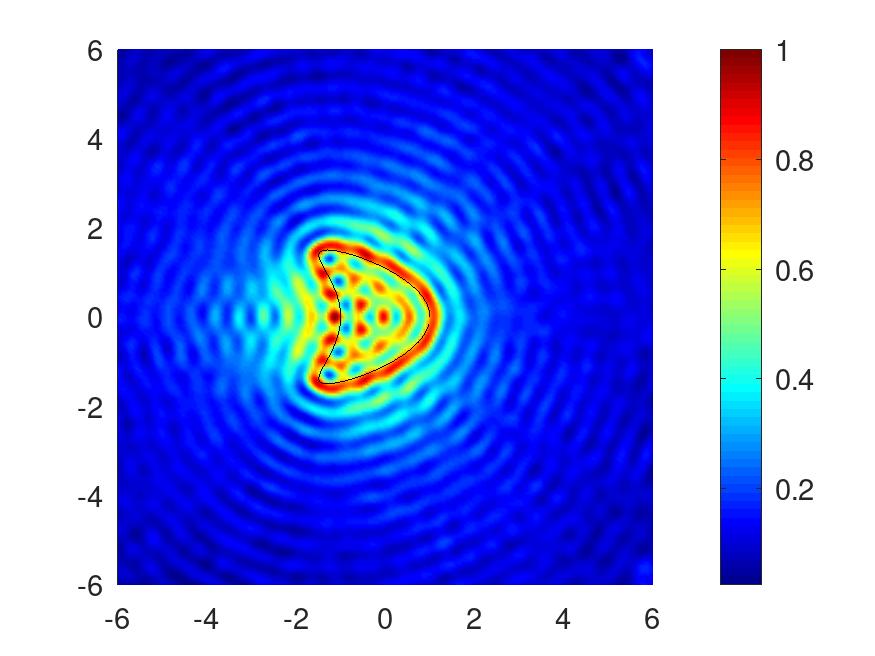}
         \caption{$10\%$ noise}
         \label{fig2_10_3}
       \end{subfigure}

           \begin{subfigure}[b]{0.3\textwidth}
        \centering
        \includegraphics[width=\textwidth]{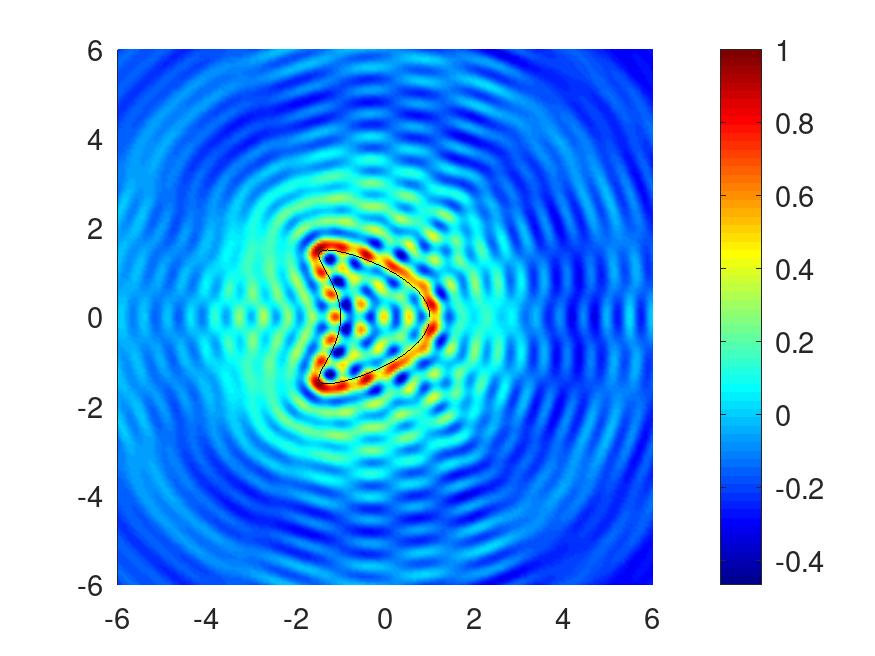}
        \caption{No noise}
        \label{fig2_11_1}
    \end{subfigure}
    \hfill
     \begin{subfigure}[b]{0.3\textwidth}
         \centering
         \includegraphics[width=\textwidth]{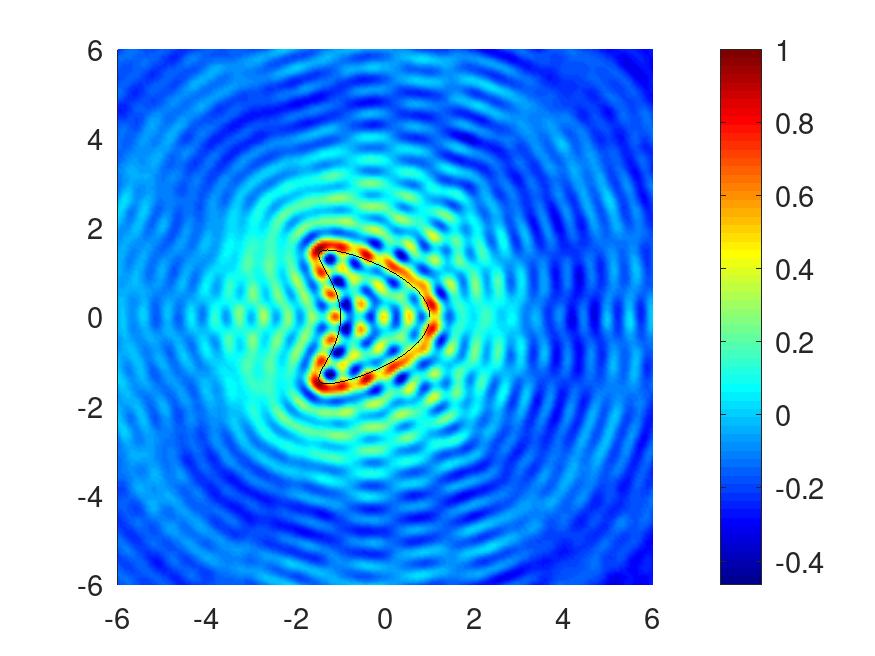}
         \caption{$5\%$ noise}
         \label{fig2_11_2}
     \end{subfigure}
     \hfill
    \begin{subfigure}[b]{0.3\textwidth}
        \centering
         \includegraphics[width=\textwidth]{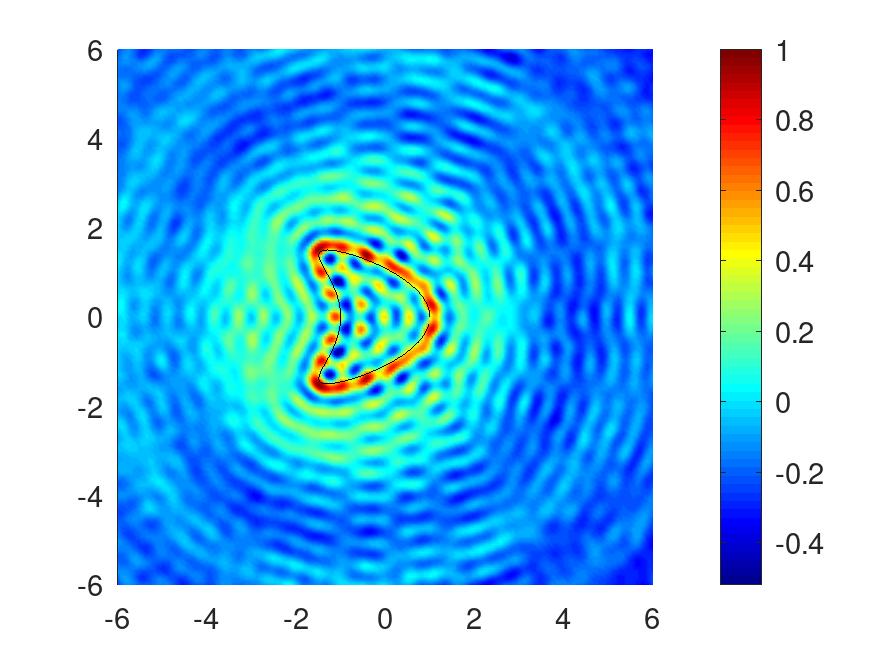}
         \caption{$10\%$ noise}
         \label{fig2_11_3}
       \end{subfigure}

               \caption{Example 2: The $j-$th row presents the reconstruction results of the indicator function $I_j$ $(j=1,\cdots,11)$.}
        \label{fig2:kite}
\end{figure}

{\bf Example 3.} We consider the case of two obstacles, i.e., a circle and a kite.
\begin{align*}
  (\cos t -2, \sin t- 2)\quad t \in [0,2\pi],\\
    (0.65 \cos(2t)+\cos t+1.35, 1.5\sin t+2) \quad t \in [0,2\pi].
  \end{align*}
   The sampling domain is  $[-6,6]\times [-6,6]$ and the wave number $\kappa$ is $2\pi$. The reconstruction results are presented in figure \ref{fig3:obstalces}. Clearly, our algorithm also works well for multiple obstacles.
     \begin{figure}[H]
     \centering
     \begin{subfigure}[b]{0.3\textwidth}
         \centering
         \includegraphics[width=\textwidth]{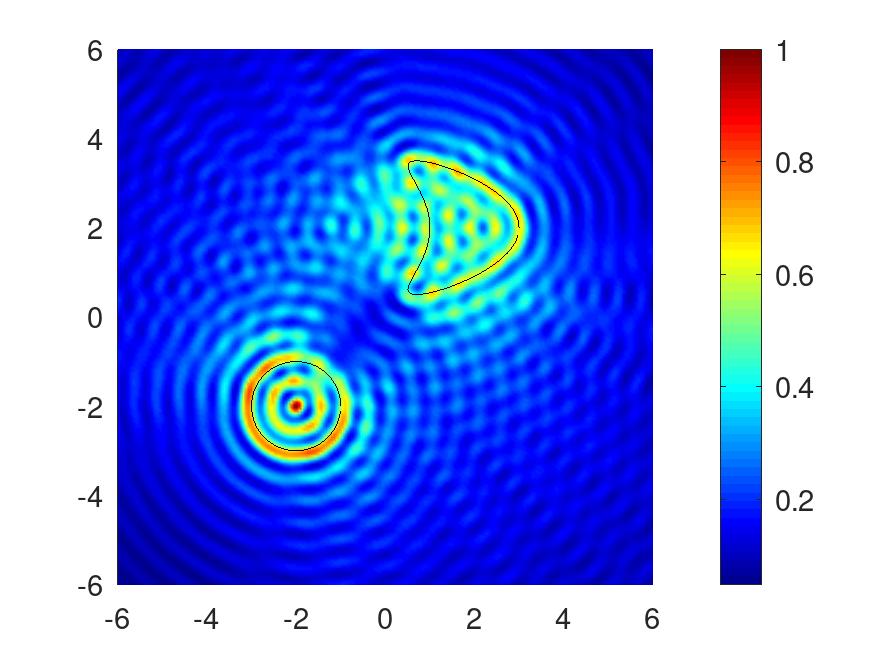}
         \caption{No noise}
         \label{fig3_1_1}
     \end{subfigure}
     \hfill
     \begin{subfigure}[b]{0.3\textwidth}
         \centering
         \includegraphics[width=\textwidth]{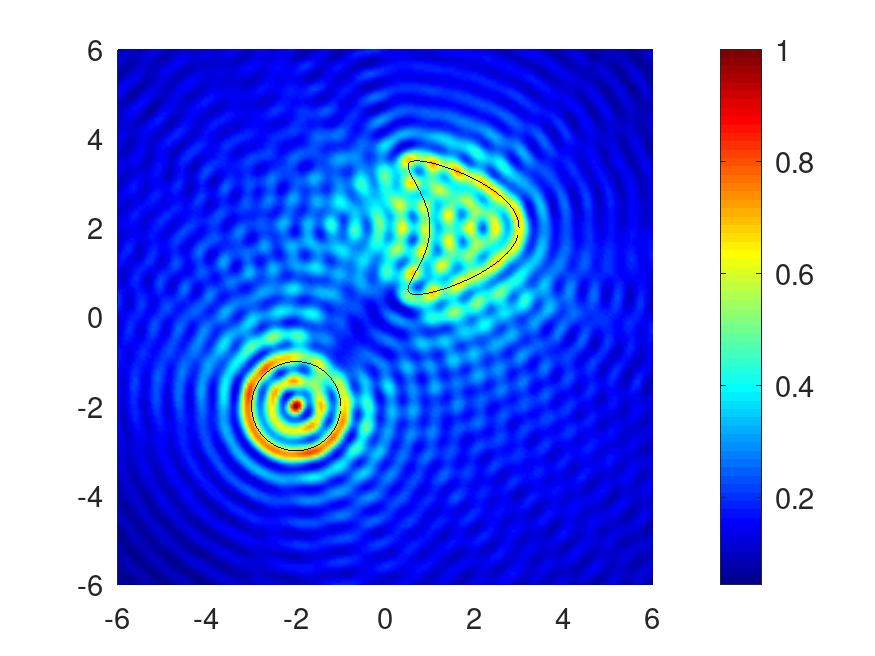}
         \caption{$5\%$ noise}
         \label{fig3_1_2}
     \end{subfigure}
     \hfill
     \begin{subfigure}[b]{0.3\textwidth}
         \centering
         \includegraphics[width=\textwidth]{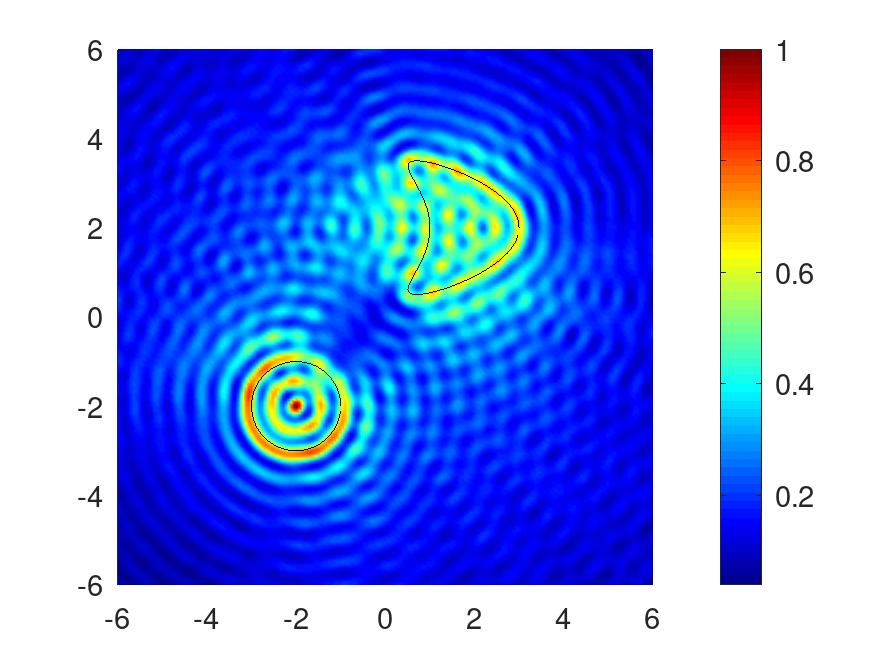}
         \caption{$10\%$ noise}
         \label{fig3_1_3}
     \end{subfigure}

          \begin{subfigure}[b]{0.3\textwidth}
         \centering
         \includegraphics[width=\textwidth]{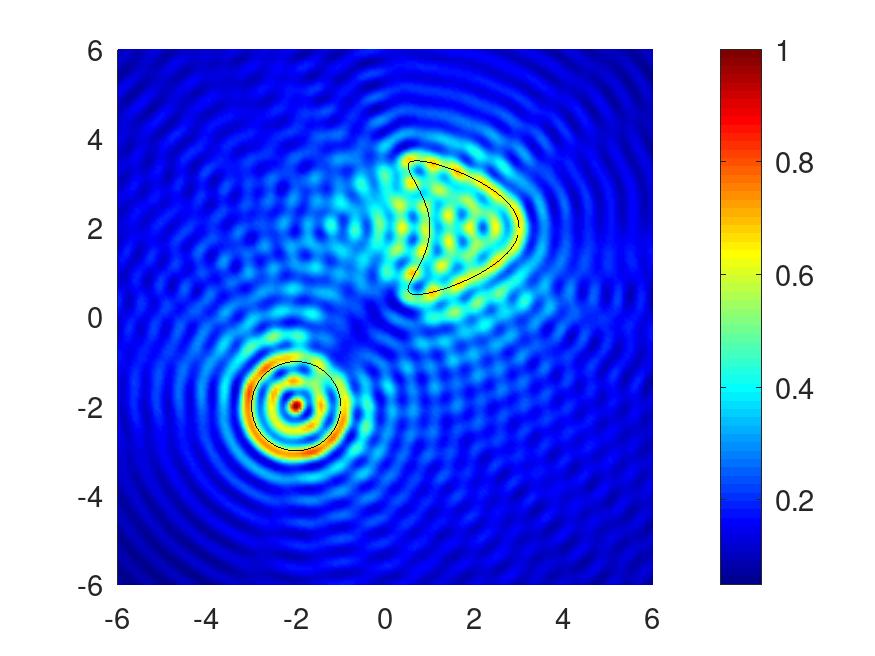}
         \caption{No noise}
         \label{fig3_2_1}
     \end{subfigure}
     \hfill
     \begin{subfigure}[b]{0.3\textwidth}
         \centering
         \includegraphics[width=\textwidth]{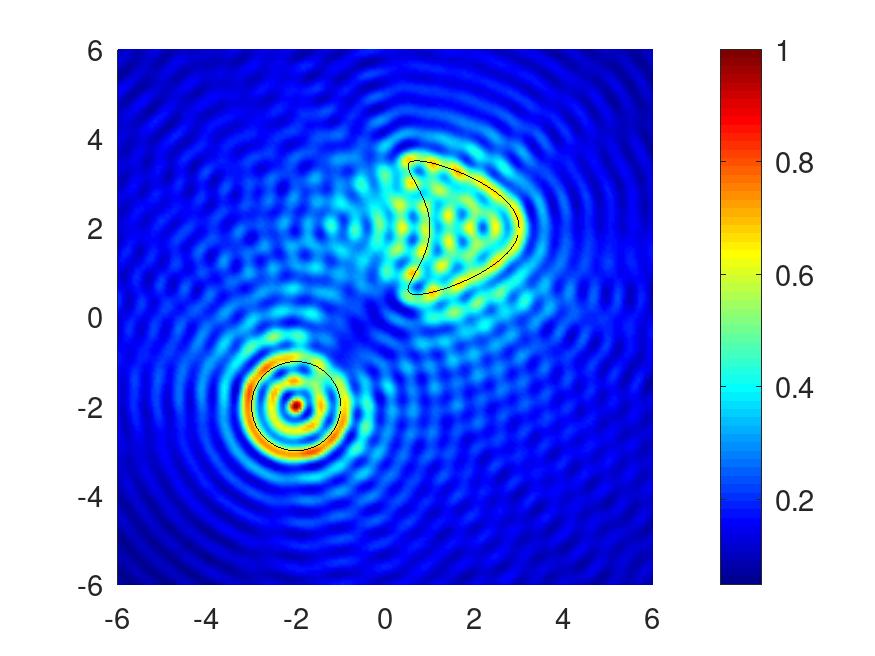}
         \caption{$5\%$ noise}
         \label{fig3_2_2}
     \end{subfigure}
     \hfill
     \begin{subfigure}[b]{0.3\textwidth}
         \centering
         \includegraphics[width=\textwidth]{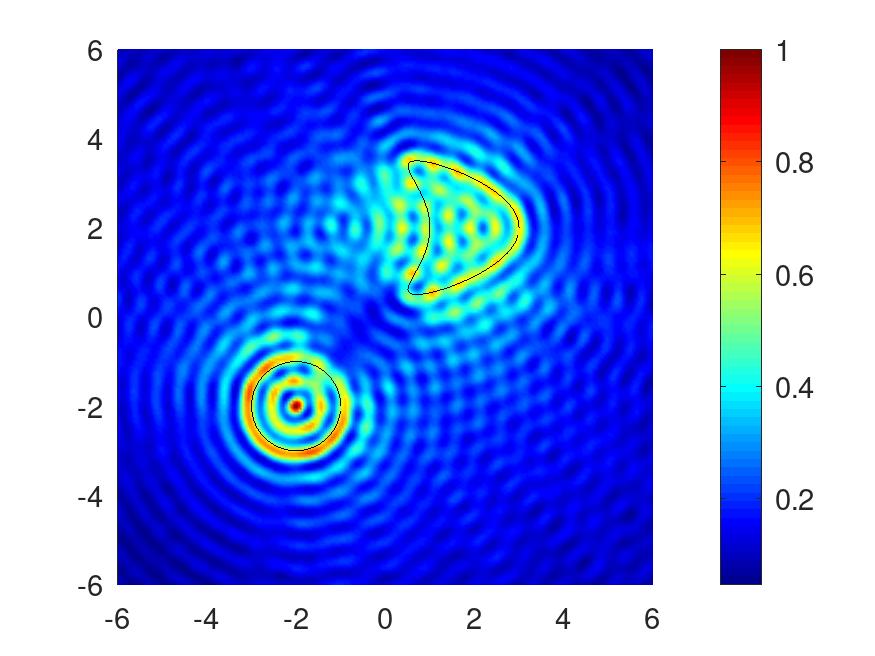}
         \caption{$10\%$ noise}
         \label{fig3_2_3}
       \end{subfigure}
       
            \begin{subfigure}[b]{0.3\textwidth}
         \centering
         \includegraphics[width=\textwidth]{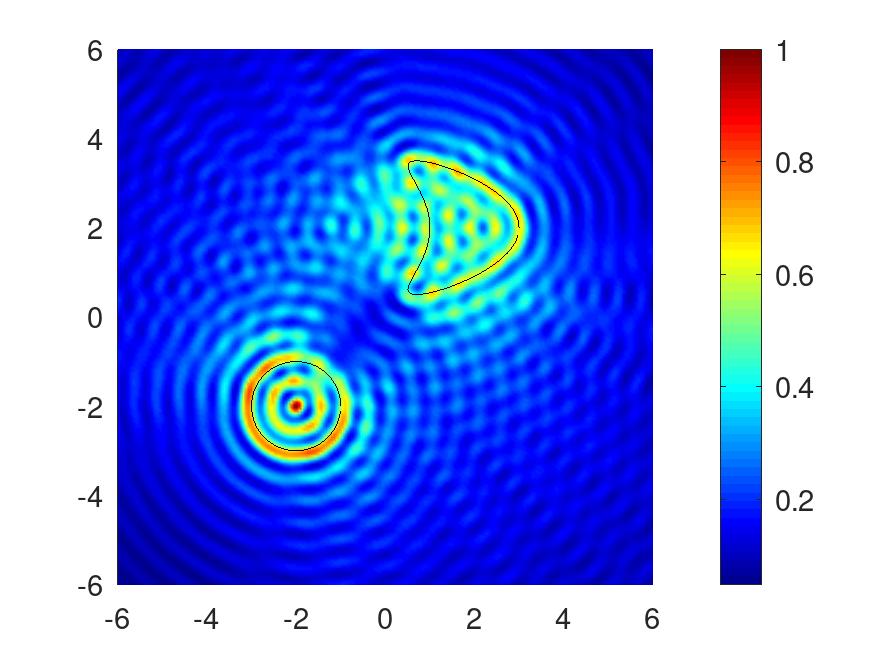}
         \caption{No noise}
         \label{fig3_3_1}
     \end{subfigure}
     \hfill
     \begin{subfigure}[b]{0.3\textwidth}
         \centering
         \includegraphics[width=\textwidth]{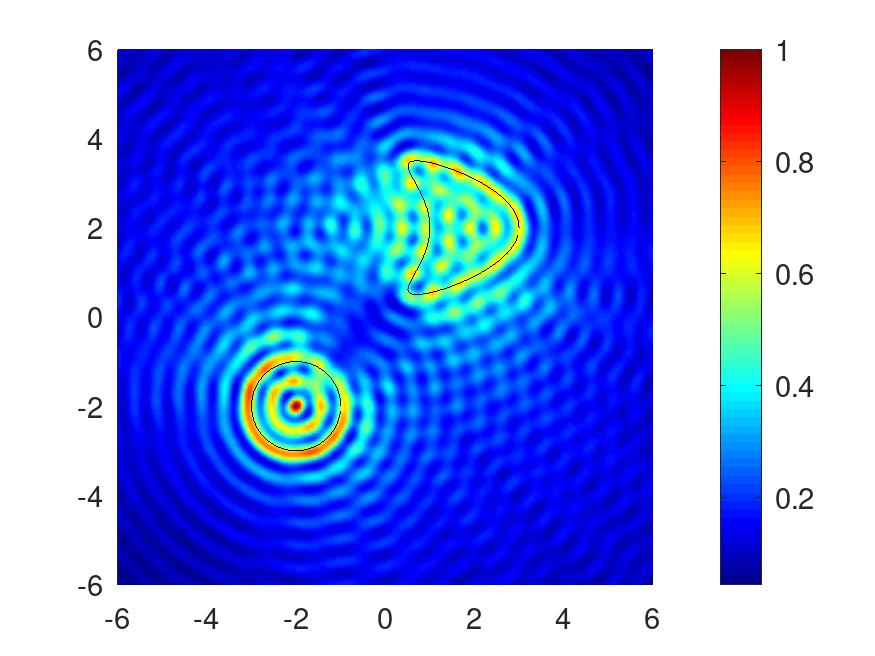}
         \caption{$5\%$ noise}
         \label{fig3_3_2}
     \end{subfigure}
     \hfill
     \begin{subfigure}[b]{0.3\textwidth}
         \centering
         \includegraphics[width=\textwidth]{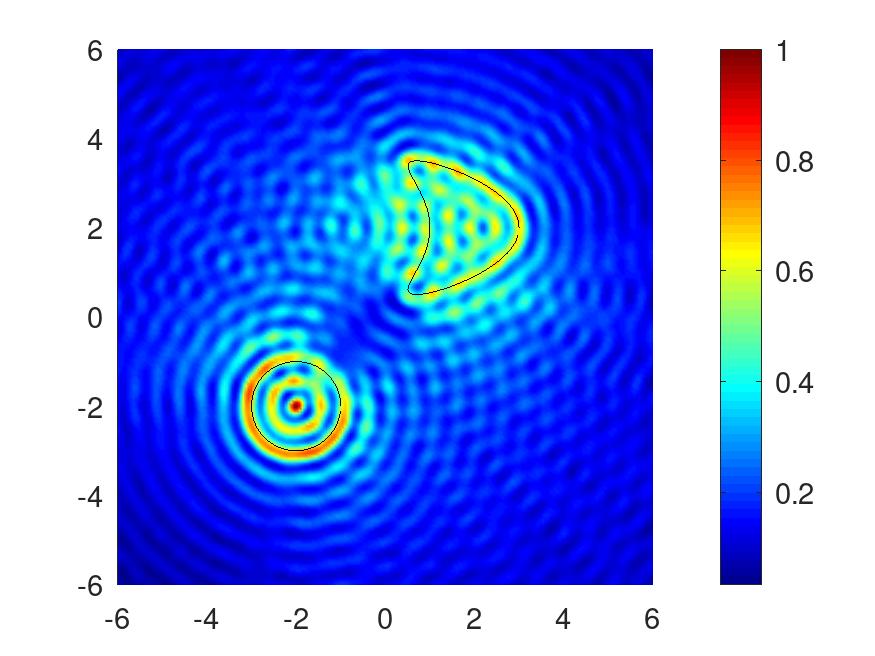}
         \caption{$10\%$ noise}
         \label{fig3_3_3}
       \end{subfigure}

            \begin{subfigure}[b]{0.3\textwidth}
         \centering
         \includegraphics[width=\textwidth]{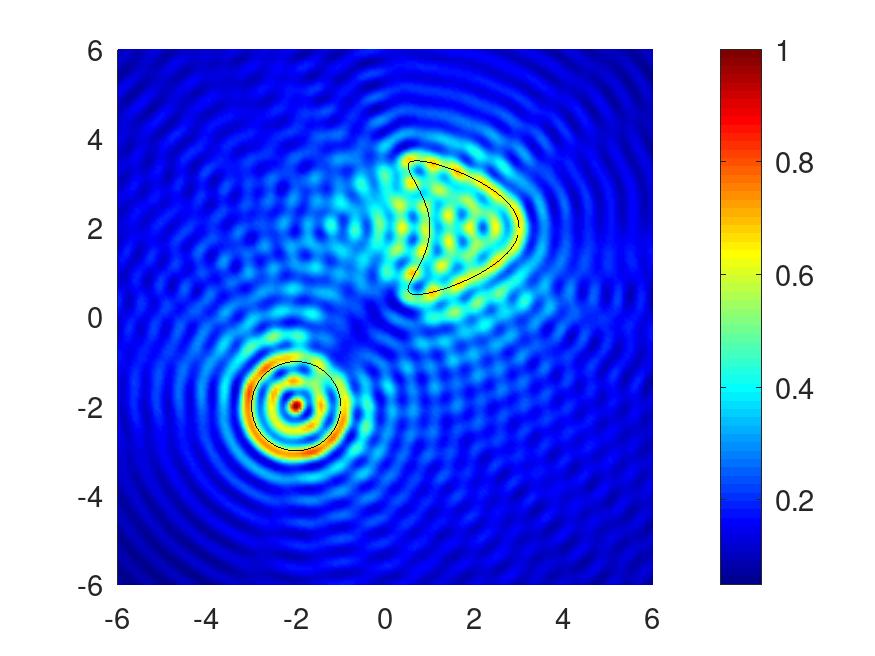}
         \caption{No noise}
         \label{fig3_4_1}
     \end{subfigure}
     \hfill
     \begin{subfigure}[b]{0.3\textwidth}
         \centering
         \includegraphics[width=\textwidth]{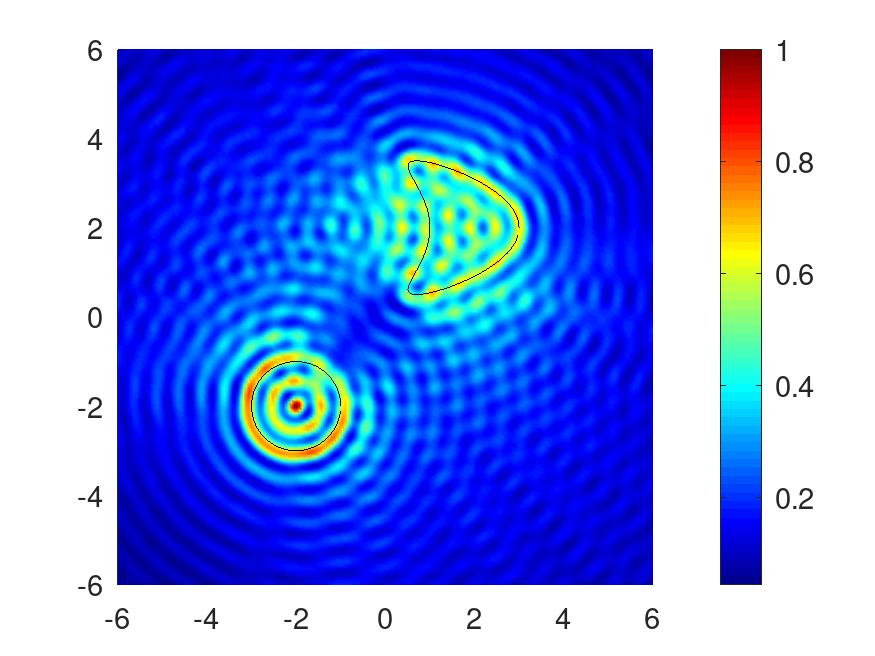}
         \caption{$5\%$ noise}
         \label{fig3_4_2}
     \end{subfigure}
     \hfill
     \begin{subfigure}[b]{0.3\textwidth}
         \centering
         \includegraphics[width=\textwidth]{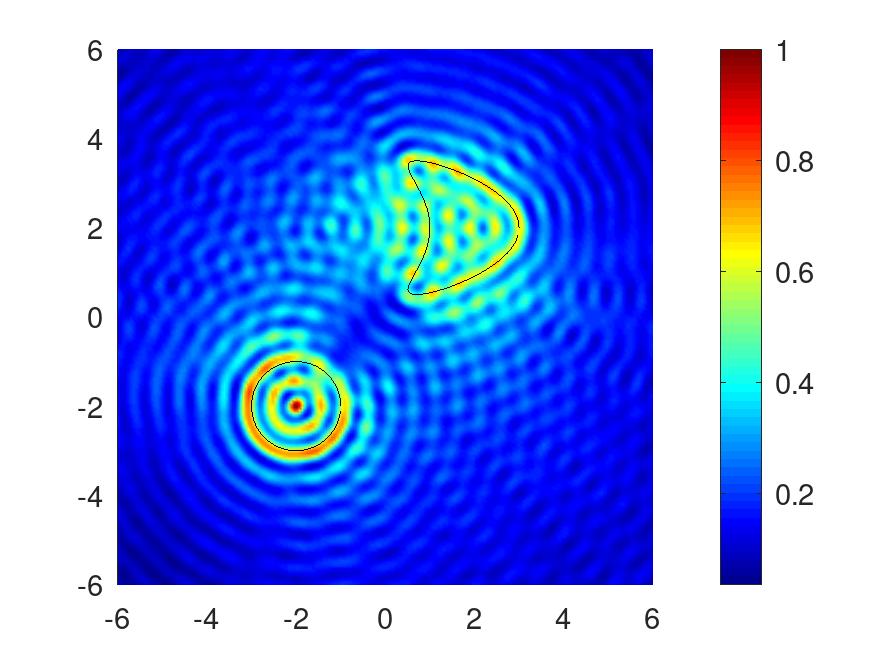}
         \caption{$10\%$ noise}
         \label{fig3_4_3}
       \end{subfigure}
     \end{figure}

     \begin{figure}[H]
                \centering
         \ContinuedFloat
            \begin{subfigure}[b]{0.3\textwidth}
         \centering
         \includegraphics[width=\textwidth]{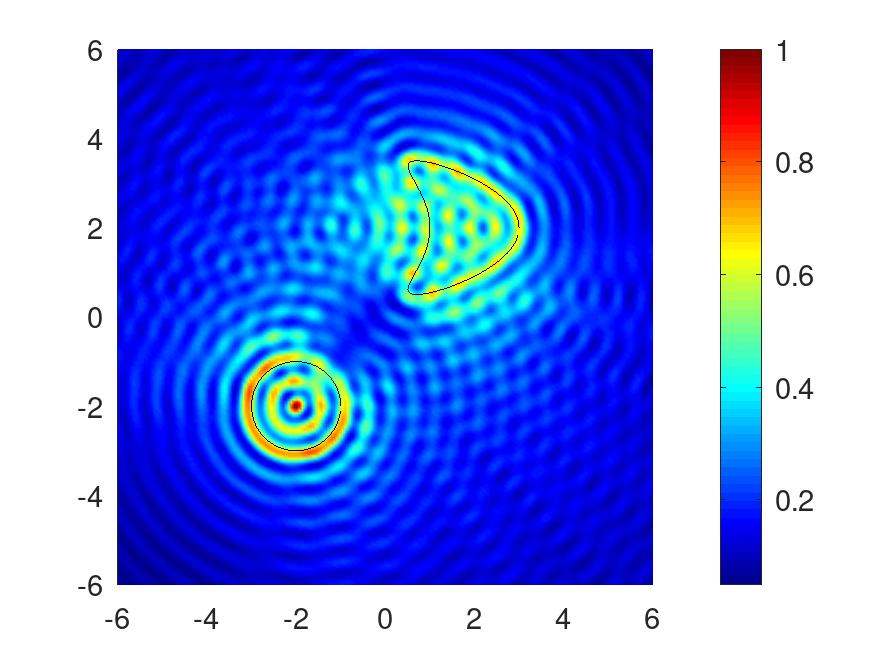}
         \caption{No noise}
         \label{fig3_5_1}
     \end{subfigure}
     \hfill
     \begin{subfigure}[b]{0.3\textwidth}
         \centering
         \includegraphics[width=\textwidth]{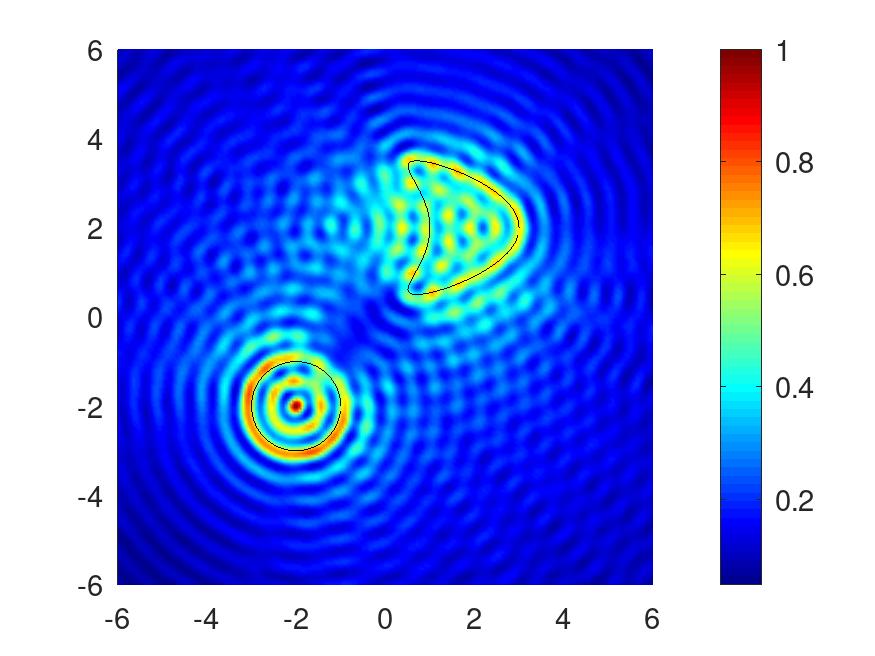}
         \caption{$5\%$ noise}
         \label{fig3_5_2}
     \end{subfigure}
     \hfill
     \begin{subfigure}[b]{0.3\textwidth}
         \centering
         \includegraphics[width=\textwidth]{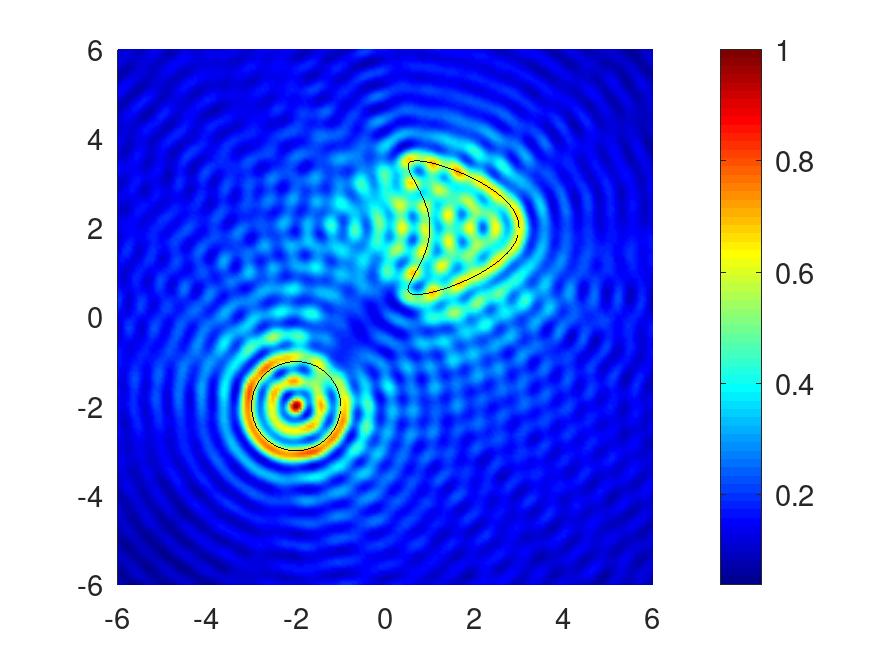}
         \caption{$10\%$ noise}
         \label{fig3_5_3}
       \end{subfigure}

            \begin{subfigure}[b]{0.3\textwidth}
         \centering
         \includegraphics[width=\textwidth]{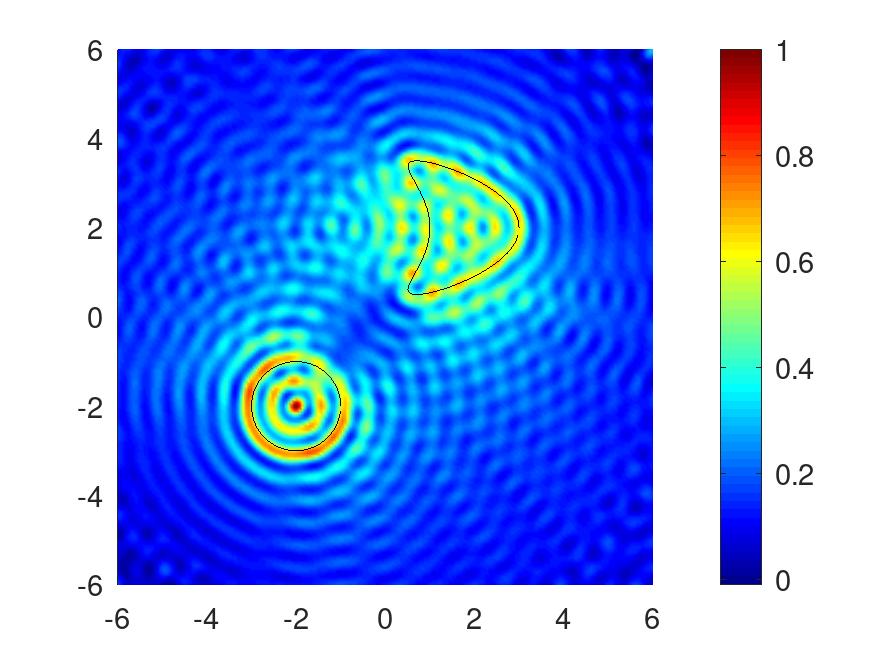}
         \caption{No noise}
         \label{fig3_6_1}
     \end{subfigure}
     \hfill
     \begin{subfigure}[b]{0.3\textwidth}
         \centering
         \includegraphics[width=\textwidth]{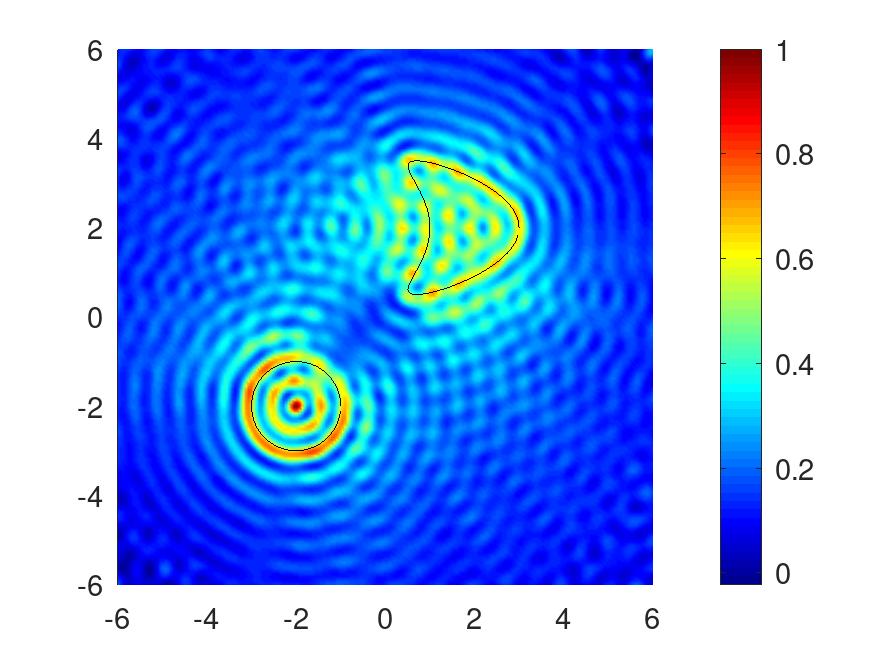}
         \caption{$5\%$ noise}
         \label{fig3_6_2}
     \end{subfigure}
     \hfill
     \begin{subfigure}[b]{0.3\textwidth}
         \centering
         \includegraphics[width=\textwidth]{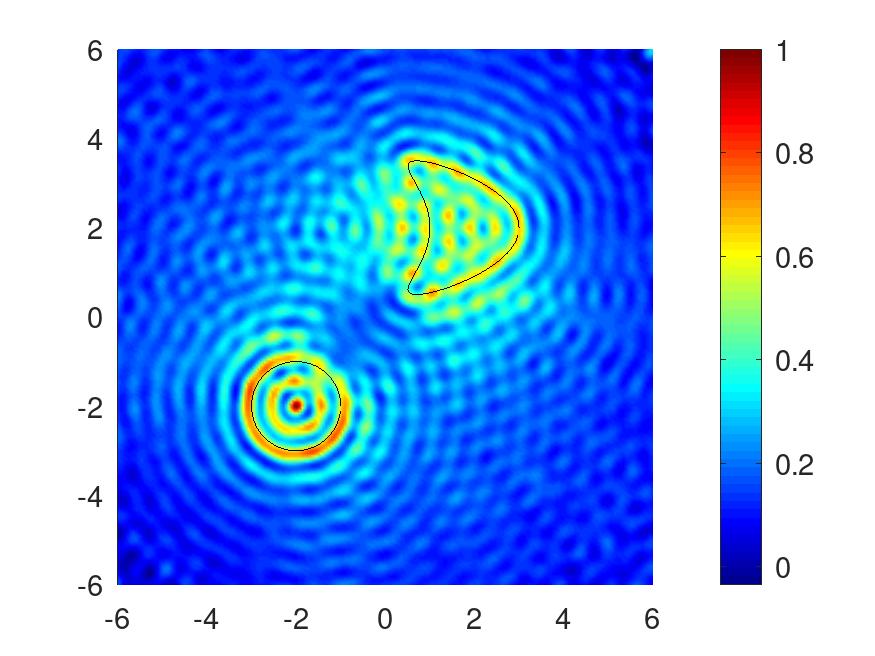}
         \caption{$10\%$ noise}
         \label{fig3_6_3}
       \end{subfigure}

            \begin{subfigure}[b]{0.3\textwidth}
         \centering
         \includegraphics[width=\textwidth]{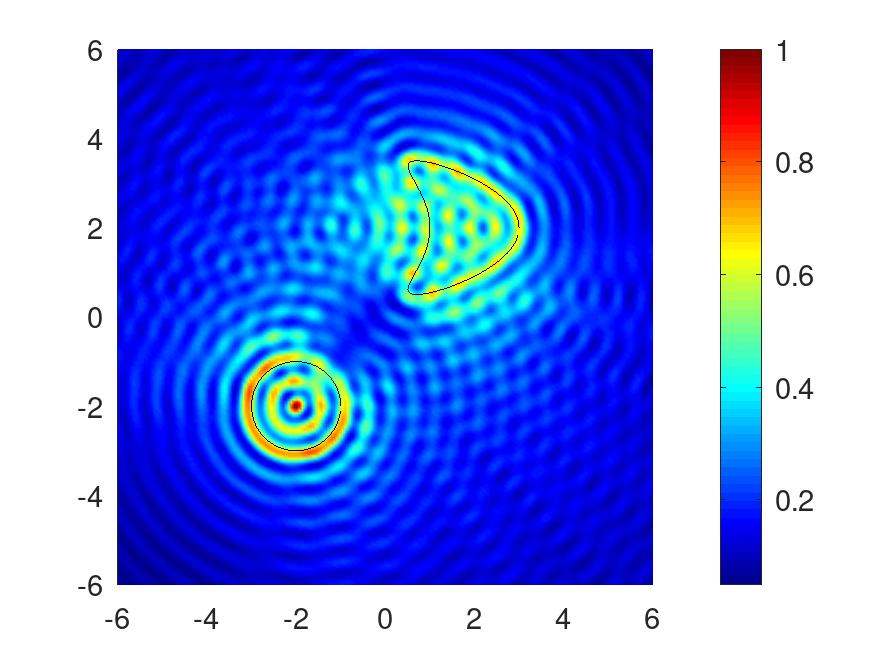}
         \caption{No noise}
         \label{fig3_7_1}
     \end{subfigure}
     \hfill
     \begin{subfigure}[b]{0.3\textwidth}
         \centering
         \includegraphics[width=\textwidth]{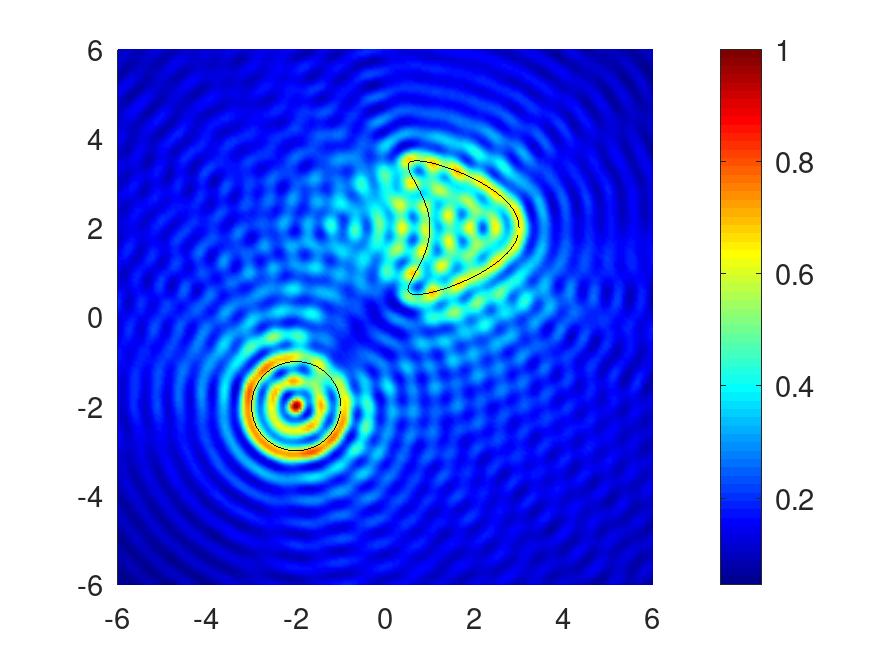}
         \caption{$5\%$ noise}
         \label{fig3_7_2}
     \end{subfigure}
     \hfill
     \begin{subfigure}[b]{0.3\textwidth}
         \centering
         \includegraphics[width=\textwidth]{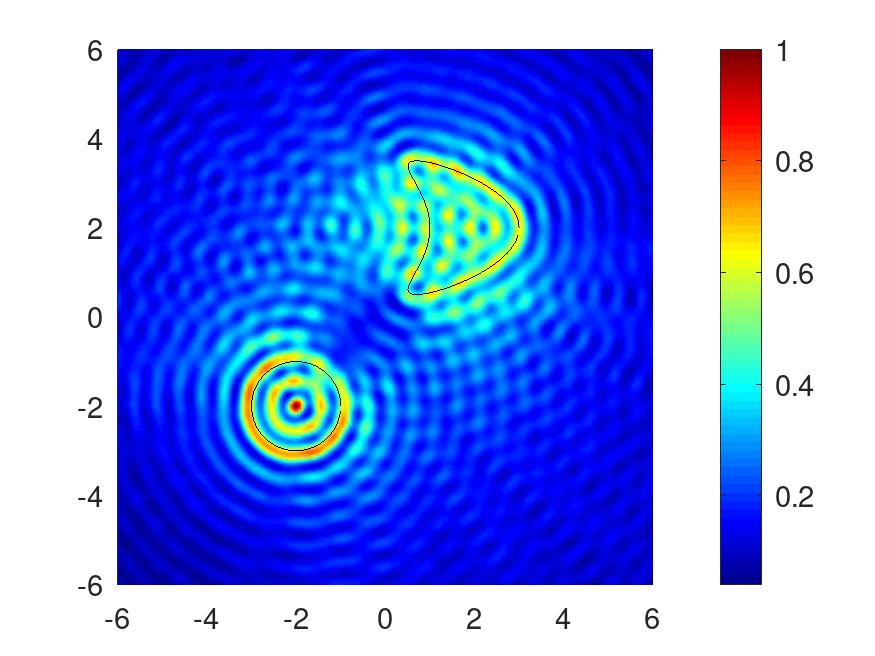}
         \caption{$10\%$ noise}
         \label{fig3_7_3}
     \end{subfigure}
     \newline
          \begin{subfigure}[b]{0.3\textwidth}
         \centering
         \includegraphics[width=\textwidth]{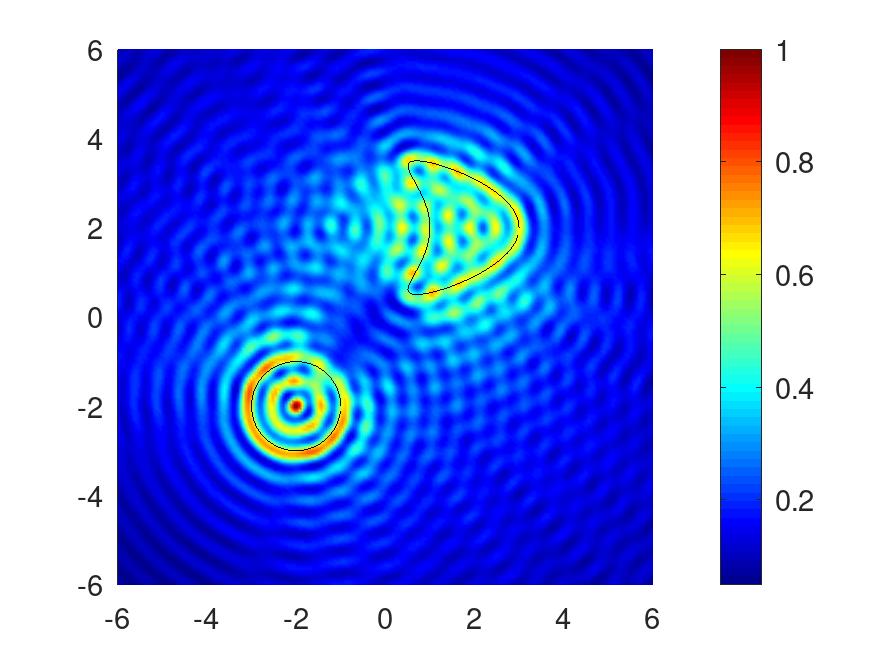}
         \caption{No noise}
         \label{fig3_8_1}
     \end{subfigure}
     \hfill
     \begin{subfigure}[b]{0.3\textwidth}
         \centering
         \includegraphics[width=\textwidth]{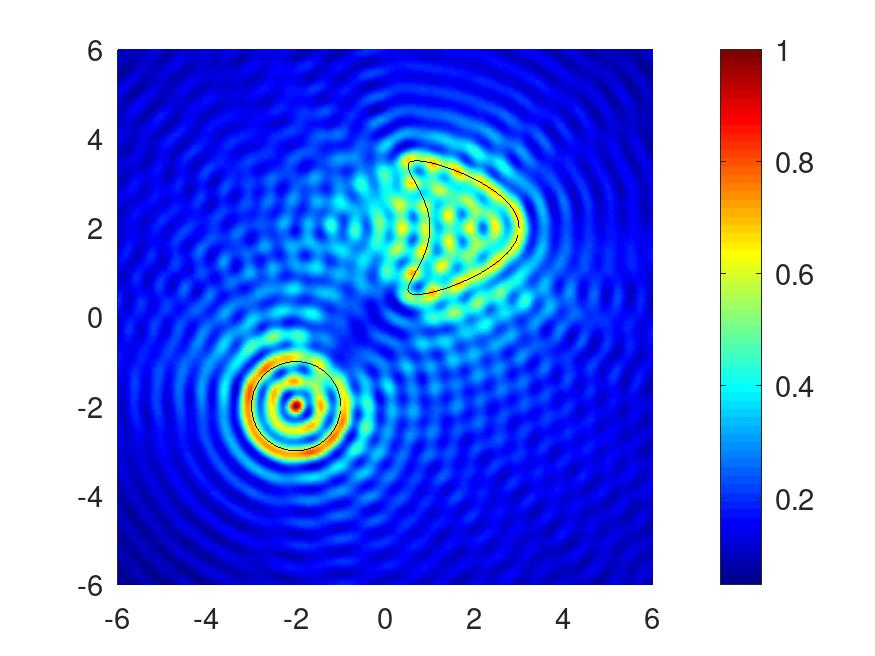}
         \caption{$5\%$ noise}
         \label{fig3_8_2}
     \end{subfigure}
     \hfill
     \begin{subfigure}[b]{0.3\textwidth}
         \centering
         \includegraphics[width=\textwidth]{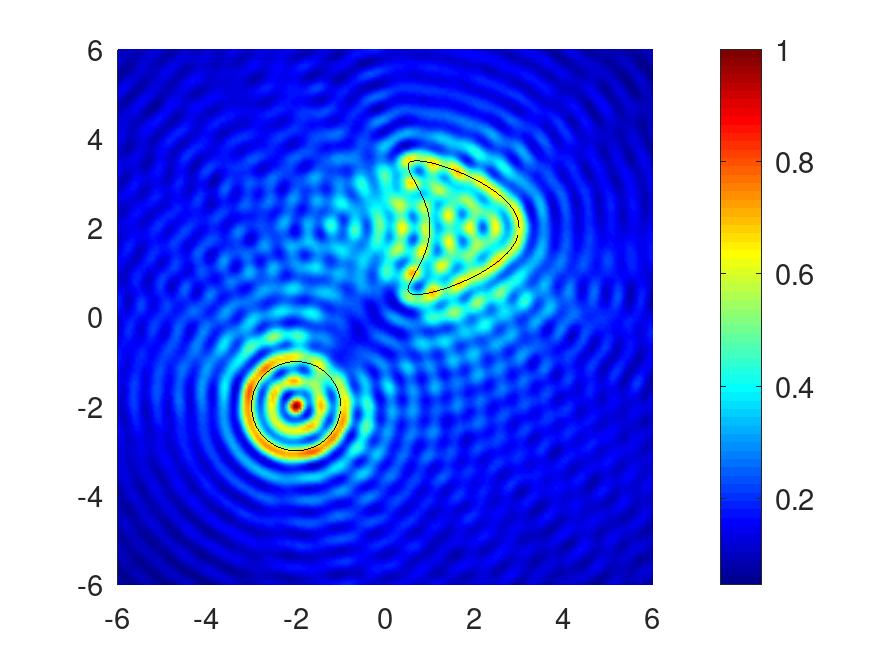}
         \caption{$10\%$ noise}
         \label{fig3_8_3}
       \end{subfigure}

           \begin{subfigure}[b]{0.3\textwidth}
        \centering
        \includegraphics[width=\textwidth]{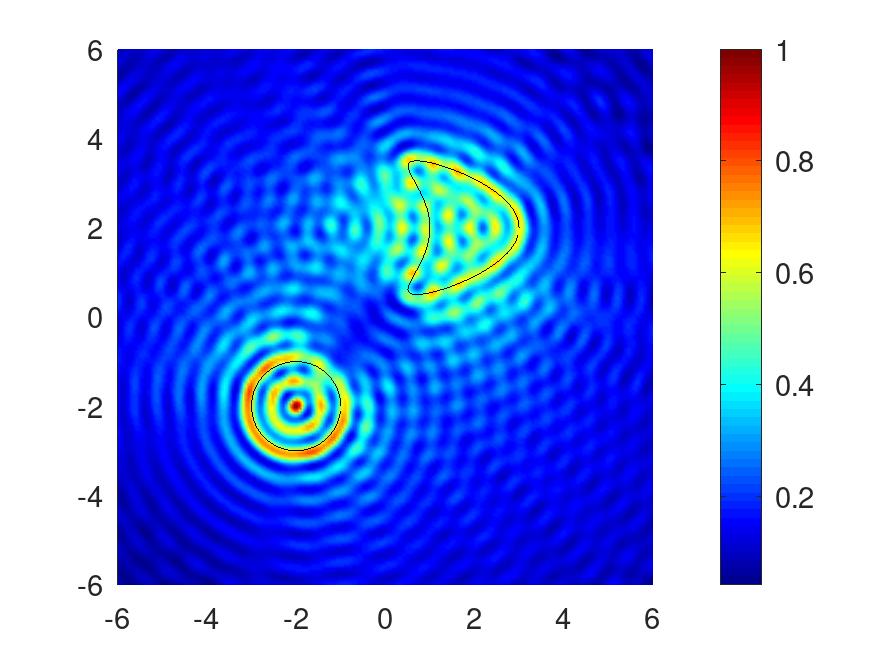}
        \caption{No noise}
         \label{fig3_9_1}
     \end{subfigure}
     \hfill
     \begin{subfigure}[b]{0.3\textwidth}
         \centering
         \includegraphics[width=\textwidth]{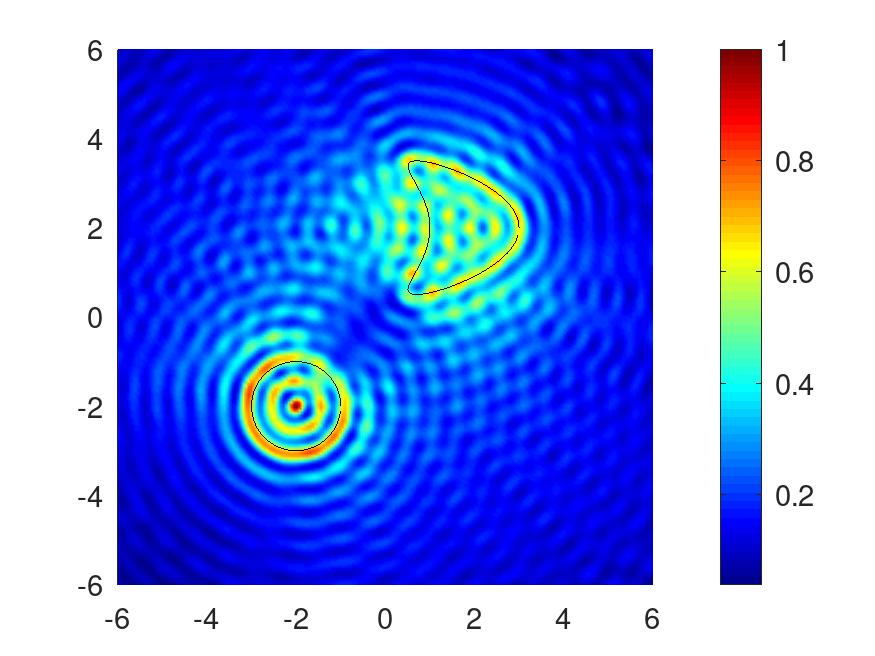}
         \caption{$5\%$ noise}
         \label{fig3_9_2}
     \end{subfigure}
     \hfill
     \begin{subfigure}[b]{0.3\textwidth}
         \centering
         \includegraphics[width=\textwidth]{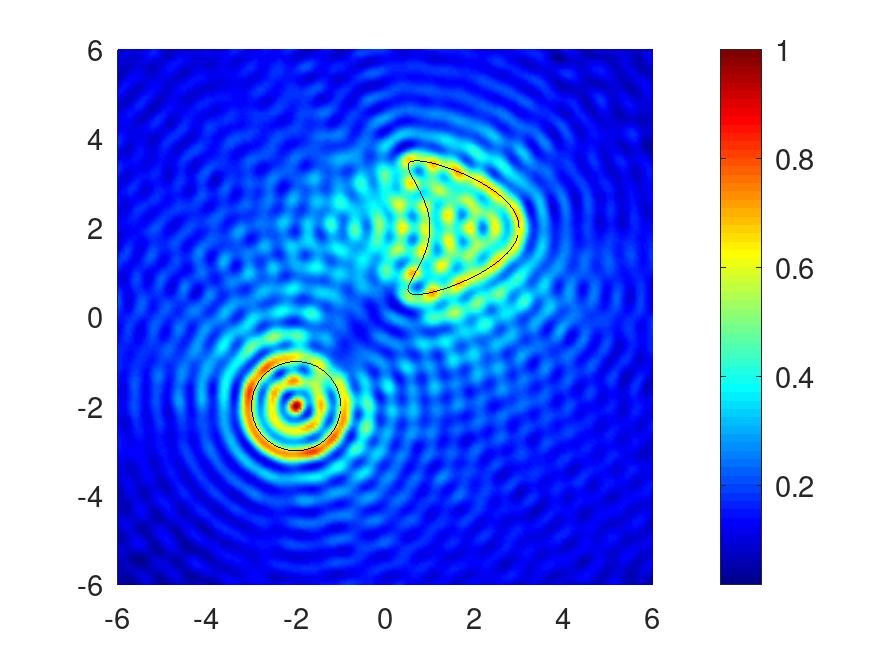}
        \caption{$10\%$ noise}
        \label{fig3_9_3}
      \end{subfigure}
     \end{figure}
     
     \begin{figure}[H]
        \centering
       \ContinuedFloat
            \begin{subfigure}[b]{0.3\textwidth}
         \centering
         \includegraphics[width=\textwidth]{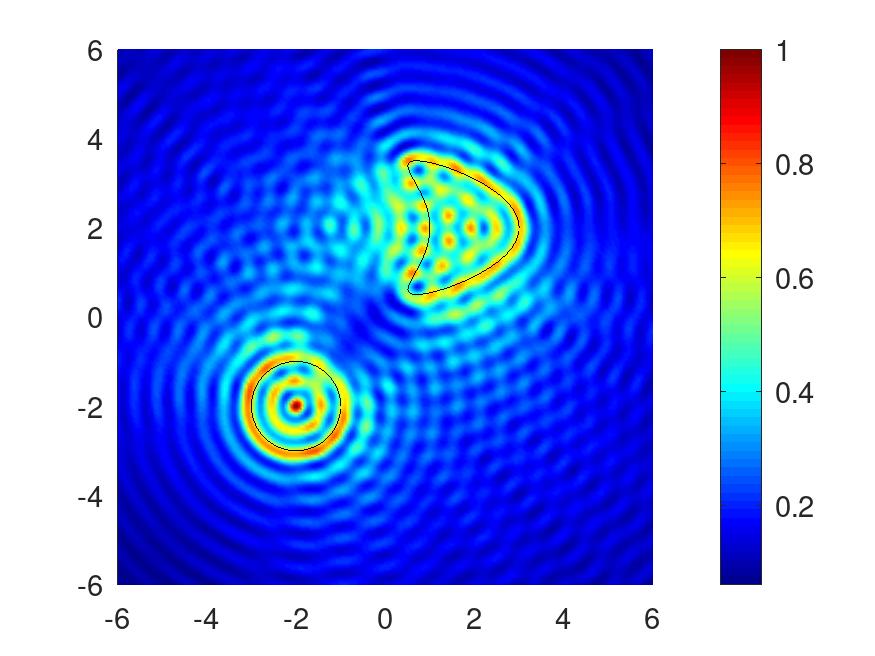}
         \caption{No noise}
         \label{fig3_10_1}
     \end{subfigure}
     \hfill
     \begin{subfigure}[b]{0.3\textwidth}
         \centering
         \includegraphics[width=\textwidth]{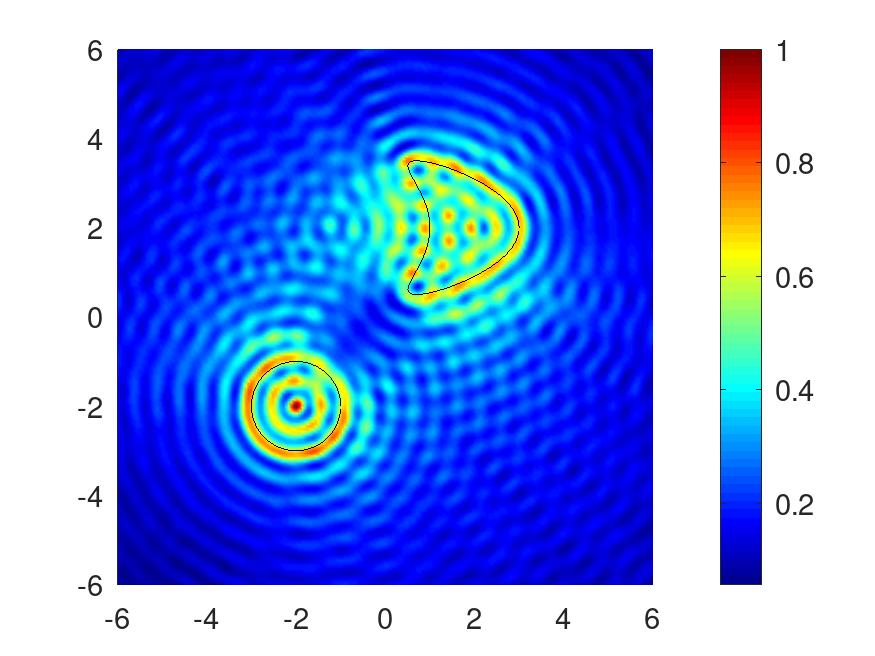}
         \caption{$5\%$ noise}
         \label{fig3_10_2}
     \end{subfigure}
     \hfill
     \begin{subfigure}[b]{0.3\textwidth}
         \centering
         \includegraphics[width=\textwidth]{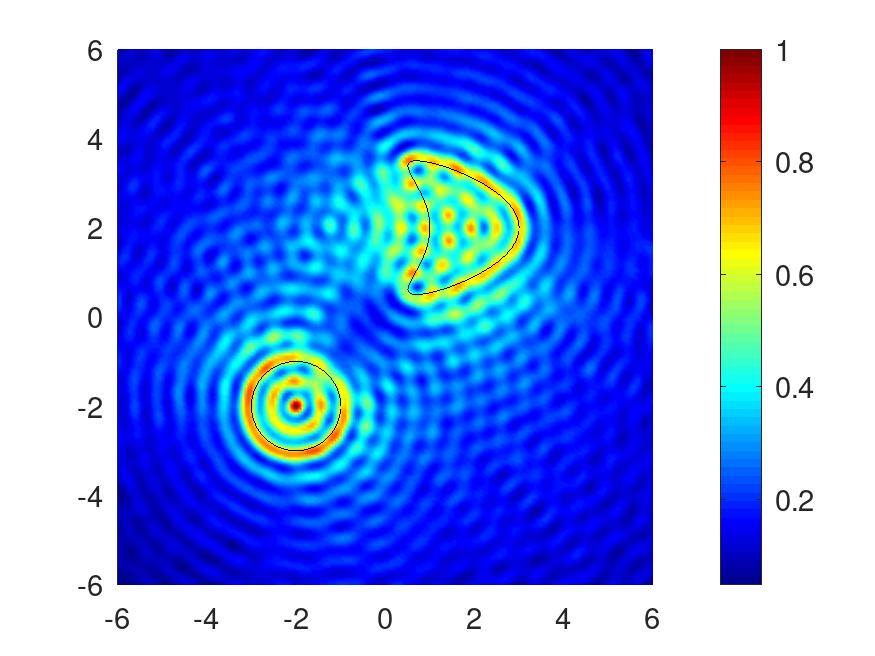}
         \caption{$10\%$ noise}
         \label{fig3_10_3}
       \end{subfigure}

           \begin{subfigure}[b]{0.3\textwidth}
        \centering
        \includegraphics[width=\textwidth]{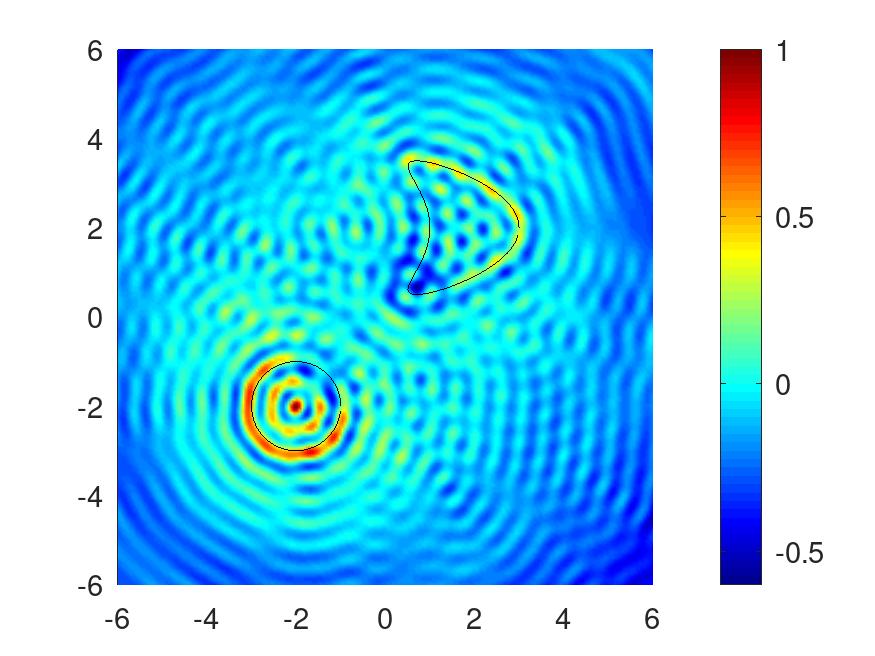}
        \caption{No noise}
        \label{fig3_11_1}
    \end{subfigure}
    \hfill
     \begin{subfigure}[b]{0.3\textwidth}
         \centering
         \includegraphics[width=\textwidth]{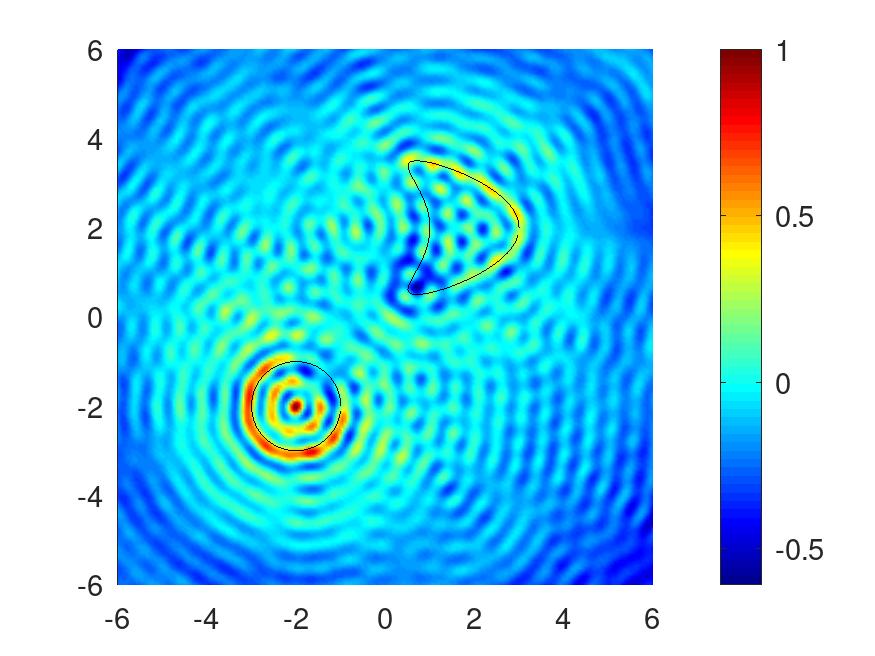}
         \caption{$5\%$ noise}
         \label{fig3_11_2}
     \end{subfigure}
     \hfill
    \begin{subfigure}[b]{0.3\textwidth}
        \centering
         \includegraphics[width=\textwidth]{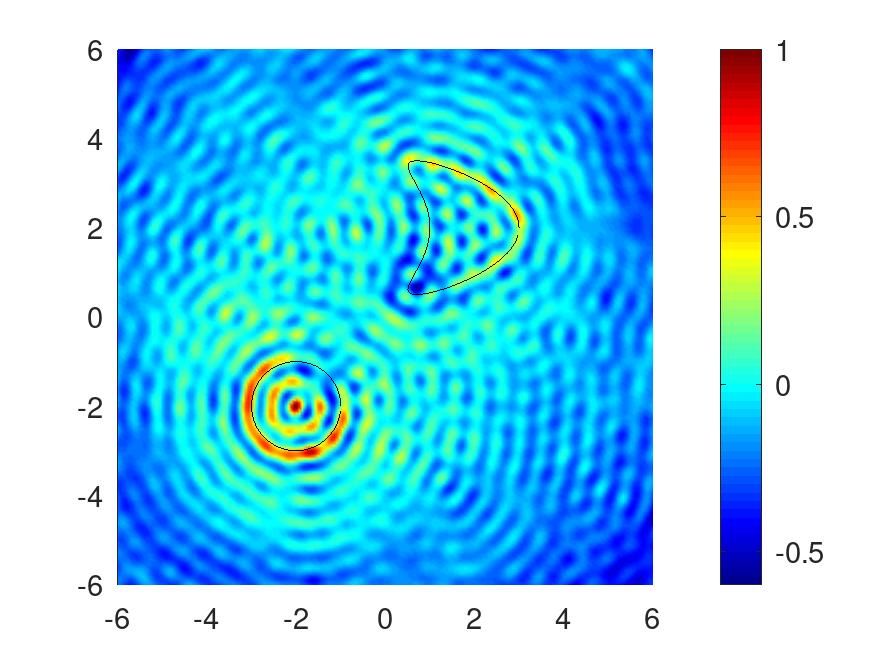}
         \caption{$10\%$ noise}
         \label{fig3_11_3}
       \end{subfigure}

               \caption{Example 3: The $j-$th row presents the reconstruction results of the indicator function $I_j$ $(j=1,\cdots,11)$.}
        \label{fig3:obstalces}
\end{figure}

\section*{Acknowledgments}

The research of TZ is supported by the Natural Science Foundation of Henan Province (No. 242300421677). The resaearch of ZG is supported by the National Natural Science Foundation of China (No. 12371393) and Natural Science Foundation of Henan Province (No. 242300421047).

\end{document}